\documentclass[10pt,a4paper]{article}
\usepackage[T1]{fontenc}
\usepackage[utf8]{inputenc}
\usepackage{amsmath}
\usepackage{amsfonts}
\usepackage{amssymb}
\usepackage{amsthm}
\usepackage{wasysym}
\usepackage{hyperref}
\usepackage{etoolbox}
\usepackage[normalem]{ulem}
\usepackage{xcolor}
\usepackage{arydshln}
\usepackage{graphicx}
\usepackage{arydshln}
\usepackage{appendix}
\usepackage{epstopdf}
\usepackage{todonotes}
\usepackage{subcaption}
\usepackage{epstopdf}
\usepackage{graphicx}
\usepackage[left=90.00pt, right=90.00pt, top=100.00pt, bottom=100.00pt]{geometry}

\newtheorem{assumption}{Assumption}
\newtheorem{theorem}{Theorem}
\newtheorem{lemma}{Lemma}

\theoremstyle{definition}
\newtheorem{example}{Example}
\newtheorem{definition}{Definition}
\newtheorem{property}{Property}
\newtheorem{proposition}{Proposition}

\theoremstyle{remark}
\newtheorem{remark}{Remark}
\DeclareMathOperator{\rank}{rank}
\DeclareMathOperator{\dist}{dist}
\DeclareMathOperator{\modolo}{mod}
\DeclareMathOperator{\argmin}{argmin}
\newcommand{\minus}{\scalebox{0.75}[1.0]{\( - \)}}
\newcommand{\plus}{\scalebox{0.75}[0.75]{\( + \)}}

\title{A pseudo-spectra based characterisation of the robust strong H-infinity norm of time-delay systems with real-valued and structured uncertainties}
\date{}
\author{Pieter Appeltans and Wim Michiels}
	
\begin{document}
	\maketitle
	\begin{abstract}

This paper examines the robust (strong) H-infinity norm of a linear time-invariant system with discrete delays. The considered system is subject to real-valued, structured, Frobenius norm bounded uncertainties on the coefficient matrices. The robust H-infinity norm is the worst case value of the H-infinity norm over the realisations of the system and hence an important measure of robust performance in control engineering. However this robust H-infinity norm is a fragile measure, as for a particular realization of the uncertainties the H-infinity norm might be sensitive to arbitrarily small perturbations on the delays. Therefore, we introduce the robust strong H-infinity norm, inspired by the notion of strong stability of delay differential equations of neutral type, which takes into account both the perturbations on the system matrices and infinitesimal small delay perturbations. This quantity is a continuous function of the nominal system parameters and delays. The main contribution of this work is the introduction of a relation between this robust strong H-infinity norm and the the pseudo-spectrum of an associated singular delay eigenvalue problem. This relation is subsequently employed in a novel algorithm for computing the robust strong H-infinity norm of uncertain time-delay systems. Both the theoretical results and the algorithm are also generalized to systems with uncertainties on the delays, and systems described by a class of delay differential algebraic equations. 
	\end{abstract}

	\section{Introduction}
	\label{sec:introduction}
	
	In this work we will focus on  linear, time-invariant systems with discrete delays: 
	\begin{equation}
	\label{eq:nominal_system_description}
	\left\{\begin{array}{rcl}
	\dot{x}(t) & = & A_0~x(t) + \sum_{k=1}^{K} A_k~x(t-\tau_k) + B_0~w(t) + \sum_{k=1}^{K} B_{k}~w(t-\tau_k)  \\ [0.7em]
	z(t) & = & C_0~x(t) + \sum_{k=1}^{K}C_k~x(t-\tau_k) + D_0~w(t) + \sum_{k=1}^{K} D_{k}~w(t-\tau_k)
	\end{array}\text{.}\right. 
	\end{equation} 
	with $x \in \mathbb{R}^{n}$ the state, $w\in\mathbb{R}^{m}$ the exogenous input and $z\in\mathbb{R}^{p}$ the exogenous output; $A_k$, $B_k$, $C_k$ and $D_k$ for $k=0,\dots,K$ real-valued system matrices of appropriate dimension; and $\vec{\tau} = (\tau_1,\tau_2,\dots,\tau_K) \in (\mathbb{R}^{+})^{K}$ discrete delays. The transfer function associated with this system equals:	
	\begin{equation*}
	%\label{eq:nominal_transfer_function}
	\begin{array}{lc}
	T(s;\vec{\tau}) = & \left(C_0+\sum\limits_{k=1}^{K}C_k e^{-s\tau_k}\right)\left(sI-A_0-\sum\limits_{k=1}^{K}A_{k}e^{-s\tau_k}\right)^{-1}\!\left(B_0+\sum\limits_{k=1}^{K} B_k e^{-s \tau_k}\right) \\ &+ D_0+\sum\limits_{k=1}^{K}D_{k}e^{-s \tau_k} \text{.}
	\end{array}
	\end{equation*}
	
	The $\mathcal{H}_{\infty}$-norm is an important performance measure of dynamical systems as it quantifies the disturbance rejection of the system. It is frequently used in the robust control framework \cite{Zhou1998}. For system \eqref{eq:nominal_system_description}, if exponentially stable, the $\mathcal{H}_{\infty}$-norm is equal to the supremum of the frequency response (ie. the transfer function evaluated at the imaginary axis) measured in spectral norm:
	\begin{equation}
	\label{eq:definition_hinf}
	\|T(\cdot;\vec{\tau})\|_{\mathcal{H}_\infty} = \sup_{\omega \in \mathbb{R}} \sigma_1\left(T(\jmath\omega;\vec{\tau})\right)
	\end{equation}
	with $\sigma_1(\cdot)$ the largest singular value \cite{Zhou1998}.
	This quantity continuously depends on the elements of the matrices $A_k$, $B_k$, $C_k$ and $D_k$ at values for which the system is exponentially stable. However the function $\vec{\tau}\mapsto \|T(\cdot;\vec{\tau})\|_{\mathcal{H}_{\infty}}$ might be discontinuous, even if the system remains stable. This is caused by the potential sensitivity of the asymptotic frequency response, defined as
	\begin{equation}
		T_a(\jmath \omega;\vec{\tau}) := D_0 + \sum\limits_{k=1}^{K} D_k e^{-\jmath \omega \tau_k} \text{,}
	\end{equation}
	 with respect to infinitesimal small delay changes \cite{Gumussoy2011}.

	\begin{remark}
	 The name asymptotic frequency response stems from the following property. 
	 \end{remark}
	 	 \begin{property}[{\cite[Proposition 3.3]{Gumussoy2011}}] It holds that, \[\lim_{\bar{\omega}\to \infty}\max\left\{\sigma_1\Big(T(\jmath\omega)-T_{a}(\jmath\omega)\Big) : \omega \geq \bar{\omega}\right\} = 0\] and \[\limsup_{\omega \to \infty} \sigma_1\big(T(\jmath\omega;\vec{\tau})\big) = \sup_{\omega\in \mathbb{R}} \sigma_1\big(T_a(\jmath\omega;\vec{\tau})\big) = \|T_a(\cdot;\vec{\tau})\|_{\mathcal{H}_{\infty}}\text{.}\]
	 	 \end{property}
	 
	To eliminate this potential discontinuity with respect to the delays, one often works with the strong $\mathcal{H}_{\infty}$-norm instead:
	\[
	|||T(\cdot;\vec{\tau})|||_{\mathcal{H}_{\infty}} := \limsup_{\gamma \to 0+} \left\{\|T(\cdot;\vec{\tau}_{\gamma})\|_{\mathcal{H}_{\infty}} : \vec{\tau}_{\gamma} \in \mathcal{B}(\vec{\tau},\gamma) \cap (\mathbb{R}^{+})^{K}\right\}\text{,}
	\]
	with $\mathcal{B}(\vec{\tau},\gamma)$ a ball with radius $\gamma$ in $\mathbb{R}^{K}$ centred at $\vec{\tau}$  \cite{Gumussoy2011}. The strong $\mathcal{H}_{\infty}$-norm of the asymptotic frequency response, $|||T_a(\cdot;\vec{\tau})|||_{\mathcal{H}_{\infty}}$, is defined analogously. The strong $\mathcal{H}_{\infty}$-norm of system \eqref{eq:nominal_system_description} has the following properties:
	\begin{property}[{\cite[Theorem 4.5]{Gumussoy2011}}]
		The strong $\mathcal{H}_{\infty}$-norm is continuous as a function of the elements of the system matrices and the delays at values for which the system is exponentially stable.
	\end{property}
	\begin{property}[{\cite[Theorem 4.5]{Gumussoy2011}}] The strong $\mathcal{H}_{\infty}$-norm of system \eqref{eq:nominal_system_description} satisfies
		\label{prop:expression_strong_h_infnorm}
		\[
		|||T(\cdot;\vec{\tau})|||_{\mathcal{H}_{\infty}} = \max\big\{\|T(\cdot;\vec{\tau})\|_{\mathcal{H}_{\infty}},|||T_a(\cdot;\vec{\tau})|||_{\mathcal{H}_{\infty}}\big\}
		\]
		and
		\begin{equation}
		\label{eq:strong_H_infnorm_asymptotic}
		|||T_a(\cdot;\vec{\tau})|||_{\mathcal{H}_{\infty}} = \max_{\vec{\theta} \in \left[0,2\pi\right)^{K}} \sigma_1\left(D_0 + \sum_{k=1}^{K} D_k e^{\jmath \theta_k}\right)\text{.}
		\end{equation} 
	\end{property}
	
	\begin{property}[{\cite[Proposition 4.3]{Gumussoy2011}}]
		\label{prop:strong_hinf_norm_rationally_independent}
		It holds that,
		\[
		|||T_a(\cdot;\vec{\tau})|||_{\mathcal{H}_{\infty}} \geq \|T_a(\cdot;\vec{\tau})\|_{\mathcal{H}_{\infty}} \text{.}
		\]
		Furthermore, if the delays are rationally independent, then 
		\[
		|||T_a(\cdot;\vec{\tau})|||_{\mathcal{H}_{\infty}} = \|T_a(\cdot;\vec{\tau})\|_{\mathcal{H}_{\infty}}\quad \text{and as a consequence} \quad |||T(\cdot;\vec{\tau})|||_{\mathcal{H}_{\infty}} = \|T(\cdot;\vec{\tau})\|_{\mathcal{H}_{\infty}}\text{.}
		\]
	\end{property}
	The following example illustrates the potential sensitivity of the $\mathcal{H}_{\infty}$-norm with respect to infinitesimal delay changes in greater detail.
	\begin{example} 
		\label{example:strong_hinfnorm}
		Let us consider the following system
		\begin{equation}
		\label{eq:example0_system_description}
		\left\{ 
		\begin{array}{rcllllll}
		\dot{x}(t) & = & -2x(t) & +x(t-\tau_1) &-w(t) & - 0.5 w(t-\tau_2)&\\ [2ex]
		z(t) & = & -2 x(t) & +  x(t-\tau_2)&+ 5w(t) &+ 1.5 w(t-\tau_1)&-3w(t-\tau_2)
		\end{array}
		\right.
		\end{equation}
		whose corresponding transfer function and asymptotic frequency response function are respectively equal to 
		\[
		\begin{aligned}
		T(s;(\tau_1,\tau_2)) &= \frac{(-2+e^{-s\tau_2})(-1-0.5e^{-s\tau_2})}{s+2-e^{-s\tau_1}} + 5 + 1.5 e^{-s\tau_1}-3e^{-s\tau_2} \text{~and} \\
		T_a(\jmath \omega;(\tau_1,\tau_2)) &= 5 + 1.5 e^{-\jmath\omega\tau_1}-3e^{-\jmath\omega\tau_2} \text{.}
		\end{aligned}
		\]
		Figure \ref{fig:T_strong_hinfnorm} plots the magnitude of the frequency response and the asymptotic frequency response in function of $\omega$ for $\tau_1$ and $\tau_2$ respectively equal to $1$ and $2$. For these values of delays, the magnitude of the frequency response attains a maximum of approximately $8.4907$. Figure \ref{fig:T_strong_hinfnorm_2} shows the effect of a small change in delay parameters ($\tau_1 =1$ and $\tau_2= 2+\pi/100$). Now the $\mathcal{H}_{\infty}$-norm is equal to approximately $9.5$. Furthermore, from Properties~\ref{prop:expression_strong_h_infnorm} and \ref{prop:strong_hinf_norm_rationally_independent} it follows that 
		\[
	\begin{aligned}		
		\|T((1,2+\pi/n))\|_{\mathcal{H}_{\infty}} &= |||T((1,2+\pi/n))|||_{\mathcal{H}_{\infty}} \\ & \geq |||T_a((1,2+\pi/n))|||_{\mathcal{H}_{\infty}} \\ &= \max_{\vec{\theta}\in[0,2\pi)^{2}} \sigma_1\left(5+1.5e^{\jmath\theta_1}-3e^{\jmath\theta_2} \right)\\ &= 9.5
	\end{aligned}
	\] 
		for every $n\in \mathbb{N}$. One can thus choose delays arbitrarily close to $(1,2)$ for which the $\mathcal{H}_{\infty}$-norm jumps to at least $9.5$. From the figures it is also clear that this discontinuity is due to the asymptotic frequency response.
		\begin{figure}[!htbp]
			\centering
			\begin{subfigure}{0.47\linewidth}
				\centering
				\includegraphics[width=\linewidth]{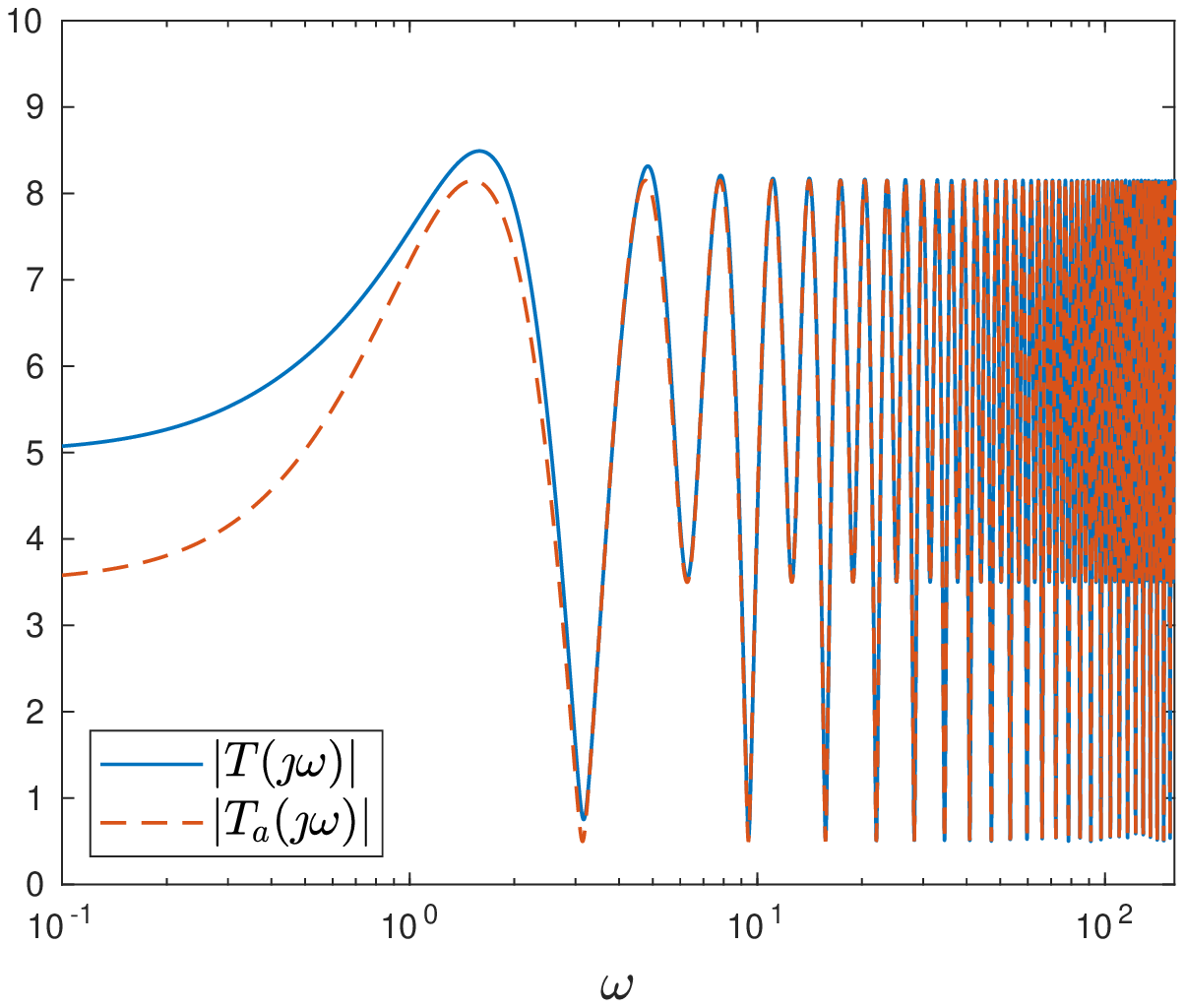}
				\caption{}
				\label{fig:T_strong_hinfnorm}
			\end{subfigure}
			\begin{subfigure}{0.47\linewidth}
				\centering
				\includegraphics[width=\linewidth]{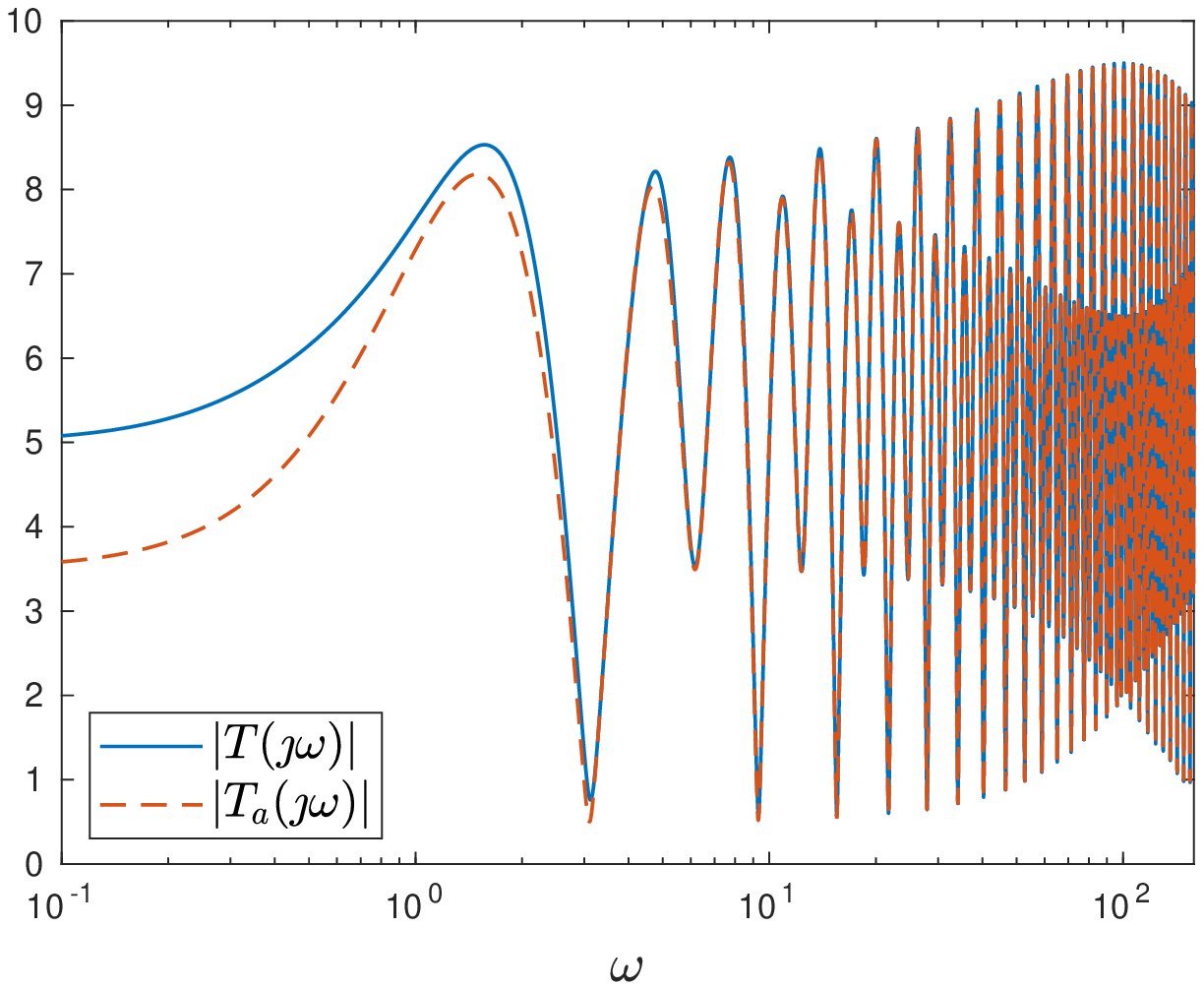}
				\caption{}
				\label{fig:T_strong_hinfnorm_2}
			\end{subfigure}
			\caption{The magnitude of the frequency response (blue) and asymptotic frequency response (red) of system \eqref{eq:example0_system_description} in function of $\omega$ for $(\tau_1,\tau_2)$ equal to $(1,2)$ (left) and $(1,2+\pi/100)$ (right).}
		\end{figure}

	\end{example}
	
	% Uncertainty + Robustness
	By considering the strong $\mathcal{H}_{\infty}$-norm we remove a \emph{fragility} problem of the {$\mathcal{H}_{\infty}$-norm}, namely being potentially sensitive to infinitesimal perturbations on the delays. Note that the strong \mbox{$\mathcal{H}_{\infty}$-norm} is still a property of the nominal model (and infinitesimal perturbations of the delays). However, in almost all control design applications the mathematical model does not completely match the dynamical system it describes, due to unmodelled (non-linear) behaviour, model reductions, imprecise measurements or uncertain parameters. To take these deviations into account during the design process one often works with a family of models instead  \cite{Zhou1998,Hinrichsen2005}. In this paper, we construct such a family by considering \eqref{eq:nominal_system_description} as nominal model to which uncertainties are added. In the main part of this work we will consider real-valued (as the model of the system itself is real-valued), norm-bounded (reflecting the distance between model and reality) and structured (only a certain parameter or group of parameters is affected) uncertainties on the coefficient matrices. However in Section~\ref{sec:generalisation} also uncertainties on the delays will be examined. The state-space representation associated with the considered uncertain system is equal to:
	\begin{equation}
	\label{eq:system_description}
	\left\{\begin{array}{lr}
	\dot{x}(t)=&\tilde{A}_0(\delta)~x(t) + \sum\limits_{k=1}^{K}\tilde{A}_k(\delta)~x(t-\tau_k) + \tilde{B}_0(\delta)~w(t) + \sum\limits_{k=1}^{K} \tilde{B}_{k}(\delta)~w(t-\tau_k)  \\ [1em]
	z(t)=&\!\!\tilde{C}_0(\delta)~x(t) + \sum\limits_{k=1}^{K}\tilde{C}_k(\delta)~x(t-\tau_k) + \tilde{D}_0(\delta)~w(t) + \sum\limits_{k=1}^{K} \tilde{D}_{k}(\delta)~w(t-\tau_k)
	\end{array}\right.
	\end{equation} 
	where the uncertainties $\delta$ are confined to a specified set $\hat{\delta}$. In this formulation $x \in \mathbb{R}^{n}$ is the state vector, $w\in\mathbb{R}^{m}$ the exogenous input, $z\in\mathbb{R}^{p}$ the exogenous output, $\vec{\tau} = (\tau_1,\tau_2,\dots,\tau_K) \in (\mathbb{R}^{+})^{K}$ discrete delays, $\delta$ the combination of all uncertainties: $\delta = \left(\delta_1,\dots,\delta_L\right)$, $\hat{\delta}$ the set of admissible uncertainties: \[\hat{\delta} = \{\delta \in \mathbb{R}^{q_1\times r_1}\times\dots\times \mathbb{R}^{q_L\times r_L}: \|\delta_l\|_F\leq \bar{\delta}_l \text{ for } l=1,\dots,L\}\text{,}\] $\tilde{A}_k(\delta)$, $\tilde{B}_k(\delta)$, $\tilde{C}_k(\delta)$, and $\tilde{D}_k(\delta)$ uncertain system matrices of appropriate dimension with $\tilde{A}_k(\delta) = A_k + \sum_{l=1}^{L}\sum_{s=1}^{S_l^{A_k}} G_{l,s}^{A_k} \delta_l H_{l,s}^{A_k}$ where $G_{l,s}^{A_k}$ and $H_{l,s}^{A_k}$ are real-valued shape matrices of appropriate dimension and $\tilde{B}_k(\delta)$, $\tilde{C}_k(\delta)$ and $\tilde{D}_k(\delta)$ defined analogously. Note that this definition allows a single uncertainty to affect multiple blocks in the same system matrix and even multiple system matrices as the same uncertain parameter may be present at multiple locations. In the remainder of this work we make the following assumption for this uncertain system:
	\begin{assumption}
		\label{asm:robust_internally_stable}
		 System \eqref{eq:system_description} is internally exponentially stable for all admissible uncertainties, ie. the characteristic roots of $\lambda I- \tilde{A}_0(\delta) -\tilde{A}_k(\delta) e^{-\lambda \tau_k}$ lie in the open left half plane for all $\delta \in \hat{\delta}$. 
	\end{assumption}
	
	For such uncertain systems it is often desirable to quantify the worst behaviour over all possible realisations. In the context of the $\mathcal{H}_{\infty}$-norm this led to the notion of the robust $\mathcal{H}_{\infty}$-norm which is defined as the maximal $\mathcal{H}_{\infty}$-norm over all realisations, ie.
	\[	
		\|T(\cdot;\cdot,\vec{\tau})\|^{\hat{\delta}}_{\mathcal{H}_{\infty}} := \max_{\delta\in\hat{\delta}} \| T(\cdot;\delta,\vec{\tau})\|_{\mathcal{H}_{\infty}} 
	\]
	where $T(s;\delta,\vec{\tau})$ is the transfer functions associated with a given realisation of \eqref{eq:system_description}:	
	\begin{equation}
	\label{eq:transferfunction}
	\begin{array}{lc}
	T(s;\delta,\vec{\tau}) = & \!\left(\!\tilde{C}_0(\delta)\plus\sum\limits_{k=1}^{K}\tilde{C}_k(\delta) e^{\minus\tau_k s}\right)\!\left(\!Is\minus\tilde{A}_0(\delta)\minus\sum\limits_{k=1}^{K}\tilde{A}_k(\delta) e^{\minus\tau_k s}\!\right)^{\minus 1}\!\!\left(\!\tilde{B}_0(\delta) \plus\sum\limits_{k=1}^{K}\tilde{B}_k(\delta) e^{\minus \tau_k s}\!\right) \\ [0.7em] &+ \tilde{D}_0(\delta) \plus \sum\limits_{k=1}^{K} \tilde{D}_k e^{\minus\tau_k s}\text{.}
	\end{array} 
	\end{equation}
	The robust $\mathcal{H}_{\infty}$-norm can also be interpreted as the supremum of the following (worst-case gain) function
	\begin{equation}
	\label{eq:worst_case_transfer_function}
	\mathbb{R} \ni \omega  \mapsto \max_{\delta \in  \hat{\delta}} \sigma_{1}\Big(T(\jmath \omega; \delta,\vec{\tau})\Big) \in \mathbb{R}^{+} \text{, }
	\end{equation} which for each frequency gives the maximal (over both the input signals and the admissible realisations) input-output gain of the system.
	
	 However, the potential discontinuity of the nominal $\mathcal{H}_{\infty}$-norm with respect to the delays caries over to the robust $\mathcal{H}_{\infty}$-norm. Therefore this paper works with the robust strong $\mathcal{H}_{\infty}$-norm instead:
	\begin{align}
	\label{eq:robust_strong_h_inf_norm}
	|||T(\cdot;\cdot,\vec{\tau})|||^{\hat{\delta}}_{\mathcal{H}_{\infty}} &:= \max_{\delta\in\hat{\delta}} ||| T(\cdot;\delta,\vec{\tau})|||_{\mathcal{H}_{\infty}}\notag 
	\intertext{which, by Property~\ref{prop:expression_strong_h_infnorm}, is equal to}
	|||T(\cdot;\cdot,\vec{\tau})|||^{\hat{\delta}}_{\mathcal{H}_{\infty}} &= \max\Big\{\underbrace{\max_{\delta\in\hat{\delta}}\|T(\cdot;\delta,\vec{\tau})\|_{\mathcal{H}_{\infty}}}_{\|T(\cdot;\cdot,\vec{\tau})\|_{\mathcal{H}_{\infty}}^{\hat{\delta}}},\underbrace{\max_{\delta\in\hat{\delta}}|||T_a(\cdot;\delta,\vec{\tau})|||_{\mathcal{H}_{\infty}}}_{|||T_a(\cdot;\cdot,\vec{\tau})|||_{\mathcal{H}_{\infty}}^{\hat{\delta}}}\Big\}  \text{.}
	\end{align}
	From this definition it follows that either $ |||T(\cdot;\cdot,\vec{\tau})|||^{\hat{\delta}}_{\mathcal{H}_{\infty}} = |||T_a(\cdot;\cdot,\vec{\tau})|||_{\mathcal{H}_{\infty}}^{\hat{\delta}}$ or  $|||T(\cdot;\cdot,\vec{\tau})|||^{\hat{\delta}}_{\mathcal{H}_{\infty}} =\|T(\cdot;\cdot,\vec{\tau})\|_{\mathcal{H}_{\infty}}^{\hat{\delta}}>|||T_a(\cdot;\cdot,\vec{\tau})|||_{\mathcal{H}_{\infty}}^{\hat{\delta}} $. In the former case the robust strong $\mathcal{H}_{\infty}$-norm is equal to the worst-case value of the strong $\mathcal{H}_{\infty}$-norm of the asymptotic transfer function (which we will call the robust strong asymptotic $\mathcal{H}_{\infty}$-norm in the remainder of this paper). In the latter case the robust strong $\mathcal{H}_{\infty}$-norm is equal to maximum of the worst-case gain function (which is attained at finite frequencies). We comeback to this characterisation in Section~\ref{subsec:central_theorem}.

	% Numerically Computing the H infinity norm
	The existing numerical methods to compute the nominal $\mathcal{H}_{\infty}$-norm (of delay free systems) can be divided in two classes. A first group \cite{Benner2018,Boyd1989,Gumussoy2011} is based on the BBBS level-set algorithm presented in \cite{Boyd1989}. These methods repeatedly compute the spectrum of an associated Hamiltonian eigenvalue problem and check for strictly imaginary (ie. real part equal to zero) eigenvalues. Because the cost of this last operation increases cubically with the size of the state matrix, this method is rather slow for large systems. The second class \cite{Benner2014,Guglielmi2013a} avoids this complete eigenvalue decomposition by using the relation between the $\mathcal{H}_{\infty}$-norm and the structured distance to instability (also known as the stability radius \cite{Hinrichsen2005}) of an associated singular eigenvalue problem with a structured, complex-valued perturbation. More specifically:
	\begin{proposition}[{\cite[Proposition 3.2]{Benner2014}}]
		\label{prop:dist_inst_hinf}
		The $\mathcal{H}_{\infty}$-norm of 
		\[T(s) = C(Is-A)^{-1}B + D\] 
		is equal to the reciprocal of the structured distance to instability of
		\begin{equation}
		\label{eq:associated_EP}
		M(\lambda;\Delta) := 
		\begin{bmatrix}
		I & 0 & 0 \\
		0 & 0 & 0 \\
		0 & 0 & 0
		\end{bmatrix}\lambda - \left(\begin{bmatrix}
		A & B & 0 \\
		0 & -I& 0 \\
		C & D & -I
		\end{bmatrix} + \begin{bmatrix}
		0 \\ I \\ 0
		\end{bmatrix} \Delta \begin{bmatrix}
		0 & 0 & I
		\end{bmatrix}\right) \text{,}
		\end{equation} which is defined as the smallest $\epsilon$ such that there exists a $\Delta \in \mathbb{C}^{m\times p}$ with $\| \Delta \|_2 \leq \epsilon$ for characteristic matrix \eqref{eq:associated_EP} is not well-posed (see later on) or has (a) characteristic root(s) in the closed right half plane.
	\end{proposition}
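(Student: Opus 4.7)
The plan is to reduce the characteristic equation $\det M(\lambda;\Delta)=0$ to a transfer-function condition involving $T(\lambda)$, and then invoke the small-gain-type correspondence between destabilising perturbations and the peak singular value of $T$ on the imaginary axis. First, I would write $M(\lambda;\Delta)$ out as the block matrix
\[
M(\lambda;\Delta)=\begin{bmatrix} \lambda I-A & -B & 0 \\ 0 & I & -\Delta \\ -C & -D & I\end{bmatrix}
\]
and compute its determinant by a Schur complement with respect to the $(1,1)$ block: for $\lambda\notin\sigma(A)$ this gives the key identity
\[
\det M(\lambda;\Delta)=\det(\lambda I-A)\,\det\bigl(I-T(\lambda)\Delta\bigr).
\]
Because $\det(\lambda I-A)$ is not identically zero, this shows that the pencil is well-posed iff $\det(I-T(\lambda)\Delta)\not\equiv 0$ and, in that case, the characteristic roots outside $\sigma(A)$ are exactly the $\lambda$ for which $\det(I-T(\lambda)\Delta)=0$.

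Next I would prove the lower bound on the structured distance to instability. Assuming $\|\Delta\|_2<1/\|T\|_{\mathcal{H}_\infty}$, I use the fact that, under the nominal stability hypothesis, $T$ is analytic on the closed right half-plane, and by the maximum-modulus principle applied to $\sigma_1(T(\lambda))$ (subharmonic in $\lambda$) its supremum over the closed RHP is attained on the imaginary axis and equals $\|T\|_{\mathcal{H}_\infty}$. Consequently $\|T(\lambda)\Delta\|_2<1$ for every $\lambda$ with $\Re\lambda\geq 0$, so $I-T(\lambda)\Delta$ is invertible there and the pencil has no closed-RHP characteristic root. Well-posedness follows similarly: since $T(\lambda)\to D$ as $|\lambda|\to\infty$, $\|D\|_2\leq\|T\|_{\mathcal{H}_\infty}$, hence $\det(I-D\Delta)\neq 0$, hence $\det(I-T(\lambda)\Delta)\not\equiv 0$.

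For the matching upper bound I would explicitly construct a destabilising perturbation of arbitrarily small excess norm. For any $\epsilon>1/\|T\|_{\mathcal{H}_\infty}$, pick $\omega^\star\in\mathbb{R}$ with $\sigma_1(T(\jmath\omega^\star))>1/\epsilon$, take an SVD $T(\jmath\omega^\star)=U\Sigma V^\ast$ with leading singular triplet $(\sigma_1,u_1,v_1)$, and set $\Delta:=\sigma_1^{-1}v_1 u_1^\ast\in\mathbb{C}^{m\times p}$. Then $\|\Delta\|_2=\sigma_1^{-1}<\epsilon$ and $T(\jmath\omega^\star)\Delta\,u_1=u_1$, so $\det(I-T(\jmath\omega^\star)\Delta)=0$ and by the identity of Step~1, $\jmath\omega^\star$ is a characteristic root of $M(\cdot;\Delta)$ on the imaginary axis. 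Letting $\omega^\star$ approach a maximiser of $\sigma_1(T(\jmath\cdot))$ shows that the infimum of $\|\Delta\|_2$ over destabilising or ill-posedness-inducing $\Delta$ is exactly $1/\|T\|_{\mathcal{H}_\infty}$, which is the claim.

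The main technical obstacle is, I expect, the careful handling of the well-posedness condition in tandem with the closed-RHP condition: one has to argue that the forbidden set of $\Delta$ contains both the pencils that become singular (which the Schur identity links to $\det(I-D\Delta)=0$ at $\lambda=\infty$) and those that merely acquire a closed-RHP root, and that both phenomena are governed by the same scalar quantity $\|T\|_{\mathcal{H}_\infty}$ via the maximum-modulus argument. Once that is settled, Steps~2 and~3 pinch the structured distance to instability from both sides and give the reciprocal identity.
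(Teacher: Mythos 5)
Your argument is correct, and it is essentially the specialisation to the delay-free, uncertainty-free case of the machinery this paper builds to prove its generalisation (Theorem~\ref{the:main_theoretical_result}): the Schur-complement identity $\det M(\lambda;\Delta)=\det(\lambda I-A)\,\det\bigl(I-\Delta T(\lambda)\bigr)$ is Lemma~\ref{lem:relation_tf_ps_SDEP}, the rank-one optimal perturbation $\sigma_1^{-1}vu^{H}$ is Lemma~\ref{lem:distance_instab_sigma1}, and the maximum-modulus bound $\sigma_1(D)\le\|T\|_{\mathcal{H}_\infty}$ that settles the well-posedness threshold is Lemma~\ref{lem:dist_NWP_dist_CHAIN}. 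The one imprecision is your claim that well-posedness is equivalent to $\det\bigl(I-T(\lambda)\Delta\bigr)\not\equiv 0$ --- that is only regularity, whereas well-posedness additionally requires index at most one and, by Lemma~\ref{lem:condition_wellposed}, is equivalent to $\det(I-\Delta D)\neq 0$ --- but since your Step~2 verifies precisely this latter condition, the argument is unaffected.
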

\noindent  The main computation cost of these last algorithms stems from calculating the right-most eigenvalues of \eqref{eq:associated_EP} for several $\Delta$. For large, sparse matrices these right-most eigenvalues can efficiently be computed using specialised iterative methods such as \cite{Meerbergen1994,Lehoucq1998}. The method presented in this paper fits in this last framework.

	% Samenvatting rest werk
	The remainder of this paper is organised as follows. In Section~\ref{sec:preliminaries} we revise some theory related to (perturbed) singular delay eigenvalue problems. Section~\ref{sec:rob_strong_hinf_dist_instab} contains the main theoretical result of the paper as it gives the relation between the robust strong $\mathcal{H}_{\infty}$-norm of system \eqref{eq:system_description} and the (robust structured complex) distance to instability of an associated singular delay eigenvalue problem. Next, Section~\ref{sec:numerical_algorithm} presents a numerical algorithm, based on this relation, to compute the robust strong $\mathcal{H}_{\infty}$-norm. Subsequently Section~\ref{sec:generalisation} generalises the theory and the presented method to systems with uncertainties on both the coefficient matrices and the delays, and to systems whose nominal model is represented by delay differential algebraic equations. Finally, Sections \ref{sec:examples} and \ref{sec:conclusions} give some numerical examples and concluding remarks. 
	\section{Singular delay eigenvalue problems}
	\label{sec:preliminaries}
	As mentioned in Proposition~\ref{prop:dist_inst_hinf}, there exists a link between the $\mathcal{H}_{\infty}$-norm of a delay free system and the structured distance to instability of an associated singular eigenvalue problem. Section~\ref{sec:rob_strong_hinf_dist_instab} introduces a similar relation between the robust strong $\mathcal{H}_{\infty}$-norm of system \eqref{eq:system_description} and the (robust structured complex) distance to instability of a singular delay eigenvalue problem (SDEP) whose nominal characteristic matrix has the following structure
	\begin{equation}
	\label{eq:model_SDEP}
	M(\lambda;\vec{\tau}) := Q\lambda - P_0 -\sum_{k=1}^{K} P_k e^{-\lambda \tau_k} \
	\end{equation}
	with $Q$, $P_{0}$, ..., $P_{K}$ square matrices of dimension $n_{M}~(=n+m+p)$ and $\vec{\tau} = (\tau_1,\tau_2,\dots,\tau_K) \in (\mathbb{R}^{+})^{K}$ discrete delays. The matrix $Q$ can be singular and in the remainder of this paper $U_{\mathcal{N}}$ and $V_{\mathcal{N}}$ will denote $n_{M}\times (n_{M}-\rank(Q))$-dimensional matrices whose columns form a basis for respectively the left and right nullspace of $Q$.
	The behaviour of such eigenvalue problems is however non-trivial. Therefore this section will revise some related theory. First the focus lies on the  nominal eigenvalue problem (Section~\ref{subsec:SDEP}). Subsequently the effect of structured perturbations is examined (Section~\ref{subsec:distane_to_instability}).

	\subsection{Spectral properties}
	\label{subsec:SDEP}
	To get a better understanding of the eigenvalue problem associated with \eqref{eq:model_SDEP}, we first examine some properties of a singular eigenvalue problem without delays in the characteristic matrix:
	\begin{equation}
		\label{eq:matrix_pencil}
		N(\lambda) := \; Q\lambda-P_0 \text{.}
	\end{equation}
	This characteristic matrix is called regular when its characteristic polynomial \big($\mathbb{C} \ni \lambda\mapsto\det(N(\lambda))$\big) does not vanish identically \cite{Fridman2002}. If \eqref{eq:matrix_pencil} is regular, it can be transformed to Weierstrass-Kronecker canonical form \cite{Fridman2002,Gantmacher1959}: there exist nonsingular matrices $W$ and $T$ such that
	\begin{equation*}
		WQT = \begin{bmatrix}
		I_{\mu} & 0 \\
		0 & N
		\end{bmatrix}\text{ and }
		WP_{0}T = 
		\begin{bmatrix}
		J & 0 \\
		0 & I_{n-\mu}
		\end{bmatrix}
	\end{equation*}
	with $\mu$ the sum of the algebraic multiplicities of all finite eigenvalues, $J$ in Jordan form and $N$ a nilpotent matrix in Jordan form. The index of \eqref{eq:matrix_pencil} is defined as the smallest integer $\nu$ such that $N^{\nu}=0$ (and the index is equal to $0$ if $N$ is void).  
	
	 We now return to our original SDEP. As in \cite{Fridman2002}, characteristic matrix \eqref{eq:model_SDEP} is said to be regular if $Q\lambda-P_0$ is regular and (if regular) its index is equal to the index of $Q\lambda - P_0$. Based on these notions of regularity and index, we present well-posedness in the remainder of this work in the following way:
	\begin{definition}
		Characteristic matrix \eqref{eq:model_SDEP} is called well-posed when it is regular and has at most index 1.
	\end{definition}
\noindent The following lemma allows us to easily verify this well-posedness condition.
	\begin{lemma}[{\cite[Lemma 2]{Bunse-Gerstner1999}}]
		\label{lem:condition_wellposed}
		Characteristic matrix \eqref{eq:model_SDEP} is well-posed if and only if ${U_{\mathcal{N}}}^{H}P_0V_{\mathcal{N}}$ is non-singular. 
	\end{lemma}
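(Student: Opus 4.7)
The plan is to first observe that the stated well-posedness is purely a property of the matrix pencil $N(\lambda) = Q\lambda - P_0$, since regularity of $M(\lambda;\vec{\tau})$ is defined as regularity of this pencil and the index is likewise inherited; the delay terms $\sum_{k=1}^K P_k e^{-\lambda \tau_k}$ are therefore irrelevant. Thus the whole statement reduces to a standard linear-algebra fact about pencils with a singular leading coefficient, and I can focus exclusively on $Q\lambda - P_0$.

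First I would bring $Q$ into a convenient block form. Choose unitary matrices $U, V$ (for instance from an SVD of $Q$) such that
\[ U^H Q V = \begin{bmatrix} Q_{11} & 0 \\ 0 & 0 \end{bmatrix}, \qquad U^H P_0 V = \begin{bmatrix} A & B \\ C & D \end{bmatrix}, \]
with $Q_{11}$ an invertible $r\times r$ block, where $r=\rank(Q)$. Any two bases of the left/right nullspaces of $Q$ differ by a nonsingular transformation, so $U_{\mathcal{N}}^H P_0 V_{\mathcal{N}}$ is nonsingular iff it is for the particular choice $U_{\mathcal{N}}=U\bigl[\begin{smallmatrix}0\\I\end{smallmatrix}\bigr]$, $V_{\mathcal{N}}=V\bigl[\begin{smallmatrix}0\\I\end{smallmatrix}\bigr]$, for which this Gram matrix is exactly $D$. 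The problem thus becomes: prove $Q\lambda - P_0$ is regular with index at most $1$ iff $D$ is nonsingular.

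Next I would analyse the characteristic polynomial. From the Leibniz formula applied to $U^H(Q\lambda-P_0)V$, the variable $\lambda$ appears only in the $r\times r$ top-left block, so $\det(Q\lambda - P_0)$ is a polynomial in $\lambda$ of degree at most $r$. Using the Schur complement for $|\lambda|$ large enough that $Q_{11}\lambda-A$ is invertible,
\[ \det\!\begin{bmatrix} Q_{11}\lambda - A & -B \\ -C & -D \end{bmatrix} = \det(Q_{11}\lambda-A)\,\det\!\bigl(-D - C(Q_{11}\lambda-A)^{-1}B\bigr), \]
and since $(Q_{11}\lambda-A)^{-1}\to 0$ as $\lambda\to\infty$, the coefficient of $\lambda^r$ is a nonzero scalar multiple of $\det(D)$. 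Therefore $\det(Q\lambda-P_0)$ has degree exactly $r$ iff $D$ is nonsingular, and strictly smaller degree (or is identically zero) otherwise.

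To conclude, I would translate this degree condition into the Weierstrass-Kronecker language used in the excerpt. When the pencil is regular with $\mu$ the sum of sizes of its finite Jordan blocks, then $\mu$ is exactly the degree of $\det(Q\lambda - P_0)$. A dimension count gives $\rank Q = \mu + \rank N = \mu + (n_M-\mu) - k$, where $k$ is the number of nilpotent Jordan blocks; hence $k = n_M - r$. The index (maximum nilpotent block size) is at most $1$ precisely when every nilpotent block has size $1$, i.e.\ $n_M-\mu = k$, i.e.\ $\mu = r$. Assembling the chain: well-posed $\iff$ regular with $\mu=r$ $\iff$ $\det(Q\lambda-P_0)$ has degree $r$ $\iff$ $\det D\ne 0$ $\iff$ $U_{\mathcal{N}}^H P_0 V_{\mathcal{N}}$ is nonsingular. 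I expect the most delicate step to be the last one, since it combines the Weierstrass-Kronecker structure theorem with the rank bookkeeping linking $r$, $\mu$, and the block sizes; the Schur-complement degree computation is essentially mechanical once the block reduction is in place.
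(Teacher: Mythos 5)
The paper does not prove this lemma at all: it is imported verbatim as Lemma~2 of the cited reference (Bunse-Gerstner et al.), so there is no in-paper argument to compare against. Your proof is correct and self-contained. The reduction to the pencil $Q\lambda-P_0$ is legitimate because the paper defines regularity and index of $M(\lambda;\vec{\tau})$ purely through that pencil; the basis-independence of the nonsingularity of $U_{\mathcal{N}}^{H}P_0V_{\mathcal{N}}$ is right (two bases differ by nonsingular factors $S,R$, giving $S^{H}(U_{\mathcal{N}}^{H}P_0V_{\mathcal{N}})R$); the Leibniz/Schur-complement computation correctly identifies the coefficient of $\lambda^{r}$ as a nonzero multiple of $\det D$; and the bookkeeping $\rank Q=\mu+\rank N$, $k=n_M-r$, index $\le 1 \iff \mu=r$ closes the chain in both directions, since a nonzero $\lambda^{r}$-coefficient simultaneously forces regularity and $\mu=r$, while $\det D=0$ forces either non-regularity or $\mu<r$ and hence a nilpotent block of size at least $2$. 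This is essentially the standard argument for the index-1 characterisation of descriptor pencils; the only stylistic remark is that the degree-of-determinant detour could be shortened by working directly with the Weierstrass--Kronecker transformation (showing $W^{-H}$ and $T$ map the canonical nullspace bases to bases of the nullspaces of $Q$, so that $U_{\mathcal{N}}^{H}P_0V_{\mathcal{N}}$ is congruent to the block of $I_{n_M-\mu}$ restricted to $\ker N \times \ker N^{H}$), but your route has the advantage of not presupposing regularity when establishing the backward implication.
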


	Next we restrict ourself to well-posed characteristic matrices and examine some properties of their (finite) spectrum:
	\begin{equation}
	\label{eq:spectrum_SDEP}
	\Lambda(\vec{\tau}) := \Big\{\lambda \in \mathbb{C} : \det(M(\lambda;\vec{\tau})) = 0\Big\} \text{.}
	\end{equation}
	Because neutral delay eigenvalue problems can be reformulated in form \eqref{eq:model_SDEP} (see \cite{Michiels2011}), some properties of neutral delay eigenvalue problems carry over to the studied SDEPs. More specifically the spectral abscissa of \eqref{eq:spectrum_SDEP}, ie.  
 	\[
 		\alpha(\vec{\tau}) := \sup\left\{\Re(\lambda) : \lambda \in \Lambda(\vec{\tau}) \right\} \text{,}
 	\]
 	may be discontinuous with respect to the delays. We therefore consider the strong spectral abscissa \cite{Michiels2011}: 
	\begin{equation*}
		\alpha_{s}(\vec{\tau}) :=\limsup_{\gamma\to 0+}\{\alpha(\vec{\tau}_\gamma): \vec{\tau}_{\gamma} \in \mathcal{B}(\vec{\tau},\gamma)\cap (\mathbb{R}^{+})^{K}\}\text{,}
	\end{equation*}
	which has the following property.

 \begin{property}[{\cite[Propostition~3]{Michiels2011}}]
 	\label{prop:definition_strong_spectral_abscissa}
	The strong spectral abscissa satisfies
	\[
	\alpha_{s}(\vec{\tau}) = \max\Big\{\alpha_{D,s}(\vec{\tau}),\alpha(\vec{\tau}) \Big\}  
	\]
	with 
  	$\alpha_{D,s}(\vec{\tau})$ equal to the zero crossing of
	\begin{equation}
	\label{eq:spectral_abcissa_1} 
	\mathbb{R} \ni \varsigma  \mapsto \max_{\vec{\theta} \in \left[0,2\pi\right)^K} \rho\left(\sum_{k=1}^{K}\left({U_{\mathcal{N}}}^{H}P_{0}V_{\mathcal{N}}\right)^{-1}\left({U_{\mathcal{N}}}^{H}P_{k}V_{\mathcal{N}}\right)e^{-\varsigma\tau_k+\jmath\theta_k}\right) -1\text{,}
	\end{equation}
	where $\rho(\cdot)$ is the spectral radius, if such a crossing exists and otherwise $\alpha_{D,s}(\vec{\tau}) = -\infty$. In addition, the strong spectral abscissa is continuous with respect to both the elements of $P_0,\dots,P_K$ and the delays $\vec{\tau}$ as long as ${U_{\mathcal{N}}}^{H}P_{0}V_{\mathcal{N}}$ remains non-singular.
\end{property}

%\begin{property}
%	\label{prop:finite_nb_eigenvalues}
%	For every $c > 0$ there exists a continuous function $R:(\mathbb{R}_0^{+})^{K} \mapsto \mathbb{R}^{+}$, such that the eigenvalues in the right half-plane $\{\lambda \in \mathbb{C}: \Re(\lambda) \geq \alpha_{D,s}(\vec{\tau}) + c \}$ have modulus smaller than $R(\vec{\tau})$.
%\end{property}

 Finally, we introduce the following definition of strong stability based on the notions of well-posedness and strong spectral abscissa.
 \begin{definition}
 	\label{def:strongly_stable}
	A characteristic matrix $M(\lambda;\vec{\tau})$ is strongly stable if it is well-posed and its strong spectral abscissa is strictly negative.
 \end{definition}

	\subsection{Robust structured complex distance to instability} 
	\label{subsec:distane_to_instability}
	This subsection examines the effect of adding perturbations to a strongly stable characteristic matrix. Inspired by Proposition~\ref{prop:dist_inst_hinf}, we focus on a characteristic matrix of the following form: 
	\small{
	\begin{equation}
	\label{eq:SDEP}
	M(\lambda;\delta,\Delta,\vec{\tau}) := 
	\begingroup
	\setlength\arraycolsep{3pt}
	\underbrace{\begin{bmatrix}
		I_n & 0 & 0\\
		0   & 0 & 0\\
		0   & 0 & 0
		\end{bmatrix}}_{Q}
	\lambda\minus
	\underbrace{\begin{bmatrix}
		\tilde{A}_0(\delta) & \tilde{B}_0(\delta) & 0\\
		0   & -I_m& 0 \\
		\tilde{C}_0(\delta) & \tilde{D}_0(\delta) &  -I_p
		\end{bmatrix} \minus
		\begin{bmatrix}
			0 \\
			I_m \\
			0 
			\end{bmatrix}
		\Delta
	\begin{bmatrix} 
			0 & 0 & I_p
		\end{bmatrix}}_{\tilde{P}_0(\delta,\Delta)}
	\!\minus \sum_{k=1}^{K}
	\underbrace{\begin{bmatrix}
		\tilde{A}_k(\delta) & \tilde{B}_k(\delta) & 0 \\
		0   &  0  & 0 \\
		\tilde{C}_k(\delta) & \tilde{D}_k(\delta) & 0
		\end{bmatrix}}_{\tilde{P}_k(\delta)} e^{\minus \lambda \tau_k}
	\endgroup \text{,}
	\end{equation}}\normalsize
with $\tilde{A}_k$, $\tilde{B}_k$, $\tilde{C}_k$, $\tilde{D}_k$ and $\vec{\tau}$ as defined in Section~\ref{sec:introduction} and $\Delta \in \mathbb{C}^{m \times p}$. Observe that characteristic matrix \eqref{eq:SDEP} shares some similarities with \eqref{eq:associated_EP}. But there are two main differences: firstly \eqref{eq:SDEP} contains delay terms (due to the discrete delays in \eqref{eq:system_description}) and secondly \eqref{eq:SDEP} has both real- and complex-valued perturbations. The real-valued perturbations ($\delta$) originate from the uncertainties in model \eqref{eq:system_description} and are therefore confined to the set $\hat{\delta}$. The complex valued perturbation ($\Delta$) on the other hand plays a similar role as in Proposition \ref{prop:dist_inst_hinf}: we are interested in the smallest $\epsilon$ such that there exists a $\Delta \in \mathbb{C}^{m\times p}$ with $\|\Delta\|_2 \leq \epsilon$ for which $M(\lambda;\delta,\Delta,\vec{\tau})$ is not strongly stable for at least one $\delta \in \hat{\delta}$. This critical $\epsilon$ will be called the \emph{robust} (worst-case value over all permissible real-valued perturbations) structured \emph{complex} (to emphasise that only the bound on the complex-valued perturbation is varied) distance to instability. In Section~\ref{sec:rob_strong_hinf_dist_instab} it will be shown that there exists a relation between this robust structured complex distance to instability and the robust strong \mbox{$\mathcal{H}_{\infty}$-norm} of system \eqref{eq:system_description}, while in the remainder of this subsection we characterise this robust structured complex distance to instability in greater detail.

	 From the definition of strong stability (see Definition \ref{def:strongly_stable}), it follows that there are two ways in which a loss of strong stability can occur. Firstly, the characteristic matrix can become non well-posed. The corresponding robust structured complex distance to non well-posedness is defined as:
	\begin{equation}
		\dist_{NWP}(\hat{\delta})  = 
		\begin{cases}
		+\infty, \text{ if $M(\lambda;\delta,\Delta,\vec{\tau})$ is well-posed for all $\Delta \in \mathbb{C}^{m\times p}$ and $\delta\in\hat{\delta}$} \\
		\min_{\substack{\delta \in \hat{\delta} \\ \Delta \in \mathbb{C}^{m \times p}}}\{\|\Delta\|_2 : \text{$M(\lambda;\delta,\Delta,\vec{\tau})$ is not well-posed} \}\text{,  otherwise}
		\end{cases} \text{.}
	\end{equation}

	Secondly, a realisation of \eqref{eq:SDEP} can loose strong stability if its strong spectral abscissa becomes non-negative. Therefore we study the $(\hat{\delta},\epsilon)$-strong pseudo-spectral abscissa of \eqref{eq:SDEP}, which for $\epsilon \in \left[0,\dist_{NWP}\right)$ is defined as the maximal strong spectral abscissa over all realisations of \eqref{eq:SDEP} with $\delta \in \hat{\delta}$ and $\|\Delta\|_2 \leq \epsilon$:
	\[
	 	\alpha_{s}^{\mathrm{ps}}(\hat{\delta},\epsilon,\vec{\tau}) :
 	 	=\max_{\delta \in \hat{\delta}}\max_{\substack{\Delta \in \mathbb{C}^{m\times p}\\ \|\Delta\|_2 \leq \epsilon}} \alpha_{s}(\delta,\Delta,\vec{\tau})
 	 	\]
		with $\alpha_{s}(\delta,\Delta,\vec{\tau})$ the strong spectral abscissa of $M(\lambda;\delta,\Delta,\tau)$. Using Property~\ref{prop:definition_strong_spectral_abscissa} this leads to
		\begin{equation}
		\label{eq:strong_psa}
	 	\alpha_{s}^{\mathrm{ps}}(\hat{\delta},\epsilon,\vec{\tau}) 
	 	= \max\bigg\{\underbrace{\max_{\delta \in  \hat{\delta}}\max_{\substack{\Delta \in \mathbb{C}^{m\times p}\\ \|\Delta\|_2 \leq \epsilon}}\alpha_{D,s}(\delta,\Delta,\vec{\tau})}_{\alpha_{D,s}^{\mathrm{ps}}(\hat{\delta},\epsilon,\vec{\tau})},\underbrace{\max_{\delta \in  \hat{\delta}}\max_{\substack{\Delta \in \mathbb{C}^{m\times p}\\ \|\Delta\|_2 \leq \epsilon}}\alpha(\delta,\Delta,\vec{\tau})}_{\alpha^{\mathrm{ps}}(\hat{\delta},\epsilon,\vec{\tau})}\bigg\} \text{.} 	 	
		\end{equation}
	\begin{remark}
		\label{remark:pseudo_spectral_abscissa}
		The value $\alpha^{\mathrm{ps}}(\hat{\delta},\epsilon,\vec{\tau})$ can be interpreted as the supremum of the real part of the points in the $(\hat{\delta},\epsilon)$-pseudo-spectrum of \eqref{eq:SDEP}, ie.
		\[
		\alpha^{\mathrm{ps}}(\hat{\delta},\epsilon,\vec{\tau}) = \sup\{\Re(\lambda) : \lambda \in \Lambda^{\mathrm{ps}}(\hat{\delta},\epsilon,\vec{\tau})\}
		\]
		with
		\begin{equation}
		\label{eq:combined_real_complex_pseudospectrum}
		\Lambda^{\mathrm{ps}}(\hat{\delta},\epsilon,\vec{\tau}) = \bigcup_{\delta \in \hat{\delta}}\bigcup_{\substack{\Delta \in \mathbb{C}^{m\times p} \\ \|\Delta\|_2\leq\epsilon}} \left\{\lambda \in \mathbb{C} : \det\big(M(\lambda;\delta,\Delta,\vec{\tau}) \big) = 0 \right\} \text{.}
		\end{equation}
	\end{remark}
	 Based on \eqref{eq:strong_psa} we now introduce two other distance measures. Firstly, the robust structured complex distance to a characteristic root chain crossing is defined as
	\begin{equation}
	\dist_{CHAIN}(\hat{\delta}) = 
	\begin{cases}
	+\infty\text{, if } \alpha_{D,s}^{\mathrm{ps}}(\hat{\delta}, \epsilon,\vec{\tau})< 0 \quad \forall \epsilon \in \left[0, \dist_{NWP}\right) \\
	\min\{\varepsilon \in \left[0, \dist_{NWP}\right): \alpha_{D,s}^{\mathrm{ps}}(\hat{\delta},\epsilon,\vec{\tau}) \geq 0 \} \text{, }  \text{otherwise} 
	\end{cases} \text{.}
	\end{equation}
	Secondly, the robust structured complex distance to finite root crossing is defined as
	\begin{equation}
	\label{eq:definition_dist_fin}
	\dist_{FIN}(\hat{\delta})  =
	\begin{cases}
	+\infty\text{, if } \alpha^{\mathrm{ps}}(\hat{\delta},\epsilon,\vec{\tau})< 0 \quad \forall \epsilon \in \left[0,\min\{\dist_{NWP},\dist_{CHAIN}\}\right)\\
	\min\{\epsilon \in \left[0,\min\{\dist_{NWP},\dist_{CHAIN}\}\right) :  \alpha^{\mathrm{ps}}(\hat{\delta},\epsilon,\vec{\tau}) \geq 0 \}\text{, }  \text{otherwise} 
	\end{cases} \text{.}
	\end{equation}
	The names of these two distances can be understood using the following property.
	\begin{property}\cite{Michiels2007}
		\label{prop:strong_difference_spectral_abscissa}
		If $\alpha_{D,s}(\vec{\tau}) \geq 0$ then for each $\gamma>0$ there exist $\vec{\tau}_{\gamma} \in \mathcal{B}(\vec{\tau},\gamma)\cap (\mathbb{R}^{+})^{K}$ and $c\geq0$ such that $\Lambda(\vec{\tau}_{\gamma})$ contains a chain of characteristic roots $\{\lambda_i \}_{i\in\mathbb{N}}$  that satisfies
\[\lim\limits_{i\to \infty} \Re(\lambda_i) = c \qquad \lim\limits_{i\to \infty} |\Im(\lambda_i)| = +\infty \text{.}\] 
		If $\alpha_{D,s}(\vec{\tau})<0$, there exist $\epsilon_1 > 0$ and $\epsilon_2 > 0 $ such that for any $\vec{\tau}_{\epsilon_1} \in \mathcal{B}(\vec{\tau},\epsilon_1)\cap (\mathbb{R}^{+})^{K}$ the number of characteristic roots of $M(\lambda;\vec{\tau}_{\epsilon_1})$ that lie to the right of $-\epsilon_2$ is finite. 
	\end{property}
	A finite robust structured complex distance to a characteristic root chain crossing is thus the smallest $\epsilon$ for which there exist $\delta\in \hat{\delta}$ and $\Delta \in \mathbb{C}^{m\times p}$ with $\|\Delta\|_2 \leq \epsilon$ such that the spectrum of the associated realisation of \eqref{eq:SDEP} contains a chain of characteristic roots with a vertical asymptote in the closed right-half plane, for some delays $\vec{\tau}_\gamma$ that can be chosen arbitrarily close to $\vec{\tau}$. At the same time a finite robust structured complex distance to finite root crossing corresponds to the smallest $\epsilon$ for which there exists $\delta\in \hat{\delta}$ and $\Delta \in \mathbb{C}^{m\times p}$ with $\|\Delta\|_2 \leq \epsilon$ such that the spectrum of $M(\lambda;\delta,\Delta,\vec{\tau})$ has (finitely many) eigenvalues in the closed right half plane (note that for $\epsilon< \dist_{CHAIN}(\hat{\delta})$ the number of roots in the closed right-half plane is finite, even for infinitesimal small delay perturbations). These concepts are illustrated in Examples \ref{example:finite_crossing}, \ref{example:chain_crossing} and \ref{example:strong_chain_crossing} in Section \ref{sec:rob_strong_hinf_dist_instab}.

	\begin{remark}
		\label{remark:critical_epsilon}
		Because the strong spectral abscissa of a given realisation of \eqref{eq:SDEP} is continuous with respect to the elements of $\Delta$ as long as $\|\Delta\|_2<\dist_{NWP}$ (a consequence of Property~\ref{prop:definition_strong_spectral_abscissa}), a transition to a non-negative pseudo-spectral abscissa is characterised by a critical $\epsilon^{\star}$ for which $\alpha_{s}^{\mathrm{ps}}(\hat{\delta},\epsilon^{\star},\vec{\tau}) = 0$.
	\end{remark}
	
	The robust structured complex distance to instability now can be expressed in function of the three distance measures defined above:
	\begin{equation}
	\dist_{INS}(\hat{\delta})  = \min\left\{\dist_{NWP}(\hat{\delta}) ,\dist_{CHAIN}(\hat{\delta}) ,\dist_{FIN}(\hat{\delta})  \right\} \text{.}
	\end{equation}

	\section{Relation between the robust strong $\mathcal{H}_{\infty}$-norm and the robust structured complex distance to instability}
	\label{sec:rob_strong_hinf_dist_instab}
	This section establishes the relation between the robust strong $\mathcal{H}_{\infty}$-norm of system \eqref{eq:system_description} and the robust structured complex distance to instability of \eqref{eq:SDEP}. Section~\ref{subsec:relation_preliminary_results} gives some preliminary results.  In Section~\ref{subsec:distance_nonwellposed} we focus on the relation between the robust strong asymptotic $\mathcal{H}_{\infty}$-norm and the robust structured complex distances to non well-posedness and characteristic root chain crossing. Next, Section~\ref{subsec:distance_finite_root} investigates the link between the worst-case gain function at finite frequencies and the robust structured complex distance to finite root crossing. Finally,  Section~\ref{subsec:central_theorem} combines these results and gives some examples.
	\subsection{Preliminary results}
	\label{subsec:relation_preliminary_results}
	We start with some technical lemmas.
	\begin{lemma} 
		For a matrix $A\in \mathbb{C}^{p\times m}$ it holds that
		\label{lem:distance_instab_sigma1}
		\[
		\sigma_1(A)^{-1}  =\min_{\Delta \in \mathbb{C}^{m \times p}}\left\{ \|\Delta\|_{2} : \det\left(I_m -  \Delta A \right) = 0 \right\} 
		\]
		and
		\[
		\sigma_1(A)^{-1}vu^{H}  \in \argmin_{\Delta \in \mathbb{C}^{m \times p}}\{\|\Delta\|_2 : \det\left(I_m - \Delta A \right) = 0 \} \text{.}
		\]
		where $u$ and $v$ are respectively the left and right singular vectors of $A$ associated with $\sigma_1(A)$, the largest singular value of $A$.
	\end{lemma}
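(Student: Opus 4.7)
The plan is to rewrite the determinantal condition as an eigenvalue condition and then bound the norm of any admissible $\Delta$ from below using submultiplicativity, after which the explicit candidate $\sigma_1(A)^{-1} v u^H$ shows that the bound is tight.

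First I would observe that $\det(I_m - \Delta A) = 0$ is equivalent to the existence of a nonzero vector $x \in \mathbb{C}^m$ with $\Delta A x = x$, i.e., $1$ being an eigenvalue of $\Delta A$. For any such $\Delta$ and $x$, submultiplicativity of the spectral norm gives
\[
\|x\| = \|\Delta A x\| \leq \|\Delta\|_2 \|A\|_2 \|x\| = \|\Delta\|_2 \, \sigma_1(A) \, \|x\|,
\]
so that $\|\Delta\|_2 \geq \sigma_1(A)^{-1}$. This establishes that every $\Delta$ satisfying the determinantal condition has spectral norm at least $\sigma_1(A)^{-1}$, which gives the ``$\geq$'' direction in the first claim.

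Next I would exhibit a minimiser. Let $u \in \mathbb{C}^p$ and $v \in \mathbb{C}^m$ be unit left and right singular vectors of $A$ associated with $\sigma_1(A)$, so that $Av = \sigma_1(A)\, u$. Setting $\Delta^{\star} := \sigma_1(A)^{-1} v u^H \in \mathbb{C}^{m \times p}$, the rank-one structure and the unit norms of $u,v$ yield $\|\Delta^{\star}\|_2 = \sigma_1(A)^{-1}$. A direct computation gives
\[
\Delta^{\star} A v = \sigma_1(A)^{-1} v u^H \bigl(\sigma_1(A)\, u\bigr) = v (u^H u) = v,
\]
so $v$ is an eigenvector of $\Delta^{\star} A$ with eigenvalue $1$, hence $\det(I_m - \Delta^{\star} A) = 0$. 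Combined with the lower bound, this proves both the equality in the first statement and the fact that $\Delta^{\star}$ lies in the argmin.

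There is no real obstacle; the argument is essentially a one-line application of submultiplicativity together with the explicit SVD-based construction of the worst-case perturbation. The only subtlety to be mindful of is the convention for the singular vectors (i.e., matching $Av = \sigma_1(A) u$ so that dimensions of $\Delta^{\star}$ agree with $\mathbb{C}^{m \times p}$), but this is purely bookkeeping.
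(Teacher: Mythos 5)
Your proof is correct. The paper does not actually supply an argument for this lemma---it simply cites Packard and Doyle---and what you have written is precisely the standard argument behind that citation: the lower bound via submultiplicativity applied to an eigenvector of $\Delta A$ for the eigenvalue $1$, and attainment via the rank-one perturbation $\sigma_1(A)^{-1}vu^{H}$ built from the dominant singular pair (with the implicit, harmless assumption $\sigma_1(A)>0$, without which the feasible set is empty and both sides degenerate).
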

	\begin{proof}
		See for example \cite{Packard1993}.
	\end{proof}
	\begin{lemma}
		\label{lem:SDEP_rob_strong_stable_tf_internal_stable}
		The robust structured complex distance to instability of \eqref{eq:SDEP} is non-zero if and only if the characteristic roots of $I\lambda-\tilde{A}_0(\delta)-\sum_{k=1}^{K}\tilde{A}_k(\delta) e^{-\lambda \tau_k}$ lie in the open left half-plane for all $\delta \in \hat{\delta}$ .
	\end{lemma}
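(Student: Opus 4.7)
The key observation is that the characteristic matrix $M(\lambda;\delta,0,\vec{\tau})$ (that is, \eqref{eq:SDEP} evaluated at the complex perturbation $\Delta = 0$) is block lower triangular with diagonal blocks $I\lambda -\tilde{A}_0(\delta) -\sum_{k=1}^{K}\tilde{A}_k(\delta) e^{-\lambda \tau_k}$, $I_m$, and $I_p$. Consequently
\[
\det\bigl(M(\lambda;\delta,0,\vec{\tau})\bigr) \;=\; \det\Bigl(I\lambda-\tilde{A}_0(\delta)-\sum_{k=1}^{K}\tilde{A}_k(\delta) e^{-\lambda \tau_k}\Bigr),
\]
so the finite spectra of the two pencils coincide. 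I would state this block identity as a first lemma-like step and use it throughout.

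Next I would examine the ``strong'' part of the spectrum of $M(\lambda;\delta,0,\vec{\tau})$ via Lemma~\ref{lem:condition_wellposed} and Property~\ref{prop:definition_strong_spectral_abscissa}. Taking $U_{\mathcal{N}} = V_{\mathcal{N}} = [0\ I_{m+p}]^{\top}$ I would compute that $U_{\mathcal{N}}^{H}\tilde{P}_0(\delta,0)V_{\mathcal{N}} = \operatorname{diag}(-I_m,-I_p) + \bigl[\begin{smallmatrix}0&0\\\tilde{D}_0(\delta)&0\end{smallmatrix}\bigr]$ is nonsingular, so well-posedness holds at $\Delta = 0$. Moreover $U_{\mathcal{N}}^{H}\tilde{P}_k(\delta)V_{\mathcal{N}} = \bigl[\begin{smallmatrix}0&0\\\tilde{D}_k(\delta)&0\end{smallmatrix}\bigr]$, and multiplying by $(U_{\mathcal{N}}^{H}\tilde{P}_0(\delta,0)V_{\mathcal{N}})^{-1}$ from the left yields a strictly lower-triangular (hence nilpotent) matrix. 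Its spectral radius is $0$ for every $\varsigma,\vec{\theta}$, so the function in \eqref{eq:spectral_abcissa_1} never crosses zero and $\alpha_{D,s}(\delta,0,\vec{\tau})=-\infty$. Therefore the strong spectral abscissa of $M(\lambda;\delta,0,\vec{\tau})$ coincides with the ordinary spectral abscissa of $I\lambda-\tilde{A}_0(\delta)-\sum_k\tilde{A}_k(\delta) e^{-\lambda\tau_k}$.

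With this reduction the equivalence falls out cleanly. For the ``only if'' direction, $\dist_{INS}(\hat{\delta})>0$ forces $M(\lambda;\delta,0,\vec{\tau})$ to be strongly stable for every $\delta\in\hat{\delta}$, and by the above reduction this is exactly the open-left-half-plane condition on the original system's characteristic roots. For the ``if'' direction, the hypothesis gives, again by the reduction, that $\alpha_{s}\bigl(M(\cdot;\delta,0,\vec{\tau})\bigr) < 0$ for each $\delta\in\hat{\delta}$. Using that $\hat{\delta}$ is compact and that the strong spectral abscissa depends continuously on $\delta$ (Property~\ref{prop:definition_strong_spectral_abscissa}, applicable since well-posedness holds at $\Delta=0$), the maximum of this strong spectral abscissa over $\hat{\delta}$ is strictly negative. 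Joint continuity in $(\delta,\Delta)$ on a compact neighbourhood $\hat{\delta}\times\{\Delta:\|\Delta\|_2\le\epsilon\}$ then yields some $\epsilon>0$ for which strong stability is preserved, so $\dist_{INS}(\hat{\delta})\ge\epsilon>0$.

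The main technical point that needs care is the strong-abscissa reduction: one has to verify that the nilpotent structure of $(U_{\mathcal{N}}^{H}\tilde{P}_0 V_{\mathcal{N}})^{-1}(U_{\mathcal{N}}^{H}\tilde{P}_k V_{\mathcal{N}})$ persists and that well-posedness is stable under small $\Delta$, so that Property~\ref{prop:definition_strong_spectral_abscissa} really applies uniformly over a neighbourhood of $\Delta=0$. Once that is in place, both directions reduce to standard compactness-plus-continuity arguments.
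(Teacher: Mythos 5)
Your proof is correct and follows essentially the same route as the paper: verify well-posedness and $\alpha_{D,s}=-\infty$ at $\Delta=0$ (so $\dist_{NWP}$ and $\dist_{CHAIN}$ are always nonzero), identify the finite spectrum of $M(\lambda;\delta,0,\vec{\tau})$ with that of $I\lambda-\tilde{A}_0(\delta)-\sum_k\tilde{A}_k(\delta)e^{-\lambda\tau_k}$ via the block determinant identity, and conclude by continuity/compactness. You in fact supply the details (the nilpotency computation and the continuity argument for the ``if'' direction) that the paper dismisses with ``it is easy to verify'' and ``one can show''.
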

	\begin{proof}
		Because
		\[
		{U_{\mathcal{N}}}^{H}\tilde{P}_0(\delta,\mathbf{0}_{m\times p}) V_{\mathcal{N}}
		= \begin{bmatrix}
		-I & 0 \\
		\tilde{D}_0(\delta) & -I
		\end{bmatrix}
		\]
		is non-singular for all $\delta \in \hat{\delta}$, it follows from Lemma~\ref{lem:condition_wellposed} that the robust structured complex distance to non well-posedness is non-zero. Furthermore, it is easy to verify that
		\[
		\alpha_{D,s}^{\mathrm{ps}}(\hat{\delta},0,\vec{\tau}) = -\infty
		\]
		and thus also the robust structured complex distance to a root chain crossing is non zero.	Finally, one can show that
		\[
		\alpha^{\mathrm{ps}}(\hat{\delta},0,\vec{\tau}) = \max_{\lambda \in \mathbb{C}}\left\{\Re\left(\lambda\right) : \exists \delta \in \hat{\delta} \text{ such that } \det\left(I\lambda-\tilde{A}_0(\delta)-\sum_{k=1}^{K}\tilde{A}_k(\delta) e^{-\lambda \tau_k}\right) = 0 \right\}
		\text{,}\]
		from which the lemma follows.
	\end{proof}
	By Assumption~\ref{asm:robust_internally_stable} this means that all characteristic matrices examined in the remainder of this paper have a positive, non-zero robust structured complex distance to instability.
%	\begin{lemma}
%		A complex number $s$ is an eigenvalue of $M(\lambda;\delta^{\star},\textbf{0},\vec{\tau})$ if and only if $s$ is a characteristic root of $I\lambda-\tilde{A}_0(\delta^{\star})-\sum_{k=1}^{K}\tilde{A}_k(\delta^{\star}) e^{-\lambda \tau_k}$.
%	\end{lemma}
%	\begin{proof}
%
%			A complex number $s$ is an eigenvalue of $Q\lambda-\tilde{P}_0(\delta^{\star},\textbf{0})-\sum_{k=1}^{K}\tilde{P}_k(\delta^{\star})e^{-\lambda \tau_k}$ if and only if
%			\[
%			\det\left(
%			\renewcommand{\arraystretch}{1.3}\left[
%			\begin{array}{c;{1pt/2pt}cc}
%			I_n s - \tilde{A}_0(\delta^{\star}) - \sum_{k=1}^{K}\tilde{A}_k(\delta^{\star})e^{-s\tau_k}& -\tilde{B}_0(\delta^{\star})-\sum_{k=1}^{K}\tilde{B}_k(\delta^{\star})e^{-s\tau_k} & 0 \\\hdashline[1pt/2pt]
%			0 & I & 0 \\
%			-\tilde{C}_0(\delta^{\star})-\sum_{k=1}^{K}\tilde{C}_k(\delta^{\star})e^{-s\tau_k} & -\tilde{D}_0(\delta^{\star})-\sum_{k=1}^{K}\tilde{D}_k(\delta^{\star})e^{-s\tau_k} & I
%			\end{array}\right] \right) = 0 
%			\]
%			Using Schur's determinant lemma for block partitioned matrices this last condition is fulfilled if and only if
%			 $\det\left(I_n s - \tilde{A}_0(\delta^{\star}) - \sum_{k=1}^{K} \tilde{A}_k(\delta^{\star})e^{-s\tau_k}\right) = 0$. 
%	\end{proof}
	\begin{lemma}
		\label{lem:relation_tf_ps_SDEP}
		Assume that the complex number $s$ is not a characteristic root of $I\lambda-\tilde{A}_0(\delta^{\star})-\sum_{k=1}^{K}\tilde{A}_k(\delta^{\star}) e^{-\lambda \tau_k}$. There exists a $\Delta \in \mathbb{C}^{m \times p}$ with $\|\Delta\|_2\leq \epsilon$ such that $s$ is a characteristic root of $M(\lambda;\delta^{\star},\Delta,\vec{\tau})$ if and only if $\sigma_1\Big(T(s;\delta^{\star},\vec{\tau})\Big) \geq \epsilon^{-1}$. Furthermore, $s$ is a characteristic root of $M(\lambda;\delta^{\star},\sigma_{1}\big(T(s;\delta^{*},\vec{\tau})\big)^{-1} v u^{H},\vec{\tau})$, where $u$ and $v$ are respectively the left and right singular vectors of $T(s;\delta^{*},\vec{\tau})$ associated with its largest singular value.
	\end{lemma}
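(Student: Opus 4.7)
The plan is to reduce the singularity of the block matrix $M(s;\delta^\star,\Delta,\vec\tau)$ at the given frequency $s$ to a scalar-type condition on $\det(I_m + \Delta T(s;\delta^\star,\vec\tau))$, then invoke Lemma~\ref{lem:distance_instab_sigma1} to get both the equivalence $\|\Delta\|_2 \le \epsilon \iff \sigma_1(T(s;\delta^\star,\vec\tau))\ge \epsilon^{-1}$ and an explicit minimum-norm realiser.

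First, I would rewrite $M(s;\delta^\star,\Delta,\vec\tau)$ in block form using the abbreviations $\tilde A(s) := \tilde A_0(\delta^\star)+\sum_k \tilde A_k(\delta^\star) e^{-s\tau_k}$ and analogously $\tilde B(s),\tilde C(s),\tilde D(s)$. Carefully tracking the two minus signs in the definition of $\tilde P_0$ and of $M$, the perturbation term $E\Delta F$ enters with an overall plus sign, so
\[
M(s;\delta^\star,\Delta,\vec\tau) =
\begin{bmatrix}
sI - \tilde A(s) & -\tilde B(s) & 0 \\
0 & I_m & \Delta \\
-\tilde C(s) & -\tilde D(s) & I_p
\end{bmatrix}.
\]
Since by assumption $s$ is not a characteristic root of $I\lambda - \tilde A(s)$, the $(1,1)$ block is invertible. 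Eliminating the first block column with a Schur complement and then the resulting $(3,2)$ entry with a row operation driven by $I_m$ gives a block upper triangular form whose diagonal blocks are $sI - \tilde A(s)$, $I_m$, and $I_p + T(s;\delta^\star,\vec\tau)\Delta$. Applying Sylvester's determinant identity $\det(I_p + T\Delta)=\det(I_m + \Delta T)$, this yields
\[
\det M(s;\delta^\star,\Delta,\vec\tau) = \det\bigl(sI-\tilde A(s)\bigr)\cdot \det\bigl(I_m + \Delta T(s;\delta^\star,\vec\tau)\bigr),
\]
so $s$ is a characteristic root of $M$ if and only if $\det(I_m + \Delta T(s;\delta^\star,\vec\tau))=0$.

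Second, applying Lemma~\ref{lem:distance_instab_sigma1} with the matrix $-T(s;\delta^\star,\vec\tau)$ in the role of $A$, and using $\sigma_1(-T(s;\delta^\star,\vec\tau))=\sigma_1(T(s;\delta^\star,\vec\tau))$, the minimum $\|\Delta\|_2$ over $\Delta\in\mathbb{C}^{m\times p}$ making $\det(I_m+\Delta T(s;\delta^\star,\vec\tau))=0$ equals $\sigma_1(T(s;\delta^\star,\vec\tau))^{-1}$. This gives the claimed equivalence: such a $\Delta$ with $\|\Delta\|_2\le\epsilon$ exists iff $\sigma_1(T(s;\delta^\star,\vec\tau))\ge\epsilon^{-1}$. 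For the explicit minimiser, the lemma supplies a rank-one construction from the singular vectors of $-T(s;\delta^\star,\vec\tau)$; translating back to singular vectors $u,v$ of $T(s;\delta^\star,\vec\tau)$ (normalised by the convention $T(s;\delta^\star,\vec\tau)v = \sigma_1 u$ and $u^H T(s;\delta^\star,\vec\tau) = \sigma_1 v^H$), one verifies directly that $\sigma_1^{-1} v u^H$ yields a kernel vector, concluding the second claim.

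The main obstacles are essentially bookkeeping: keeping the signs straight through the double negation in the definition of $\tilde P_0$ and $M$, and being careful about the phase convention for the singular vectors $u,v$ when translating Lemma~\ref{lem:distance_instab_sigma1} from "$I - \Delta A$" to "$I + \Delta T$". Once the Schur complement factorisation is in place and the singular-value lemma is applied with the correct sign, both halves of the statement follow with no further analytic content.
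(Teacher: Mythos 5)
Your proposal follows the same route as the paper's own proof: write out $M(s;\delta^{\star},\Delta,\vec{\tau})$ in block form, use the invertibility of $sI-\tilde{A}_0(\delta^{\star})-\sum_k\tilde{A}_k(\delta^{\star})e^{-s\tau_k}$ together with Schur's determinant lemma to reduce the singularity of $M$ to a determinant condition involving $\Delta$ and $T(s;\delta^{\star},\vec{\tau})$, and then invoke Lemma~\ref{lem:distance_instab_sigma1}. The substance is right, but your final step does not go through as written. Having (correctly, if one reads the printed definition of $\tilde{P}_0(\delta,\Delta)$ literally) placed $+\Delta$ in the $(2,3)$ block and arrived at $\det\big(I_m+\Delta T(s;\delta^{\star},\vec{\tau})\big)=0$, you claim that one ``verifies directly'' that $\Delta=\sigma_1^{-1}vu^{H}$ yields a kernel vector. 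It does not: with your stated convention $u^{H}T=\sigma_1 v^{H}$ one gets $I_m+\sigma_1^{-1}vu^{H}T=I_m+vv^{H}$, which is nonsingular (and the sign of $vu^{H}$ is invariant under the common phase freedom of the singular vector pair, so it cannot be fixed by renormalisation). Under your convention the minimum-norm perturbation is $-\sigma_1^{-1}vu^{H}$. The paper's proof instead carries $-\Delta$ in that block (consistent with the $+E\Delta F$ convention of Proposition~\ref{prop:dist_inst_hinf}), obtains $\det\big(I_m-\Delta T(s;\delta^{\star},\vec{\tau})\big)=0$, and then Lemma~\ref{lem:distance_instab_sigma1} applies with $A=T(s;\delta^{\star},\vec{\tau})$ directly, matching the stated rank-one perturbation. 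So either adopt that sign for the perturbation term or flip the sign of your claimed minimiser; the equivalence $\|\Delta\|_2\leq\epsilon \iff \sigma_1(T(s;\delta^{\star},\vec{\tau}))\geq\epsilon^{-1}$ is unaffected, since $\Delta\mapsto-\Delta$ preserves the spectral norm.
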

	\begin{proof} 
			A complex number $s$ is a characteristic root of $M(\lambda;\delta^{\star},\Delta,\vec{\tau})$ if and only if
			\[
			\begin{aligned}
			\det\big(M(s;\delta^{\star},\Delta,\vec{\tau})\big)& \scalebox{0.75}{=} \det\left(
			\renewcommand{\arraystretch}{1.3}\left[
			\begin{array}{c;{1pt/2pt}cc}
			I s \minus \tilde{A}_0(\delta^{\star})\minus \sum\limits_{k=1}^{K} \tilde{A}_k(\delta^{\star}) e^{\minus s\tau_k}& \minus \tilde{B}_0(\delta^{\star})-\sum\limits_{k=1}^{K} \tilde{B}_k(\delta^{\star}) e^{\minus s\tau_k} & 0 \\ \hdashline[1pt/2pt]
			0 & I & \minus\Delta \\ 
			\minus \tilde{C}_0(\delta^{\star}) \minus \sum\limits_{k=1}^{K} \tilde{C}_k(\delta^{\star})e^{\minus s \tau_k}& \minus \tilde{D}_0(\delta^{\star}) \minus \sum\limits_{k=1}^{K}  \tilde{D}_k(\delta^{\star})e^{\minus s\tau_k} & I
			\end{array}\right] \right) \\ &= 0\text{.}
			\end{aligned}
			\]
			Because $s I-\tilde{A}_0(\delta^{\star}) - \sum_{k=1}^{K}\tilde{A}_{k}(\delta^{\star})e^{-s\tau}$ is invertible, this last expression can be rewritten, using Schur's determinant lemma for block partitioned matrices, in the following form:
			\[
			\det\left(\begin{bmatrix}
			I & -\Delta \\
			-T(s;\delta^{\star},\vec{\tau}) & I
			\end{bmatrix}\right) =\det\left(I-\Delta T(s;\delta^{\star},\vec{\tau})\right)  = 0 \text{.}
			\] 
			From Lemma~\ref{lem:distance_instab_sigma1} it follows that there exists a $\Delta \in \mathbb{C}^{m\times p}$ with $\|\Delta\|_2 \leq \epsilon$ such that this condition is met if and only if
			\[
			\Big(\sigma_{1}\left(T(s;\delta^{\star},\vec{\tau})\right)\Big)^{-1} \leq \epsilon \text{.}
			\]
			 And if this last condition is met, it follows from the second part of Lemma~\ref{lem:distance_instab_sigma1} than one can choose $\Delta = \sigma_{1}\big(T(s;\delta^{*},\vec{\tau})\big)^{-1} v u^{H}$.
	\end{proof}

%	\begin{corollary}
%		The combined $(\hat{\delta},\epsilon)$-pseudo-spectrum of \eqref{eq:model_perturbed_SDEP}
%		\label{cor:combined_ps}
%		\[
%		\Lambda^{\hat{\delta}}(\epsilon) = \left\{s \in \mathbb{C} : \max_{\delta \in \hat{\delta}} \sigma_1(\tilde{T}(s,\delta)) \geq \epsilon^{-1}\right\}
%		\]
%	\end{corollary}

	\subsection{Link between the asymptotic transfer function and the robust structured complex distances to non well-posedness and characteristic root chain crossing}
	\label{subsec:distance_nonwellposed}
	We start with a characterisation of the robust structured complex distance to non well-posedness in terms of the delay free direct feed-through term of system  \eqref{eq:system_description}.
	\begin{proposition}
		\label{lem:expression_dist_nwp}
		It holds that
		\[
		\dist_{NWP}^{-1}(\hat{\delta}) = \max\limits_{\delta \in \hat{\delta}}\left\{\sigma_{1}\left(\tilde{D}_0(\delta)\right)\right\} \text{.}
		\]
	\end{proposition}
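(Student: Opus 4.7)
My plan is to apply Lemma~\ref{lem:condition_wellposed} to the structured characteristic matrix \eqref{eq:SDEP} and then reduce the resulting singularity condition to a small-gain-type problem on $\tilde{D}_0(\delta)$, which can be solved in closed form by Lemma~\ref{lem:distance_instab_sigma1}.

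First I would fix convenient bases for the nullspaces of $Q$. Since $Q = \mathrm{diag}(I_n,0_m,0_p)$, one may take
\[
U_{\mathcal{N}} = V_{\mathcal{N}} = \begin{bmatrix} 0 \\ I_m \\ 0 \end{bmatrix} \oplus \begin{bmatrix} 0 \\ 0 \\ I_p \end{bmatrix},
\]
so that, after substituting the expression for $\tilde{P}_0(\delta,\Delta)$ and carrying out the block multiplications,
\[
U_{\mathcal{N}}^{H}\,\tilde{P}_0(\delta,\Delta)\,V_{\mathcal{N}} \;=\; \begin{bmatrix} -I_m & -\Delta \\ \tilde{D}_0(\delta) & -I_p \end{bmatrix}.
\]
By Lemma~\ref{lem:condition_wellposed}, $M(\lambda;\delta,\Delta,\vec{\tau})$ fails to be well-posed exactly when this $(m+p)\times(m+p)$ matrix is singular.

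Next I would apply Schur's determinant formula (using the invertible block $-I_m$) to rewrite
\[
\det\begin{bmatrix} -I_m & -\Delta \\ \tilde{D}_0(\delta) & -I_p \end{bmatrix} = (-1)^m\det\bigl(-I_p - \tilde{D}_0(\delta)\Delta\bigr),
\]
which, via the Sylvester identity $\det(I_p + \tilde{D}_0(\delta)\Delta) = \det(I_m + \Delta\tilde{D}_0(\delta))$, is zero if and only if
\[
\det\bigl(I_m - (-\Delta)\,\tilde{D}_0(\delta)\bigr) = 0.
\]
This is precisely the form to which Lemma~\ref{lem:distance_instab_sigma1} applies: with $A = \tilde{D}_0(\delta)$ and the perturbation $-\Delta$, the minimal $\|\Delta\|_2$ making this determinant vanish equals $\sigma_1(\tilde{D}_0(\delta))^{-1}$ (note $\|-\Delta\|_2 = \|\Delta\|_2$).

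Finally, minimising over admissible $\delta$ gives
\[
\dist_{NWP}(\hat{\delta}) \;=\; \min_{\delta\in\hat{\delta}} \sigma_1\bigl(\tilde{D}_0(\delta)\bigr)^{-1} \;=\; \Bigl(\max_{\delta\in\hat{\delta}} \sigma_1(\tilde{D}_0(\delta))\Bigr)^{-1},
\]
which on inversion yields the stated identity. One should also observe that, by the continuity of $\delta\mapsto\sigma_1(\tilde{D}_0(\delta))$ on the compact set $\hat{\delta}$, the maximum is attained, so the $\min/\max$ are genuine. I do not anticipate a real obstacle: the only minor subtlety is the correct application of Sylvester's identity (so that the minimisation is over an $m\times p$ complex perturbation multiplying $\tilde{D}_0(\delta)$, matching exactly the setting of Lemma~\ref{lem:distance_instab_sigma1}); if one chose the other Schur complement, one would arrive at the equivalent identity via the conjugate Sylvester relation.
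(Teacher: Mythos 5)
Your proposal is correct and follows essentially the same route as the paper: apply Lemma~\ref{lem:condition_wellposed} to reduce non-well-posedness to singularity of the $(m+p)\times(m+p)$ block $U_{\mathcal{N}}^{H}\tilde{P}_0(\delta,\Delta)V_{\mathcal{N}}$, collapse that via a Schur-complement/determinant identity to a condition of the form $\det(I_m-\Delta'\tilde{D}_0(\delta))=0$, invoke Lemma~\ref{lem:distance_instab_sigma1}, and minimise over $\delta\in\hat{\delta}$. The only cosmetic differences are that you track the sign of $\Delta$ explicitly (which is immaterial since $\|-\Delta\|_2=\|\Delta\|_2$) and that the paper additionally spells out the degenerate case $\tilde{D}_0(\delta)\equiv 0$, where $\dist_{NWP}=+\infty$; your formula covers it under the usual convention $1/\infty=0$.
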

	\begin{proof} 
		It follows from Lemma~\ref{lem:condition_wellposed} that characteristic matrix \eqref{eq:SDEP} is non well-posed if and only if \[{U_{\mathcal{N}}}^{H}\Big(\tilde{P}_0(\delta,\Delta)\Big)V_{\mathcal{N}} =
			\begin{bmatrix}
			-I & \Delta \\
			\tilde{D}_0(\delta) & -I
			\end{bmatrix}
			\] is singular. Using the Schur-Banachiewicz inversion formula for block partitioned matrices, we can rewrite this condition as: Characteristic matrix \eqref{eq:SDEP} is non well-posed if and only if $I-\Delta\tilde{D}_0(\delta)$ is singular. The robust structured complex distance to non well-posedness is thus equal to
		\begin{align*}
			\dist_{NWP}(\delta) &= \begin{cases}
			+\infty\text{, }\det\left(I-\Delta\tilde{D}_0(\delta)\right) \neq 0 \text{ for all } \Delta \in \mathbb{C}^{m \times p}\text{ and } \delta \in \hat{\delta} \\
			\min_{\substack{\delta \in  \hat{\delta} \\ \Delta \in \mathbb{C}^{m\times p}} }\{\|\Delta\|_2: \det\left(I-\Delta\tilde{D}_0(\delta)\right) = 0  \} \text{, otherwise}
			\end{cases}
			\intertext{Using Lemma~\ref{lem:distance_instab_sigma1}, one finds}
			\dist_{NWP}(\delta) &= \begin{cases}
			+ \infty & \sigma_{1}\Big(\tilde{D}_0(\delta)\Big) = 0 \text{ for all }\delta\in\hat{\delta}\\
			\min\limits_{\delta \in  \hat{\delta}}\left\{ \sigma_{1}\Big(\tilde{D}_0(\delta)\Big)^{-1}\right\}& \text{otherwise}
			\end{cases} \\
			&= \begin{cases}
			+ \infty & \sigma_{1}\Big(\tilde{D}_0(\delta)\Big) = 0 \text{ for all }\delta\in\hat{\delta}\\
			\left(\max\limits_{\delta \in  \hat{\delta}} \left\{\sigma_{1}\Big(\tilde{D}_0(\delta)\Big)\right\}\right)^{-1}& \text{otherwise}
			\end{cases}
		\end{align*}
	\end{proof}
	Next we derive a condition for a finite robust structured complex distance to a characteristic root chain crossing in terms of the robust strong asymptotic $\mathcal{H}_{\infty}$-norm of system \eqref{eq:system_description}.
	\begin{lemma}
		\label{lem:link_strong_chain_tranferf}
		The robust structured complex distance to a characteristic root chain crossing of \eqref{eq:SDEP} is finite if and only if
		\[
		\max_{\delta \in \hat{\delta}}\left\{\sigma_1\left(\tilde{D}_0(\delta)\right)\right\} < |||T_a(\cdot;\cdot,\vec{\tau})|||_{\mathcal{H}_{\infty}}^{\hat{\delta}}
		\]
	\end{lemma}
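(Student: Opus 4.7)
The plan is to make the condition $\alpha_{D,s}(\delta,\Delta,\vec\tau)\geq 0$ explicit for the structure of \eqref{eq:SDEP} via Property~\ref{prop:definition_strong_spectral_abscissa}, and to rephrase it (for $\|\Delta\|_2 < \dist_{NWP}$) as a singular-value inequality on affine combinations of the $\tilde D_k(\delta)$'s. A convexity argument in a parameter $r \in [0,1]$ will then collapse the condition to the $r=1$ case, which by \eqref{eq:strong_H_infnorm_asymptotic} is precisely the strong asymptotic $\mathcal{H}_\infty$-norm; the claimed equivalence then drops out.

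I would start by choosing null-space bases $U_{\mathcal{N}} = V_{\mathcal{N}}$ projecting onto the last $m+p$ block rows/columns of $Q$, so that
\[
U_{\mathcal{N}}^{H}\tilde P_0(\delta,\Delta)V_{\mathcal{N}} = \begin{bmatrix}-I_m & \Delta \\ \tilde D_0(\delta) & -I_p\end{bmatrix}, \quad
U_{\mathcal{N}}^{H}\tilde P_k(\delta)V_{\mathcal{N}} = \begin{bmatrix}0 & 0 \\ \tilde D_k(\delta) & 0\end{bmatrix}.
\]
For $\|\Delta\|_2 < \dist_{NWP}$, Proposition~\ref{lem:expression_dist_nwp} together with Lemma~\ref{lem:distance_instab_sigma1} ensures that $I - \Delta \tilde D_0(\delta)$ (and hence $I - \tilde D_0(\delta)\Delta$) is invertible. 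A direct block inversion then shows that the matrix inside $\rho(\cdot)$ in \eqref{eq:spectral_abcissa_1} at $\varsigma = 0$ equals
\[
\begin{bmatrix} -(I-\Delta \tilde D_0(\delta))^{-1}\Delta\sum_k \tilde D_k(\delta)e^{\jmath\theta_k} & 0 \\ -(I-\tilde D_0(\delta)\Delta)^{-1}\sum_k\tilde D_k(\delta)e^{\jmath\theta_k} & 0 \end{bmatrix},
\]
whose spectral radius coincides with that of its upper-left block. Hence $\alpha_{D,s}(\delta,\Delta,\vec\tau)\geq 0$ holds if and only if, for some $\vec\theta$, this upper-left block has an eigenvalue $\lambda$ with $|\lambda|\geq 1$.

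Next I would translate the eigenvalue condition into a singular-value inequality. From $\det\big(\lambda I + (I-\Delta\tilde D_0(\delta))^{-1}\Delta \sum_k \tilde D_k(\delta) e^{\jmath\theta_k}\big) = 0$, multiplying on the left by $(I-\Delta\tilde D_0(\delta))$, dividing by $\lambda$, and setting $\mu := -\lambda^{-1}$ (so $|\mu|\leq 1$) transforms the condition into $\det\big(I - \Delta(\tilde D_0(\delta) + \mu \sum_k \tilde D_k(\delta) e^{\jmath\theta_k})\big) = 0$. Absorbing $\arg\mu$ into the angles via $\tilde\theta_k := \theta_k + \arg\mu$ and writing $r := |\mu|\in[0,1]$, it becomes $\det\big(I - \Delta(\tilde D_0(\delta) + r\sum_k \tilde D_k(\delta) e^{\jmath\tilde\theta_k})\big) = 0$. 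By Lemma~\ref{lem:distance_instab_sigma1}, such a $\Delta$ with $\|\Delta\|_2\leq \epsilon$ exists (for some admissible $\delta$, $r\in[0,1]$, $\vec{\tilde\theta}$) if and only if
\[
\max_{\delta\in\hat\delta,\;r\in[0,1],\;\vec{\tilde\theta}\in[0,2\pi)^K} \sigma_1\Big(\tilde D_0(\delta) + r\sum_k \tilde D_k(\delta) e^{\jmath\tilde\theta_k}\Big) \geq \epsilon^{-1}.
\]

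The crux is now eliminating the auxiliary $r$. The function $r\mapsto \sigma_1\big(\tilde D_0(\delta)+r\sum_k \tilde D_k(\delta)e^{\jmath\tilde\theta_k}\big)$ is the spectral norm of an affine function of $r$, hence convex, so on $[0,1]$ it is maximised at one of the endpoints. At $r=0$ the maximum over $\delta$ equals $\max_\delta \sigma_1(\tilde D_0(\delta))=\dist_{NWP}^{-1}$, while at $r=1$ the maximum over $\delta$ and $\vec{\tilde\theta}$ equals $|||T_a(\cdot;\cdot,\vec\tau)|||^{\hat\delta}_{\mathcal{H}_\infty}$ by Property~\ref{prop:expression_strong_h_infnorm}. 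A $\Delta$ with $\|\Delta\|_2 < \dist_{NWP}$ triggering $\alpha_{D,s}\geq 0$ therefore exists (i.e., $\dist_{CHAIN}$ is finite) if and only if $|||T_a|||^{\hat\delta}_{\mathcal{H}_\infty} > \max_\delta \sigma_1(\tilde D_0(\delta))$, which is the claim. I expect the convexity step to be the main obstacle: without it one only obtains a condition involving a supremum over $r\in[0,1]$, rather than the clean $r=1$ expression that matches the strong asymptotic $\mathcal{H}_\infty$-norm exactly.
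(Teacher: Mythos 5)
Your argument is correct and its skeleton coincides with the paper's: project onto the null space of $Q$ via Property~\ref{prop:definition_strong_spectral_abscissa}, reduce the chain-crossing condition to $\det\big(I-\Delta(\tilde D_0(\delta)+\sum_k\tilde D_k(\delta)e^{\jmath\theta_k})\big)=0$, and finish with Lemma~\ref{lem:distance_instab_sigma1}, \eqref{eq:strong_H_infnorm_asymptotic} and Proposition~\ref{lem:expression_dist_nwp}. The one place you genuinely deviate is the treatment of the spectral-radius condition: the paper passes directly to the determinant condition $\det\big(I+\sum_k(U_{\mathcal N}^H\tilde P_0V_{\mathcal N})^{-1}U_{\mathcal N}^H\tilde P_kV_{\mathcal N}e^{\jmath\theta_k}\big)=0$ (i.e.\ eigenvalue exactly $-1$), which is legitimate only because of the continuity/critical-$\epsilon$ argument of Remark~\ref{remark:critical_epsilon} forcing $\rho=1$ at the transition, whereas you keep the weaker condition ``eigenvalue of modulus $\geq 1$'', introduce $r=|\lambda|^{-1}\in(0,1]$, and eliminate it by convexity of $r\mapsto\sigma_1\big(\tilde D_0(\delta)+r\sum_k\tilde D_k(\delta)e^{\jmath\tilde\theta_k}\big)$. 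Your route is slightly longer (explicit block inversion instead of Schur's determinant lemma on $U_{\mathcal N}^H\tilde P_0V_{\mathcal N}+\sum_kU_{\mathcal N}^H\tilde P_kV_{\mathcal N}e^{\jmath\theta_k}$) but buys self-containedness: the endpoint comparison $\max\{\dist_{NWP}(\hat\delta)^{-1},|||T_a(\cdot;\cdot,\vec{\tau})|||_{\mathcal{H}_{\infty}}^{\hat{\delta}}\}$ delivers the strict inequality of the statement without invoking the critical-$\epsilon$ continuity argument, and it even renders the appeal to Lemma~\ref{lem:dist_NWP_dist_CHAIN} unnecessary. The only unstated ingredient, shared with the paper, is the monotonicity in $\varsigma$ of \eqref{eq:spectral_abcissa_1}, which justifies evaluating at $\varsigma=0$.
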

	\begin{proof}
			By Property~\ref{prop:definition_strong_spectral_abscissa} and under Assumption~\ref{asm:robust_internally_stable}, the robust structured complex distance to a characteristic root chain crossing of \eqref{eq:SDEP} is finite, if and only if there exist $\delta^{\star} \in \hat{\delta}$, $\vec{\theta}^{\star}\in [0,2\pi)^{K}$ and $\Delta^{\star}\in\mathbb{C}^{m\times p}$ with $\|\Delta^{\star}\|_2  < \dist_{NWP}$  such that  
			\[
			 \det\left(I+\sum_{k=1}^{K}\left({U_{\mathcal{N}}}^{H} \tilde{P}_0(\delta^{\star},\Delta^{\star})V_{\mathcal{N}}\right)^{-1}{U_{\mathcal{N}}}^{H}\tilde{P}_k(\delta^{\star}) V_{\mathcal{N}}e^{\jmath\theta_k^{\star}}\right) = 0 \text{.}
			\]
			Because ${U_{\mathcal{N}}}^{H}\tilde{P}_0(\delta^{\star},\Delta^{\star})V_{\mathcal{N}}$ is non-singular ($\|\Delta^{\star}\|_2< \dist_{NWP}$), this last condition is equivalent with
			\[
			\det\left({U_{\mathcal{N}}}^{H}\tilde{P}_0(\delta^{\star},\Delta^{\star})V_{\mathcal{N}}+\sum_{k=1}^{K}{U_{\mathcal{N}}}^{H}\tilde{P}_{k}(\delta^{\star}) V_{\mathcal{N}}e^{\jmath\theta_k^{\star}}\right) = 0 \text{.}
			\]
			Plugging in the definitions of $\tilde{P}_0,\dots,\tilde{P}_k$ and using Schur's determinant lemma for block partitioned matrices the right hand side of the condition reduces to: there exist $\delta^{\star} \in \hat{\delta}$, $\vec{\theta}^{\star}\in [0,2\pi)^{K}$ and $\Delta^{\star}$ with $\|\Delta^{\star}\|_2  < \dist_{NWP}$ such that
			\[
			\det\left(I-\Delta^{\star}\left(\tilde{D}_0(\delta^{\star})+\sum_{k=1}^{K}\tilde{D}_k(\delta^{\star}) e^{\jmath\theta_k^{\star}}\right)\right) = 0\text{.}
			\]
			The lemma follows from \eqref{eq:strong_H_infnorm_asymptotic}, Lemma~\ref{lem:distance_instab_sigma1} and Proposition~\ref{lem:expression_dist_nwp}.
	\end{proof}
	The following lemma gives a lower bound for the robust strong asymptotic $\mathcal{H}_{\infty}$-norm.
	\begin{lemma}
	\label{lem:dist_NWP_dist_CHAIN}
	It holds that
	\[
	\max_{\delta \in \hat{\delta}}\left\{\sigma_{1}\Big(\tilde{D}_0(\delta)\Big) \right\}\leq  \max_{\delta \in  \hat{\delta}}\max_{\theta \in [0,2\pi)^{K}} \left\{ \sigma_{1}\left(\tilde{D}_0(\delta)+\sum_{k=1}^{K}
	\tilde{D}_k(\delta) e^{\jmath\theta_k}\right)\right\} = |||T_a(\cdot;\cdot,\vec{\tau})|||_{\mathcal{H}_{\infty}}^{\hat{\delta}} \text{.}\]
\end{lemma}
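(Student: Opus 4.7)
The plan is to handle the two assertions separately. The equality on the right is essentially a direct unwinding of definitions combined with Property~\ref{prop:expression_strong_h_infnorm}, while the inequality on the left requires a brief variational argument on the largest singular value.

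For the equality, I would start from the definition
\[
|||T_a(\cdot;\cdot,\vec{\tau})|||_{\mathcal{H}_{\infty}}^{\hat{\delta}} = \max_{\delta \in \hat{\delta}} |||T_a(\cdot;\delta,\vec{\tau})|||_{\mathcal{H}_{\infty}}
\]
and then apply equation~\eqref{eq:strong_H_infnorm_asymptotic} from Property~\ref{prop:expression_strong_h_infnorm} to each fixed realisation $\delta$, which gives $|||T_a(\cdot;\delta,\vec{\tau})|||_{\mathcal{H}_{\infty}} = \max_{\vec{\theta} \in [0,2\pi)^K} \sigma_1\!\big(\tilde{D}_0(\delta) + \sum_k \tilde{D}_k(\delta) e^{\jmath \theta_k}\big)$. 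Combining the two maxima yields the stated expression.

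For the inequality, the idea is to show that for every fixed $\delta^{\star} \in \hat{\delta}$, there exist angles $\vec{\theta} \in [0,2\pi)^K$ for which $\sigma_1\big(\tilde{D}_0(\delta^{\star}) + \sum_k \tilde{D}_k(\delta^{\star}) e^{\jmath \theta_k}\big) \geq \sigma_1\big(\tilde{D}_0(\delta^{\star})\big)$; taking the maximum over $\delta^{\star}$ afterwards gives the claim. I would use the variational characterisation $\sigma_1(A) = \max_{\|u\|=\|v\|=1}|u^H A v|$. Let $u,v$ be unit left/right singular vectors associated with $\sigma_1\big(\tilde{D}_0(\delta^{\star})\big)$ and set $z_k := u^H \tilde{D}_k(\delta^{\star}) v$. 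Choosing each $\theta_k$ so that $z_k e^{\jmath \theta_k}$ has the same argument as $z_0$ gives
\[
\Big|u^H \tilde{D}_0(\delta^{\star}) v + \sum_{k=1}^{K} u^H \tilde{D}_k(\delta^{\star}) v \, e^{\jmath \theta_k}\Big| = |z_0| + \sum_{k=1}^{K} |z_k| \geq |z_0| = \sigma_1\big(\tilde{D}_0(\delta^{\star})\big).
\]
Since this quantity is bounded above by $\sigma_1\big(\tilde{D}_0(\delta^{\star}) + \sum_k \tilde{D}_k(\delta^{\star}) e^{\jmath \theta_k}\big)$, the desired pointwise inequality follows and maximisation over $\delta^{\star}$ completes the argument.

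I do not anticipate a serious obstacle: the equality is a bookkeeping step on top of already-cited material, and the inequality reduces to a one-line alignment-of-phases argument in a rank-one test vector pair. The only subtle point worth writing carefully is that the angles $\theta_k$ are chosen after fixing the test vectors $u,v$, so that the inequality ``max over $(u,v)$ bounds $\sigma_1$'' is applied in the correct direction.
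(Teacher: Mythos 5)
Your proof is correct, but the argument for the left-hand inequality follows a genuinely different route from the paper's. The paper fixes $\delta$ and regards $s \mapsto \tilde{D}_0(\delta) + \sum_{k} \tilde{D}_k(\delta) e^{-s\tau_k}$ as a matrix-valued function that is analytic and bounded on the closed right half-plane; it then invokes a maximum-principle result from \cite{Boyd1984} to conclude that $\sigma_1$ of this function attains its supremum over $\Re(s)\geq 0$ on the imaginary axis, and lets $s\to+\infty$ along the real axis so that the delayed terms vanish, giving $\sigma_1\big(\tilde{D}_0(\delta)\big) \leq \max_{\omega}\sigma_1\big(\tilde{D}_0(\delta)+\sum_k \tilde{D}_k(\delta)e^{-\jmath\omega\tau_k}\big) \leq \max_{\vec{\theta}}\sigma_1\big(\tilde{D}_0(\delta)+\sum_k \tilde{D}_k(\delta)e^{\jmath\theta_k}\big)$. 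Your phase-alignment argument with the singular vectors of $\tilde{D}_0(\delta^{\star})$ reaches the same conclusion by purely finite-dimensional linear algebra: it avoids the analyticity and boundedness hypotheses and the external citation, and it jumps directly to the maximum over $[0,2\pi)^{K}$ without passing through the frequency-response maximum as an intermediate quantity, which is arguably cleaner since independence of the phases is exactly what the right-hand side offers. The only cosmetic caveats are the degenerate cases: if $z_0=0$ (i.e.\ $\tilde{D}_0(\delta^{\star})=0$) the phrase ``same argument as $z_0$'' is undefined but the inequality is trivial, and if some $z_k=0$ any $\theta_k$ may be chosen; it would be worth one sentence to dispose of these. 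The equality on the right is handled the same way in both treatments --- it is just \eqref{eq:strong_H_infnorm_asymptotic} maximised over $\delta\in\hat{\delta}$, a step the paper leaves implicit and you spell out.
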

\begin{proof}
	Consider the matrix-valued function $\mathbb{C} \ni s  \mapsto  D_0 + \sum_{k=1}^{K} D_k e^{-s} \in \mathbb{C}^{p\times m}$, of which every entry is analytic and bounded for $\Re(s)\geq 0$. From \cite{Boyd1984} it follows that $s\mapsto \sigma_{1}(D_0 + \sum_{k=1}^{K} D_k e^{-s})$ attains it maximum over the closed right-halfplane on $\Re(s) = 0$. And thus
	\[
	\begin{aligned}
	\sigma_{1}(D_0) =  \lim\limits_{s\to \infty} \sigma_{1}\left(D_0 + \sum_{k=1}^{K} D_k e^{\scalebox{0.7}[1.0]{\( - \)}s}\right)&\leq  \max_{\omega\in\mathbb{R}} \sigma_{1}\left(D_0 + \sum_{k=1}^{K} D_k e^{\scalebox{0.7}[1.0]{\( - \)}\jmath\omega}\right) \\
	& \leq \max_{\theta \in [0,2\pi)^{K}}\sigma_{1}\left(D_0 + \sum_{k=1}^{K} D_k e^{\jmath\theta_k}\right) \text{.}
	\end{aligned}	
	\]
\end{proof}
	By combining these results we get an expression for the robust strong asymptotic $\mathcal{H}_{\infty}$-norm in terms of the robust structured distances to non well-posedness and a characteristic root chain crossing.
	\begin{proposition}
		\label{prop:dist_NWP_CHAIN_asympt}
		The robust strong asymptotic $\mathcal{H}_{\infty}$-norm of \eqref{eq:system_description}  is equal to the reciprocal of the minimum of the robust structured complex distance to non well-posedness and the robust structured complex distance to a characteristic root chain crossing, ie.
		\[
		|||T_a(\cdot;\cdot,\vec{\tau})|||_{\mathcal{H}_\infty}^{\hat{\delta}} =  \min\{\dist_{NWP}(\hat{\delta}),\dist_{CHAIN}(\hat{\delta})\}^{-1} \text{.}
		\]
	\end{proposition}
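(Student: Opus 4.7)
The plan is to combine Proposition~\ref{lem:expression_dist_nwp}, Lemma~\ref{lem:link_strong_chain_tranferf} and Lemma~\ref{lem:dist_NWP_dist_CHAIN} by splitting on which of the two distances attains the minimum, using Lemma~\ref{lem:dist_NWP_dist_CHAIN} as the dividing line. Concretely, define
\[
\mu := \max_{\delta \in \hat{\delta}} \sigma_{1}\bigl(\tilde{D}_0(\delta)\bigr), \qquad M := |||T_a(\cdot;\cdot,\vec{\tau})|||_{\mathcal{H}_{\infty}}^{\hat{\delta}},
\]
so that Proposition~\ref{lem:expression_dist_nwp} gives $\dist_{NWP}^{-1} = \mu$ (with the convention $1/0 = +\infty$) and Lemma~\ref{lem:dist_NWP_dist_CHAIN} gives $\mu \leq M$.

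First I would treat the equality case $\mu = M$. Lemma~\ref{lem:link_strong_chain_tranferf} then tells us (by contrapositive of its strict-inequality criterion) that $\dist_{CHAIN}(\hat{\delta}) = +\infty$. Hence $\min\{\dist_{NWP},\dist_{CHAIN}\} = \dist_{NWP}$, whose reciprocal is $\mu = M$, as required.

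Next I would treat the strict case $\mu < M$. Retracing the chain of equivalences in the proof of Lemma~\ref{lem:link_strong_chain_tranferf}, a loss of well-posedness-free chain stability for fixed $\delta^\star \in \hat{\delta}$ and $\vec{\theta}^\star \in [0,2\pi)^{K}$ is equivalent to $\det\bigl(I - \Delta(\tilde{D}_0(\delta^\star) + \sum_{k=1}^{K}\tilde{D}_k(\delta^\star) e^{\jmath\theta_k^\star})\bigr) = 0$. By Lemma~\ref{lem:distance_instab_sigma1}, the infimum of $\|\Delta\|_2$ over all such $\Delta$ equals $\sigma_1\bigl(\tilde{D}_0(\delta^\star)+\sum_k \tilde{D}_k(\delta^\star)e^{\jmath\theta_k^\star}\bigr)^{-1}$, and minimizing over $(\delta^\star,\vec{\theta}^\star)$ together with the identity \eqref{eq:strong_H_infnorm_asymptotic} gives the candidate value $\dist_{CHAIN}(\hat{\delta}) = M^{-1}$. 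To legitimise this minimum I still need the side constraint $\|\Delta\|_2 < \dist_{NWP}(\hat{\delta}) = \mu^{-1}$ built into the definition of $\dist_{CHAIN}$; but under the strict-inequality hypothesis $\mu < M$ the candidate $M^{-1}$ satisfies $M^{-1} < \mu^{-1}$, so the constraint is automatically active and the minimum is indeed attained inside the feasible set. Consequently $\dist_{CHAIN}(\hat{\delta}) = M^{-1} < \mu^{-1} = \dist_{NWP}(\hat{\delta})$, and the minimum equals $\dist_{CHAIN}$ with reciprocal $M$, as required.

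The main obstacle I anticipate is the bookkeeping in the strict case: the defining infimum for $\dist_{CHAIN}$ is taken over an open set $\|\Delta\|_2 < \dist_{NWP}$, so one must argue that the optimal singular-vector $\Delta$ from Lemma~\ref{lem:distance_instab_sigma1} actually lies strictly inside this ball, and that continuity of $\alpha_{D,s}^{\mathrm{ps}}$ in $\epsilon$ (via Remark~\ref{remark:critical_epsilon} and Property~\ref{prop:definition_strong_spectral_abscissa}) makes the value $M^{-1}$ a genuine minimum rather than merely an infimum. Once $\mu < M$ is used to decouple the two constraints, however, these continuity and attainment issues become routine, and the two cases combine to give exactly $|||T_a(\cdot;\cdot,\vec{\tau})|||_{\mathcal{H}_\infty}^{\hat{\delta}} = \min\{\dist_{NWP}(\hat{\delta}),\dist_{CHAIN}(\hat{\delta})\}^{-1}$.
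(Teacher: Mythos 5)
Your proof is correct and follows essentially the same route as the paper: both arguments reduce $\dist_{CHAIN}$ to the determinant condition $\det\bigl(I-\Delta(\tilde{D}_0(\delta)+\sum_k \tilde{D}_k(\delta)e^{\jmath\theta_k})\bigr)=0$ via Lemma~\ref{lem:distance_instab_sigma1} and \eqref{eq:strong_H_infnorm_asymptotic}, and both handle the degenerate case by combining Lemma~\ref{lem:link_strong_chain_tranferf} with Lemma~\ref{lem:dist_NWP_dist_CHAIN}. The only cosmetic difference is that you split on $\mu=M$ versus $\mu<M$ while the paper splits on $\dist_{CHAIN}$ being finite or not, which by Lemma~\ref{lem:link_strong_chain_tranferf} is the same dichotomy.
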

	\begin{proof}
		
		First we consider the case where $\dist_{CHAIN}(\hat{\delta})<\dist_{NWP}(\hat{\delta})$. Using a similar idea as in the proof of Lemma~\ref{lem:link_strong_chain_tranferf} it can be shown that for $\epsilon \in [0,\dist_{NWP}(\hat{\delta}))$ $\alpha_{D,s}^{\mathrm{ps}}(\hat{\delta},\epsilon,\vec{\tau}) \geq 0$ if and only if
		\[
		|||T_a(\cdot;\cdot,\vec{\tau})|||_{\mathcal{H}_{\infty}}^{\hat{\delta}} \geq \epsilon^{-1} \text{.}
		\]
		The smallest $\epsilon$ such this last condition is fulfilled is equal to ${|||\tilde{T}_a(\cdot;\cdot,\vec{\tau})|||_{\mathcal{H}}^{\hat{\delta}}}^{-1}$ and thus \[|||\tilde{T}_a(\cdot;\cdot,\vec{\tau})|||_{\mathcal{H}}^{\hat{\delta}} = \dist_{CHAIN}(\hat{\delta})^{-1}\].
		Next we consider the case where the robust structured complex distance to a characteristic root chain crossing is not finite. It follows from Lemma~\ref{lem:link_strong_chain_tranferf} that $\max_{\delta\in\hat{\delta}}\left\{\sigma_1\left(\tilde{D}_0(\delta)\right)\right\} \geq |||T_a(\cdot;\cdot,\vec{\tau})|||_{\mathcal{H}_{\infty}}^{\hat{\delta}}$. But by Lemma~\ref{lem:dist_NWP_dist_CHAIN} we have $\allowbreak \max_{\delta\in\hat{\delta}} \left\{\sigma_1\left(\tilde{D}_0(\delta)\right)\right\} \leq |||T_a(\cdot;\cdot,\vec{\tau})|||_{\mathcal{H}_{\infty}}^{\hat{\delta}}$. 
		Thus in this case
		\[
		|||T_a(\cdot;\cdot,\vec{\tau})|||_{\mathcal{H}_{\infty}}^{\hat{\delta}} =
		\max_{\delta\in\hat{\delta}}\left\{\sigma_1\left(\tilde{D}_0(\delta)\right)\right\} = \dist_{NWP}(\hat{\delta})^{-1} \text{.}
		\]
	\end{proof}

	\subsection{Link between the worst-case gain function at finite frequencies and the robust structured complex distance to finite root crossing}
	\label{subsec:distance_finite_root}
	The previous subsection established a relation between the robust strong asymptotic $\mathcal{H}_{\infty}$-norm and the robust structured complex distances to non well-posedness and a characteristic root chain crossing. In this subsection the link between the worst-case gain function (as defined in \eqref{eq:worst_case_transfer_function}) and the robust structured complex distance to finite root crossing is examined.
	\begin{lemma}
		\label{lem:link_finite_root_crossing_transferf}
		The robust structured complex distance to finite root crossing is finite if and only if
		system \eqref{eq:system_description} attains its robust strong $\mathcal{H}_{\infty}$-norm at a finite frequency (ie. $||T(\cdot;\cdot,\vec{\tau})||_{\mathcal{H}_{\infty}}^{\hat{\delta}}>|||T_a(\cdot;\cdot,\vec{\tau})|||_{\mathcal{H}_{\infty}}^{\hat{\delta}}$). In such a case it holds 
		\[ |||T(\cdot;\cdot,\vec{\tau})|||_ {\mathcal{H}_{\infty}}^{\hat{\delta}} =  \|T(\cdot;\cdot,\vec{\tau})\|_ {\mathcal{H}_{\infty}}^{\hat{\delta}} = \dist_{FIN}^{-1}(\hat{\delta}) \text{.}
		\]
		
	\end{lemma}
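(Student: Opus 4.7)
My approach rests on Lemma~\ref{lem:relation_tf_ps_SDEP}, which furnishes the dictionary: for any $\omega\in\mathbb{R}$ (which, by Assumption~\ref{asm:robust_internally_stable}, is never a nominal characteristic root), the point $\jmath\omega$ lies in the $(\hat{\delta},\epsilon)$-pseudo-spectrum of \eqref{eq:SDEP} for some $\delta\in\hat{\delta}$ if and only if $\max_{\delta\in\hat{\delta}}\sigma_1(T(\jmath\omega;\delta,\vec{\tau}))\geq \epsilon^{-1}$. Combined with Proposition~\ref{prop:dist_NWP_CHAIN_asympt}, which identifies $|||T_a(\cdot;\cdot,\vec{\tau})|||_{\mathcal{H}_\infty}^{\hat{\delta}}=\min\{\dist_{NWP}(\hat{\delta}),\dist_{CHAIN}(\hat{\delta})\}^{-1}$, this translates statements about $\dist_{FIN}(\hat{\delta})$ into statements about the worst-case gain function \eqref{eq:worst_case_transfer_function}. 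A recurring ingredient is that, for any $\epsilon<\dist_{CHAIN}(\hat{\delta})$, the second part of Property~\ref{prop:strong_difference_spectral_abscissa} forces a non-negative pseudo-spectral abscissa to be attained at a finite root rather than by a chain escaping to $\Im(\lambda)=\pm\infty$.

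For the ``if'' direction, assume $\|T(\cdot;\cdot,\vec{\tau})\|_{\mathcal{H}_\infty}^{\hat{\delta}}>|||T_a(\cdot;\cdot,\vec{\tau})|||_{\mathcal{H}_\infty}^{\hat{\delta}}$. Using the cited Property that $\sigma_1\bigl(T(\jmath\omega)-T_a(\jmath\omega)\bigr)\to 0$ as $\omega\to\infty$ (applied pointwise to each realisation in $\hat{\delta}$, whose dependence on $\delta$ is continuous on a compact set), the strict excess above $|||T_a|||_{\mathcal{H}_\infty}^{\hat{\delta}}$ must be realised at a finite frequency $\omega^{\star}\in\mathbb{R}$ and some $\delta^{\star}\in\hat{\delta}$. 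Setting $\epsilon=\sigma_1(T(\jmath\omega^{\star};\delta^{\star},\vec{\tau}))^{-1}$, Proposition~\ref{prop:dist_NWP_CHAIN_asympt} yields $\epsilon<\min\{\dist_{NWP}(\hat{\delta}),\dist_{CHAIN}(\hat{\delta})\}$, and the second assertion of Lemma~\ref{lem:relation_tf_ps_SDEP} produces a $\Delta$ of spectral norm $\epsilon$ for which $\jmath\omega^{\star}$ is a characteristic root of $M(\lambda;\delta^{\star},\Delta,\vec{\tau})$. Hence $\alpha^{\mathrm{ps}}(\hat{\delta},\epsilon,\vec{\tau})\geq 0$, and $\dist_{FIN}(\hat{\delta})\leq\epsilon<\infty$.

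For the converse, set $\epsilon^{\star}:=\dist_{FIN}(\hat{\delta})<\min\{\dist_{NWP}(\hat{\delta}),\dist_{CHAIN}(\hat{\delta})\}$. Remark~\ref{remark:critical_epsilon} (continuity of the strong spectral abscissa in $\Delta$ while well-posedness is preserved) gives $\alpha^{\mathrm{ps}}(\hat{\delta},\epsilon^{\star},\vec{\tau})=0$, and since $\epsilon^{\star}<\dist_{CHAIN}(\hat{\delta})$ Property~\ref{prop:strong_difference_spectral_abscissa} ensures that the zero-real-part root is finite, yielding $\omega^{\star}\in\mathbb{R}$, $\delta^{\star}\in\hat{\delta}$ and $\Delta^{\star}$ with $\|\Delta^{\star}\|_2\leq\epsilon^{\star}$ such that $\jmath\omega^{\star}$ is a characteristic root of $M(\lambda;\delta^{\star},\Delta^{\star},\vec{\tau})$. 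Lemma~\ref{lem:relation_tf_ps_SDEP} then forces $\sigma_1(T(\jmath\omega^{\star};\delta^{\star},\vec{\tau}))\geq(\epsilon^{\star})^{-1}>|||T_a|||_{\mathcal{H}_\infty}^{\hat{\delta}}$, which both proves the strict gap $\|T\|_{\mathcal{H}_\infty}^{\hat{\delta}}>|||T_a|||_{\mathcal{H}_\infty}^{\hat{\delta}}$ and supplies the bound $\|T\|_{\mathcal{H}_\infty}^{\hat{\delta}}\geq \dist_{FIN}(\hat{\delta})^{-1}$. The reverse inequality was already produced by the construction in the previous paragraph, so $\|T\|_{\mathcal{H}_\infty}^{\hat{\delta}}=\dist_{FIN}(\hat{\delta})^{-1}$, and substituting into \eqref{eq:robust_strong_h_inf_norm} gives the claimed equality $|||T|||_{\mathcal{H}_\infty}^{\hat{\delta}}=\|T\|_{\mathcal{H}_\infty}^{\hat{\delta}}$. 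The step I expect to be most delicate is justifying that the critical imaginary-axis crossing occurs at a \emph{finite} $\omega^{\star}$: this is where the asymptotic Property of $T-T_a$ and Property~\ref{prop:strong_difference_spectral_abscissa} must be combined carefully, using the strict separation $\epsilon^{\star}<\dist_{CHAIN}(\hat{\delta})$ to exclude root-chain phenomena at infinity.
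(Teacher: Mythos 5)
Your proof is correct and follows essentially the same route as the paper: both directions rest on Lemma~\ref{lem:relation_tf_ps_SDEP} to translate imaginary-axis characteristic roots into the condition $\sigma_1(T(\jmath\omega;\delta,\vec{\tau}))\geq\epsilon^{-1}$, combined with Proposition~\ref{prop:dist_NWP_CHAIN_asympt} to relate the threshold to $|||T_a|||_{\mathcal{H}_\infty}^{\hat{\delta}}$, and Remark~\ref{remark:critical_epsilon} together with $\epsilon^{\star}<\dist_{CHAIN}(\hat{\delta})$ to guarantee the critical crossing occurs at a finite root. You are in fact slightly more careful than the paper in justifying that the worst-case gain is actually attained at a finite frequency (via the decay of $T-T_a$), a point the paper passes over with ``by maximising $\sigma_1(T(\jmath\omega;\delta,\vec{\tau}))$''.
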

	\begin{proof} {\ \\ }
		$\Rightarrow$ It follows from Remark~\ref{remark:critical_epsilon} and Assumption~\ref{asm:robust_internally_stable} that if the robust structured complex distance to finite root crossing is finite, there exist $\delta^{\star} \in \hat{\delta}$, $\Delta^{\star} \in \mathbb{C}^{m \times p}$ with $\|\Delta^{\star}\|_2\leq \dist_{FIN}(\hat{\delta})$ and a finite $\omega\in\mathbb{R}$ such that $\jmath\omega$ is a characteristic root of $M(\lambda;\delta^{\star},\Delta^{\star},\vec{\tau})$. By Lemma~\ref{lem:relation_tf_ps_SDEP}, this means that $$\sigma_1(T(\jmath\omega;\delta^{\star},\vec{\tau}))\geq (\dist_{FIN}(\hat{\delta}))^{-1} > \left(\min\{\dist_{NWP}(\hat{\delta}),\dist_{CHAIN}(\hat{\delta}) \}\right)^{-1}\text{.}$$ Using Proposition~\ref{prop:dist_NWP_CHAIN_asympt} one finds that $$\|T(\cdot;\cdot,\vec{\tau})\|_{\mathcal{H}_{\infty}}^{\hat{\delta}} \geq\sigma_1(T(\jmath\omega;\delta^{\star},\vec{\tau})) > |||T_a(\cdot;\cdot,\vec{\tau})|||_{\mathcal{H}_{\infty}}^{\hat{\delta}}\text{.}$$ \\
		$\Leftarrow$ If the robust strong $\mathcal{H}_{\infty}$-norm is attained at a finite frequency then there exists $\delta \in \hat{\delta}$ and
			$\omega \in \mathbb{R}$ such that $\sigma_1(T(\jmath\omega;\delta,\vec{\tau})) >  |||T_a(\cdot;\cdot,\vec{\tau})|||_{\mathcal{H}_{\infty}}^{\hat{\delta}}$. From Lemma~\ref{lem:relation_tf_ps_SDEP} and Proposition~\ref{prop:dist_NWP_CHAIN_asympt} it follows that there exists a $\Delta$ with $\|\Delta\|_2 =\sigma_1(T(\jmath\omega;\delta,\vec{\tau}))^{-1}< \min\left\{\dist_{NWP}(\hat{\delta}),\dist_{CHAIN}(\hat{\delta})\right\}$ such that characteristic matrix $M(\lambda;\delta,\Delta,\vec{\tau})$ has a characteristic root at $\jmath \omega$. \\ 
		$\|T(\cdot;\cdot,\vec{\tau})\|_{\mathcal{H}_{\infty}}^{\hat{\delta}} = \dist_{FIN}(\hat{\delta})^{-1}$ is found using Lemma~\ref{lem:relation_tf_ps_SDEP} and by maximising $\sigma_1(T(\jmath\omega;\delta,\vec{\tau}))$ over all $\delta \in \hat{\delta}$ and $\omega\in \mathbb{R}$.
	\end{proof}

	\subsection{Main theoretical result}
	\label{subsec:central_theorem}
	In this subsection the results of the two previous subsections are combined to characterise the robust strong $\mathcal{H}_{\infty}$-norm in terms of the robust structured complex distance to instability. Subsequently some examples are given. 
	\begin{theorem}
		\label{the:main_theoretical_result}
		The robust strong $\mathcal{H}_{\infty}$-norm of an internally exponentially stable system of form \eqref{eq:system_description} is equal to the reciprocal of the robust structured complex distance to instability of characteristic matrix \eqref{eq:SDEP}, ie.
		\[
		|||T(\cdot;\cdot,\vec{\tau})|||_{\mathcal{H}_{\infty}}^{\hat{\delta}} = \frac{1}{\dist_{INS}(\hat{\delta})}
		\]
	\end{theorem}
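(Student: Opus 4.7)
The plan is to combine the two preceding subsections by case analysis on whether the robust structured complex distance to finite root crossing, $\dist_{FIN}(\hat{\delta})$, is finite or infinite. The backbone of the argument is the decomposition
\[
|||T(\cdot;\cdot,\vec{\tau})|||_{\mathcal{H}_{\infty}}^{\hat{\delta}} = \max\Big\{\|T(\cdot;\cdot,\vec{\tau})\|_{\mathcal{H}_{\infty}}^{\hat{\delta}},\; |||T_a(\cdot;\cdot,\vec{\tau})|||_{\mathcal{H}_{\infty}}^{\hat{\delta}}\Big\}
\]
from \eqref{eq:robust_strong_h_inf_norm}, together with Proposition~\ref{prop:dist_NWP_CHAIN_asympt} (which identifies the robust strong asymptotic norm with $\min\{\dist_{NWP},\dist_{CHAIN}\}^{-1}$) and Lemma~\ref{lem:link_finite_root_crossing_transferf} (which characterises when $\dist_{FIN}$ is finite and then identifies it with the full nominal robust norm).

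First I would treat the case $\dist_{FIN}(\hat{\delta}) < +\infty$. By Lemma~\ref{lem:link_finite_root_crossing_transferf}, the robust strong $\mathcal{H}_{\infty}$-norm is then attained at a finite frequency, so $\|T(\cdot;\cdot,\vec{\tau})\|_{\mathcal{H}_{\infty}}^{\hat{\delta}} > |||T_a(\cdot;\cdot,\vec{\tau})|||_{\mathcal{H}_{\infty}}^{\hat{\delta}}$, and $|||T|||_{\mathcal{H}_{\infty}}^{\hat{\delta}} = \|T\|_{\mathcal{H}_{\infty}}^{\hat{\delta}} = \dist_{FIN}^{-1}(\hat{\delta})$. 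Combining this strict inequality with Proposition~\ref{prop:dist_NWP_CHAIN_asympt} gives $\dist_{FIN}(\hat{\delta}) < \min\{\dist_{NWP}(\hat{\delta}),\dist_{CHAIN}(\hat{\delta})\}$, so $\dist_{INS}(\hat{\delta}) = \dist_{FIN}(\hat{\delta})$ and the identity follows.

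Next I would handle $\dist_{FIN}(\hat{\delta}) = +\infty$. By the contrapositive part of Lemma~\ref{lem:link_finite_root_crossing_transferf}, the robust strong norm cannot be strictly dominated by the finite-frequency worst-case gain, hence $|||T|||_{\mathcal{H}_{\infty}}^{\hat{\delta}} = |||T_a|||_{\mathcal{H}_{\infty}}^{\hat{\delta}}$. Proposition~\ref{prop:dist_NWP_CHAIN_asympt} then gives $|||T|||_{\mathcal{H}_{\infty}}^{\hat{\delta}} = \min\{\dist_{NWP}(\hat{\delta}),\dist_{CHAIN}(\hat{\delta})\}^{-1}$, and since $\dist_{FIN}(\hat{\delta})$ is infinite this minimum equals $\dist_{INS}(\hat{\delta})$. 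The two cases together cover all possibilities and yield the stated identity.

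The main obstacle is essentially bookkeeping: one has to verify that the two cases are genuinely exhaustive and consistent, i.e., that $\dist_{FIN}$ being finite forces it to be strictly smaller than the other two distances (so that the minimum defining $\dist_{INS}$ is really attained by $\dist_{FIN}$), and that Assumption~\ref{asm:robust_internally_stable} together with Lemma~\ref{lem:SDEP_rob_strong_stable_tf_internal_stable} prevents any degenerate situation where $\dist_{INS}(\hat{\delta}) = 0$. Both of these are immediate consequences of the strict inequality in Lemma~\ref{lem:link_finite_root_crossing_transferf} and the positivity assertion of Lemma~\ref{lem:SDEP_rob_strong_stable_tf_internal_stable}, so no additional analytic work is required; the theorem is obtained essentially by merging the two min/max identities already proved.
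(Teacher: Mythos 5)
Your proposal is correct and follows essentially the same route as the paper: the paper's proof is exactly the two-case split on whether the robust strong $\mathcal{H}_{\infty}$-norm is attained at a finite frequency (equivalently, by Lemma~\ref{lem:link_finite_root_crossing_transferf}, whether $\dist_{FIN}(\hat{\delta})$ is finite), invoking Lemma~\ref{lem:link_finite_root_crossing_transferf} in the first case and Proposition~\ref{prop:dist_NWP_CHAIN_asympt} in the second. Your additional bookkeeping (that a finite $\dist_{FIN}$ is automatically smaller than $\min\{\dist_{NWP},\dist_{CHAIN}\}$, which in fact already follows from the definition \eqref{eq:definition_dist_fin} restricting $\epsilon$ to $\left[0,\min\{\dist_{NWP},\dist_{CHAIN}\}\right)$) is a correct and slightly more explicit version of what the paper leaves implicit.
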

	\begin{proof}
		If the robust strong $\mathcal{H}_{\infty}$-norm is attained at finite frequencies then the result follows from Lemma~\ref{lem:link_finite_root_crossing_transferf}. Otherwise the result follows from Proposition~\ref{prop:dist_NWP_CHAIN_asympt}.
	\end{proof}	
	As mentioned in Section~\ref{sec:introduction}, the robust strong $\mathcal{H}_{\infty}$-norm of system \eqref{eq:system_description} is either equal to the robust strong asymptotic $\mathcal{H}_{\infty}$-norm or to the maximum of the worst-case gain function. In Section~\ref{subsec:distance_nonwellposed} it was shown that the former is related to the robust structured complex distances to non well-posedness and a characteristic root chain crossing. Section~\ref{subsec:distance_finite_root} proved that the latter relates with the robust structured complex distance to finite root crossing. The following examples illustrate this duality in more detail.

		\begin{example}
		\label{example:finite_crossing}
		In this first example we consider the following uncertain system
		\begin{equation}
		\label{eq:illustration_link_2_system}
		\left\{ 
		\begin{array}{rcl}
		\dot{x}(t) & = & (\minus 7+3\delta_1) x(t)  +(-5+2\delta_2)x(t-1) + 4 w(t) \\ [2ex]
		z(t) & = & (2\minus 2\delta_2) x(t) +  (1+\delta_1)w(t) +  w(t-1)
		\end{array}
		\right.
		\end{equation}
		where $\delta_1$ and $\delta_2$ are respectively confined to $|\delta_1| \leq 0.15$ and $|\delta_2| \leq 0.2$, and its corresponding characteristic matrix:
		\begin{equation}
		\label{eq:illustration_link_2_SDEP}
		\begin{bmatrix}
		\lambda & 0 & 0 \\
		0 & 0 & 0 \\
		0 & 0 & 0
		\end{bmatrix}  - \begin{bmatrix}
		(-7+3\delta_1) & 4  & 0 \\
		0  & -1 & \Delta \\
		(2-2\delta_2) & (1+\delta_1)  & -1
		\end{bmatrix}
		- \begin{bmatrix}
		(-5+2\delta_2)& 0 & 0 \\
		0 & 0 & 0 \\
		0 & 1 & 0 
		\end{bmatrix}e^{-\lambda} \text{.}
		\end{equation}		
	 First we examine the robust structured complex distance to instability of \eqref{eq:illustration_link_2_SDEP}. Subsequently, we illustrate the relation of this distance measure with the behaviour of system  \eqref{eq:illustration_link_2_system}.

	Proposition~\ref{lem:expression_dist_nwp} gives us an expression for the robust structured complex distance to non well-posedness: 
		\[\dist_{NWP}(\hat{\delta}) = \Big(\max_{|\delta_1|\leq 0.15} |1+\delta_1|\Big)^{-1} = 1/1.15 = 0.8696 \text{.}\] 
	To find the robust structured complex distances to a characteristic root chain crossing and finite root crossing we plot $\alpha^{\mathrm{ps}}(\hat{\delta},\epsilon,\vec{\tau})$ and $\alpha_{D,s}^{\mathrm{ps}}(\hat{\delta},\epsilon,\vec{\tau})$ as a function of $\epsilon$ \Big(for $\epsilon<\dist_{NWP}(\hat{\delta})$\Big) in Figure~\ref{fig:illustration_link2_psa}:  
\[
\begin{aligned}
\dist_{CHAIN}(\hat{\delta}) &= 0.4651 \\
\dist_{FIN}(\hat{\delta}) &= 0.1663 \text{.}
\end{aligned}
\]	
The robust structured complex distance to instability is thus equal to $0.1663$ and the loss of strong stability is caused by a finite number of characteristic roots moving into the closed right-half plane. This is illustrated in Figures~\ref{fig:illustration_link2_spectrum_fin} and \ref{fig:illustration_link2_pseudo_spectrum}. Figure~\ref{fig:illustration_link2_spectrum_fin} shows the spectrum of  \eqref{eq:illustration_link_2_SDEP} for the perturbations associated with the loss of strong stability and Figure~\ref{fig:illustration_link2_pseudo_spectrum} shows its $(\hat{\delta},\epsilon)$-pseudo-spectrum for $\epsilon$ smaller than, equal to, and larger than the distance to instability. We observe that the loss of strong stability is caused by characteristic roots moving into the closed right-half plane at $s=\pm\jmath 2.734$.

	Next, we examine how these distance measures relate to the robust strong $\mathcal{H}_{\infty}$-norm of system \eqref{eq:illustration_link_2_system}. The robust strong asymptotic $\mathcal{H}_{\infty}$-norm follows from \eqref{eq:strong_H_infnorm_asymptotic}: 
	\[
	\begin{aligned}
	|||T_a(\cdot;\cdot,\vec{\tau})|||_{\mathcal{H}_{\infty}}^{\hat{\delta}} &= 
	\max_{|\delta_1|\leq 0.15} \max_{\theta\in [0,2\pi)} \Big|(1+\delta_1)+1e^{\jmath\theta}\Big| \\ &= 2.15 \\ \Big(&= \min\{\dist_{NWP}(\hat{\delta}),\dist_{CHAIN}(\hat{\delta})\}^{-1}\Big)\text{.}
	\end{aligned}	
	\]
	 Figure~\ref{fig:illustration_link2_supT} plots the worst-case gain function. This function attains a maximum value of $6.012$ ($=\dist_{FIN}(\hat{\delta})^{-1}$, indicated in magenta) at a finite frequency ($\omega = 2.734$). Furthermore, for this example the robust strong asymptotic $\mathcal{H}_{\infty}$-norm can also be deduced from this worst-case gain function (indicated in red) as the system has only one delay which means that: 
	\[|||T_a(\cdot;\cdot,\vec{\tau})|||_{\mathcal{H}_{\infty}}^{\hat{\delta}}= \|T_a(\cdot;\cdot,\vec{\tau})\|_{\mathcal{H}_{\infty}}^{\hat{\delta}} = \limsup_{\omega \to \infty} \max_{\delta\in \hat{\delta}} \sigma_1\Big(T(\jmath\omega;\delta,\vec{\tau})\Big) = 2.15\text{.}\]
	The robust strong $\mathcal{H}_{\infty}$-norm is thus equal to $6.012$ and is equal to the maximum of the worst-case gain functions. On figure \ref{fig:illustration_link2_supT} we have also indicated $\dist_{NWP}(\hat{\delta})^{-1} = \max_{\delta \in \hat{\delta}} \left\{\sigma_1\left(\tilde{D}_0(\delta)\right)\right\}$. One observes that as $\omega$ goes to infinity, the worst-case gain function oscillates around this value.	
		\begin{figure}[!hbtp]
			\centering
			\begin{minipage}[t]{0.48\textwidth}
				\centering
				\includegraphics[width=\linewidth]{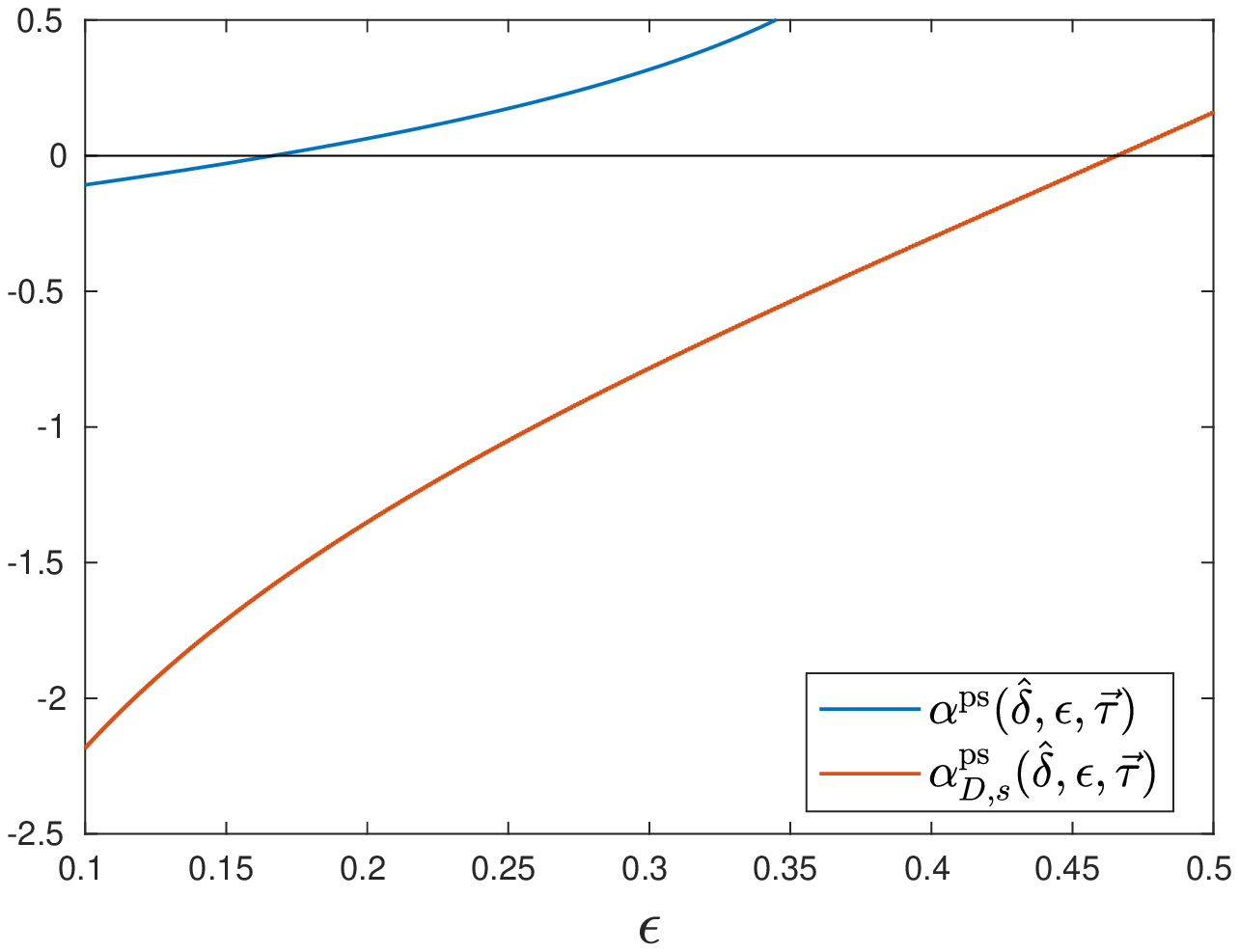}
				\captionsetup{width=.8\linewidth}
				\caption{$\alpha^{\mathrm{ps}}(\hat{\delta},\epsilon,\vec{\tau})$ (blue) and $\alpha_{D,s}^{\mathrm{ps}}(\hat{\delta},\epsilon,\vec{\tau})$ (red) of characteristic matrix \eqref{eq:illustration_link_2_SDEP} in function of $\epsilon$.}
				\label{fig:illustration_link2_psa}
			\end{minipage}
			\begin{minipage}[t]{0.48\linewidth}
				\centering
				\includegraphics[width=\linewidth]{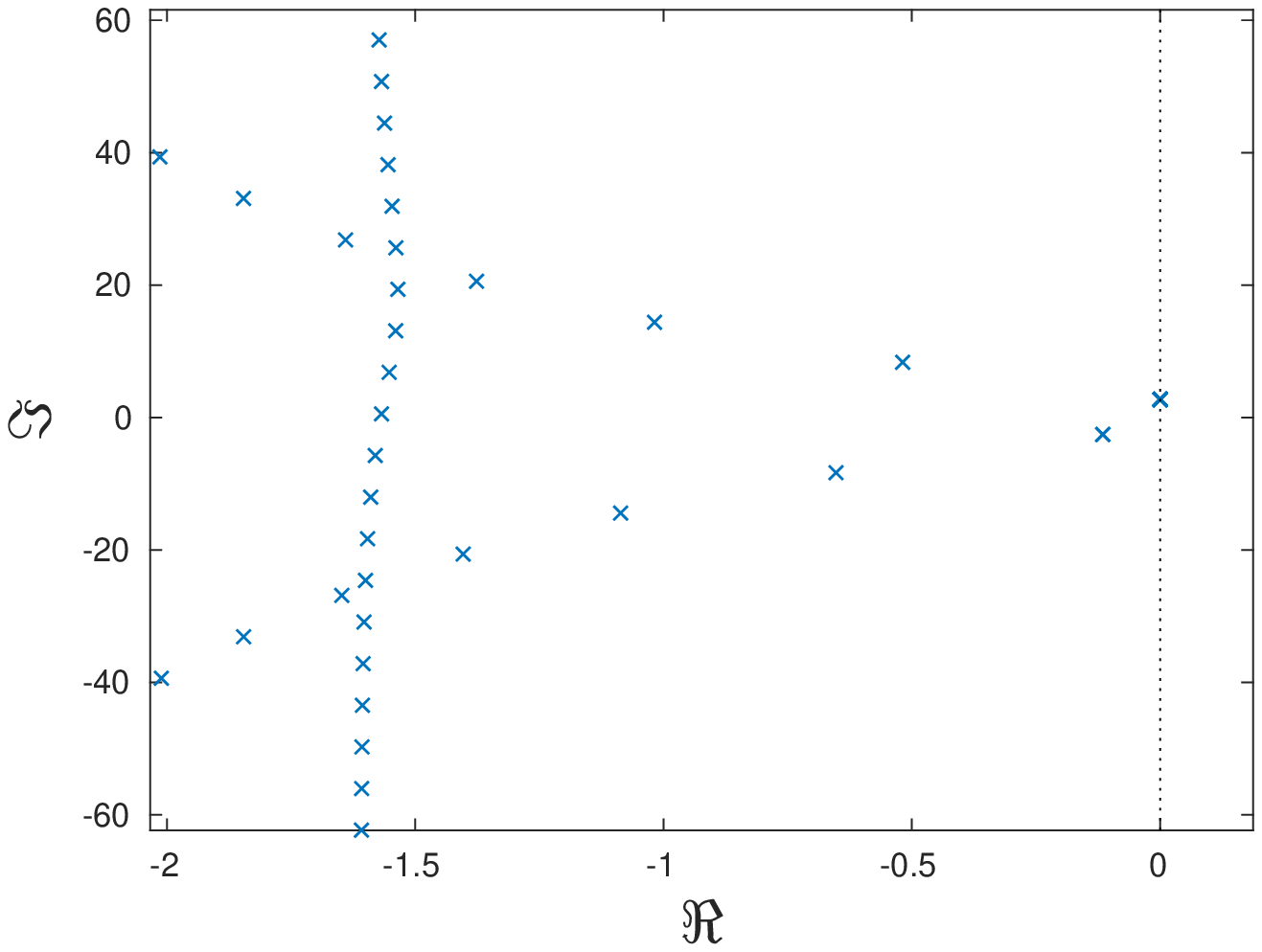}
				\caption{The spectrum of characteristic matrix \eqref{eq:illustration_link_2_SDEP} for the perturbations associated with the loss of strong stability: $\Delta=0.1663e^{\jmath0.4065}$, $\delta_1 = 0.15$ and $\delta_2 = -0.2$.}
				\label{fig:illustration_link2_spectrum_fin}
			\end{minipage}
		\end{figure}
		\begin{figure}[!hbtp]
			\centering
			\begin{subfigure}{0.48\linewidth}
				\centering
				\includegraphics[width=\linewidth]{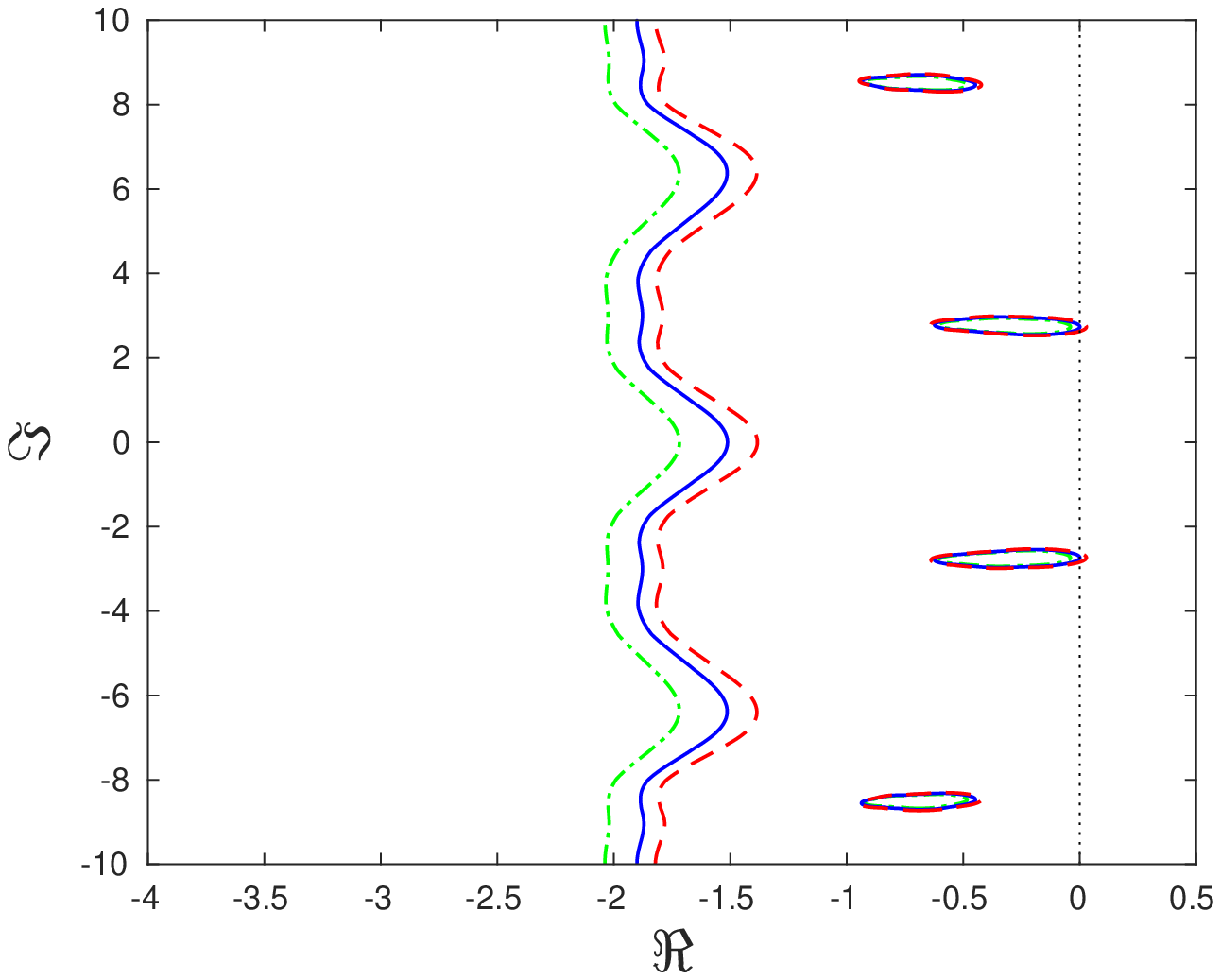}
			\end{subfigure}
			\begin{subfigure}{0.48\linewidth}
				\centering
				\includegraphics[width=\linewidth]{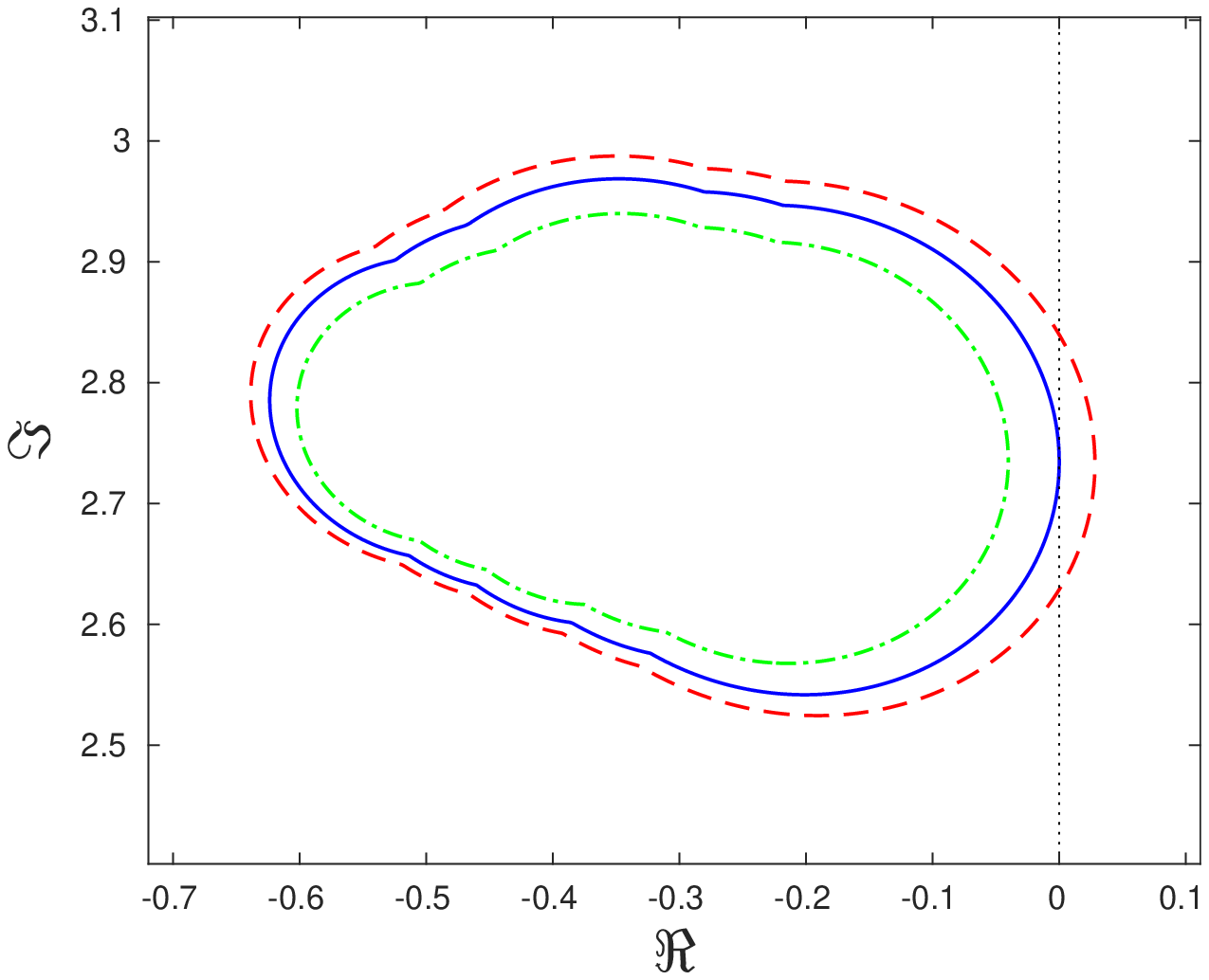}
			\end{subfigure}
			\caption{The $(\hat{\delta},\epsilon)$-pseudo-spectrum of characteristic matrix \eqref{eq:illustration_link_2_SDEP} for $\epsilon$ equal to $0.1429$ (green dot dash line), $0.1663$ (blue full line) and $0.2$ (red dashed line).}
			\label{fig:illustration_link2_pseudo_spectrum}
		\end{figure}
		\begin{figure}
				\centering
				\includegraphics[width=0.6\linewidth]{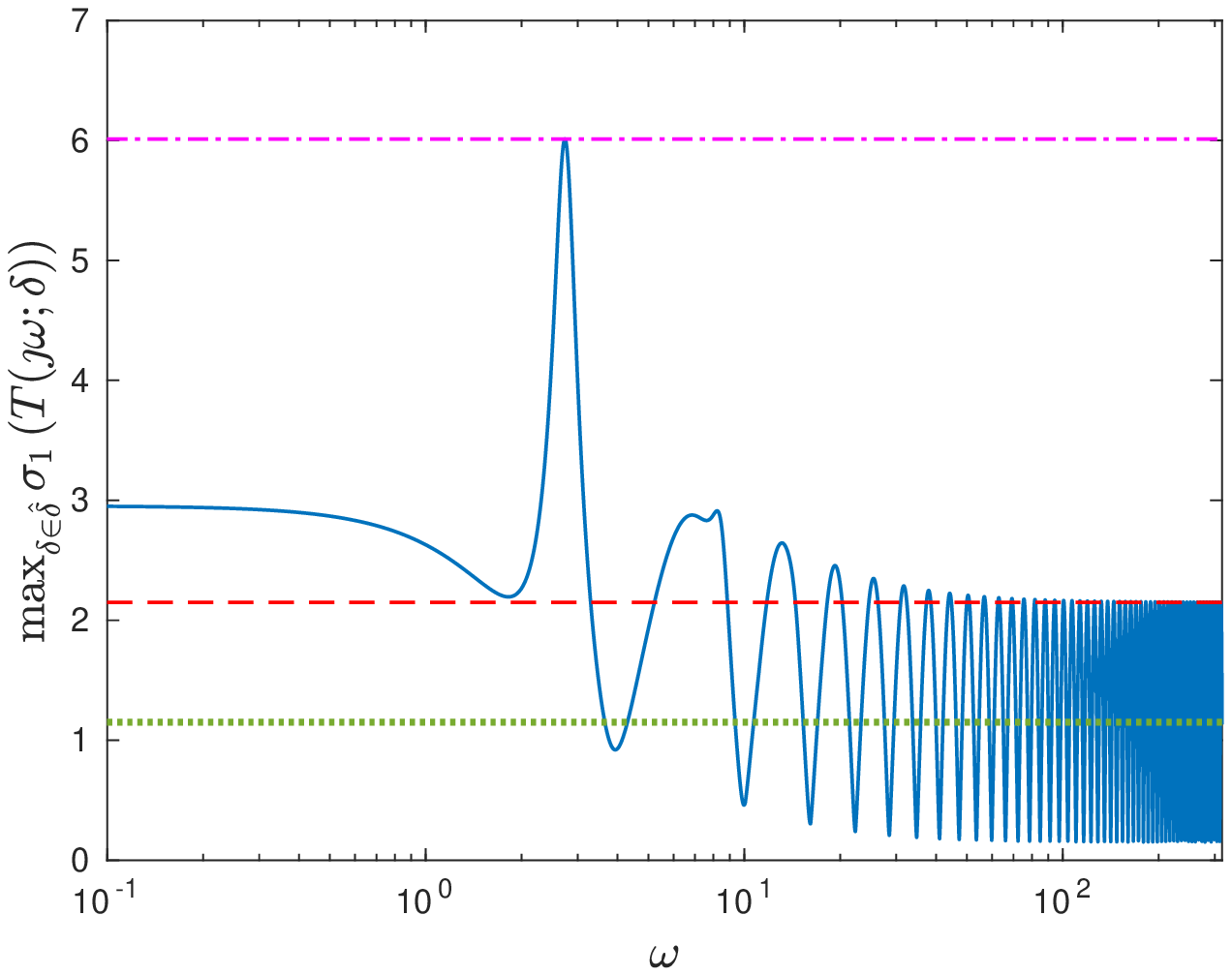}
				\captionsetup{width=\linewidth}
				\caption{The worst-case gain function of system~\eqref{eq:illustration_link_2_system} (blue), $||T(\cdot;\cdot,\vec{\tau})||_{\mathcal{H}_{\infty}}^{\hat{\delta}}$ (magenta dot-dashed), $|||T_a(\cdot;\cdot,\vec{\tau})|||_{\mathcal{H}_{\infty}}^{\hat{\delta}}$ (=$\|T_a(\cdot;\cdot,\vec{\tau})\|_{\mathcal{H}_{\infty}}^{\hat{\delta}}$) (red dashed) and $\dist_{NWP}(\hat{\delta})^{-1}$ (green dotted).}
				\label{fig:illustration_link2_supT}
			\end{figure}
	\end{example}
	\begin{example}
	\label{example:chain_crossing}
	Next we consider the following uncertain system:
	\begin{equation}
		\label{eq:illustration_link3_system}
		\left\{
		\begin{array}{rcl}
		\dot{x}(t) & = & (-3+\delta_2)x(t) + (-1+3\delta_1)x(t-1)+ 4w(t) \\
		z(t) & =& (-2-3\delta_1+2\delta_2) x(t) + (3+\delta_1) w(t)+ (1+\delta_1+\delta_2) w(t-1)
		\end{array}\right.
	\end{equation}
	where $\delta_1$ and $\delta_2$ are respectively confined to $|\delta_1| \leq 0.1$ and $|\delta_2| \leq 0.25$, and its associated characteristic matrix:
	\begin{equation}
	\label{eq:illustration_link3_SDEP}
	\begin{bmatrix}
	\lambda & 0 & 0 \\
	0 & 0 & 0 \\
	0 & 0 & 0
	\end{bmatrix}-
	\begin{bmatrix}
	-3+\delta_2 & 4 & 0 \\
	0 & -1 & \Delta \\
	-2-3\delta_1+2\delta_2 & 3+\delta_1 & -1
	\end{bmatrix}-
	\begin{bmatrix}
	-1+3\delta_1 & 0 & 0 \\
	0 & 0 & 0 \\
	0 & 1+\delta_1+\delta_2 & 0
	\end{bmatrix}e^{-\lambda}\text{.}
	\end{equation}
	As before, we first examine the robust structured complex distance to instability of \eqref{eq:illustration_link3_SDEP}. The robust structured complex distance to non well-posedness follows from Proposition~\ref{lem:expression_dist_nwp}:
	\[\dist_{NWP} = \left(\max_{|\delta_1| \leq 0.1} |3+\delta_1|\right)^{-1} = 1/3.1 = 0.3226\text{.}\]
	The robust structured complex distances to a characteristic root chain crossing and finite root crossing follow from Figure~\ref{fig:illustration_link3_psa}:
	\[
	\begin{aligned}
	\dist_{CHAIN}(\hat{\delta}) &= 0.2247 \\
	\dist_{FIN}(\hat{\delta}) &= +\infty \text{.}
	\end{aligned}
	\]
	Hence, the robust structured complex distance to instability is equal to $0.2247$ and the loss of strong stability is caused by a chain of characteristic roots whose vertical asymptote moves into the closed right-half plane. This is illustrated in Figure~\ref{fig:illustration_link3_spectrum} which shows the spectrum of \eqref{eq:illustration_link3_SDEP} for the perturbations associated with $\alpha_{D,s}^{\mathrm{ps}}(\hat{\delta},\epsilon,\vec{\tau})$ and $\epsilon$ smaller than, equal to and larger than the robust structured complex distance to instability.
	
	Figure~\ref{fig:illustration_link3_supT} shows the worst-case gain function of system \eqref{eq:illustration_link3_system}. In contrast to the previous example, it attains its maximal value (of $4.45$) only at infinity, ie. $\|T(\cdot;\cdot,\vec{\tau})\|_{\mathcal{H}_{\infty}}^{\hat{\delta}} = \|T_a(\cdot;\cdot,\vec{\tau})\|_{\mathcal{H}_{\infty}}^{\hat{\delta}}$. Furthermore, as in the previous example $\|T_a(\cdot;\cdot,\vec{\tau})\|_{\mathcal{H}_{\infty}}^{\hat{\delta}}$ and \[
	\begin{aligned}
	|||T_a(\cdot;\cdot,\vec{\tau})|||_{\mathcal{H}_{\infty}}^{\hat{\delta}} &= \max_{\substack{|\delta_1|\leq 0.1 \\ |\delta_2|\leq 0.25}} \max_{\theta \in [0,2\pi)} \left|3+\delta_1+(1+\delta_1+\delta_2) e^{j\theta}\right| \\
	&= 4.45 \\
	\Big(&= \min\left\{\dist_{NWP}(\hat{\delta}),\dist_{CHAIN}(\hat{\delta})\right\}^{-1}\Big)
	\end{aligned}
	\] coincide. Hence, the robust strong $\mathcal{H}_{\infty}$-norm equals 4.45 and corresponds to the robust (strong) asymptotic $\mathcal{H}_{\infty}$-norm.
			\begin{figure}[!hbtp]
			\centering
			\begin{minipage}[t]{0.48\textwidth}
				\centering
				\includegraphics[width=\linewidth]{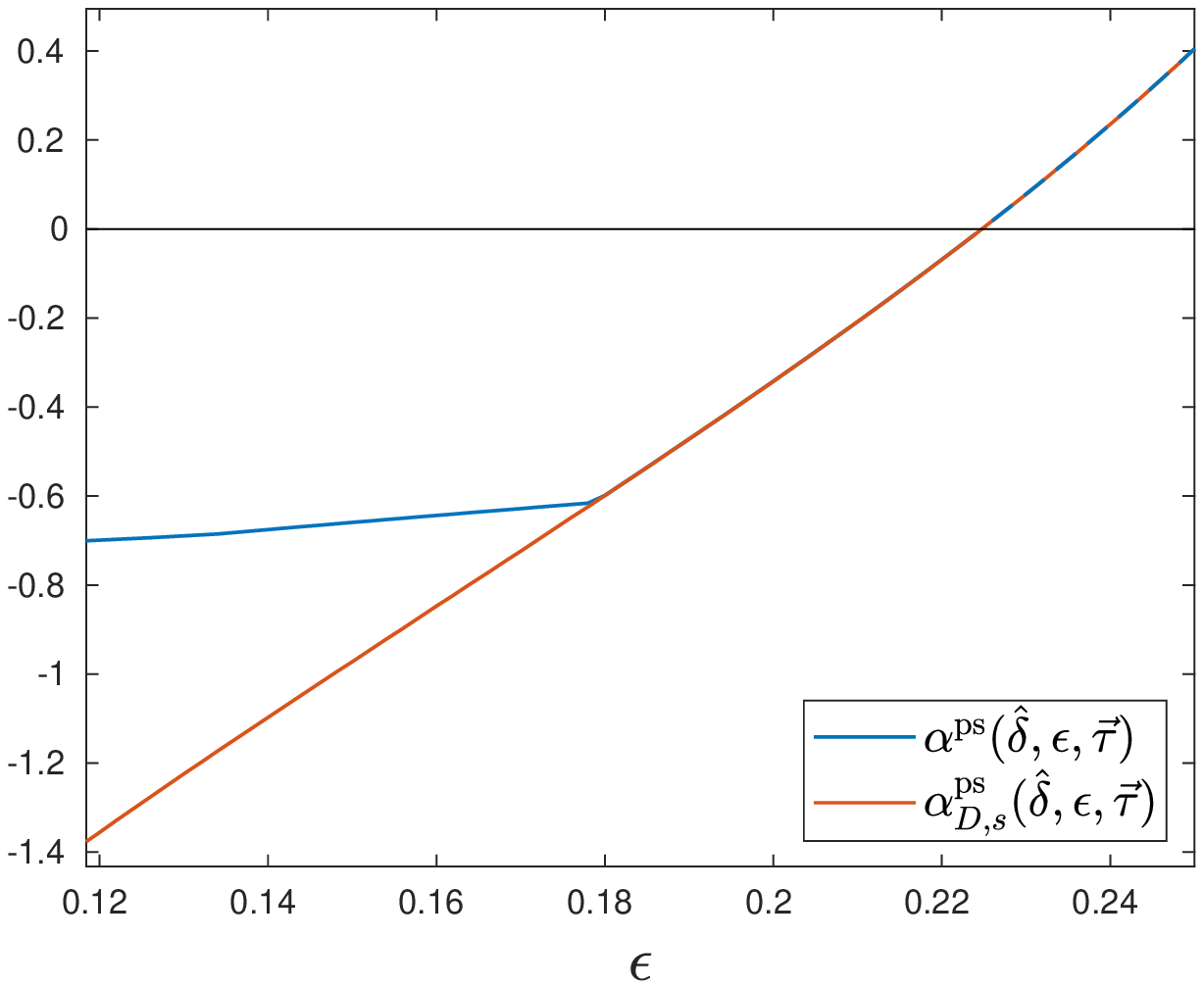}
				\captionsetup{width=.9\linewidth}
				\caption{$\alpha^{\mathrm{ps}}(\hat{\delta},\epsilon,\vec{\tau})$ (blue) and $\alpha_{D,s}^{\mathrm{ps}}(\hat{\delta},\epsilon,\vec{\tau})$ (red) of characteristic matirx \eqref{eq:illustration_link3_SDEP} in function of $\epsilon$.}
				\label{fig:illustration_link3_psa}
			\end{minipage}
			\begin{minipage}[t]{0.48\linewidth}
				\centering
				\includegraphics[width=\linewidth]{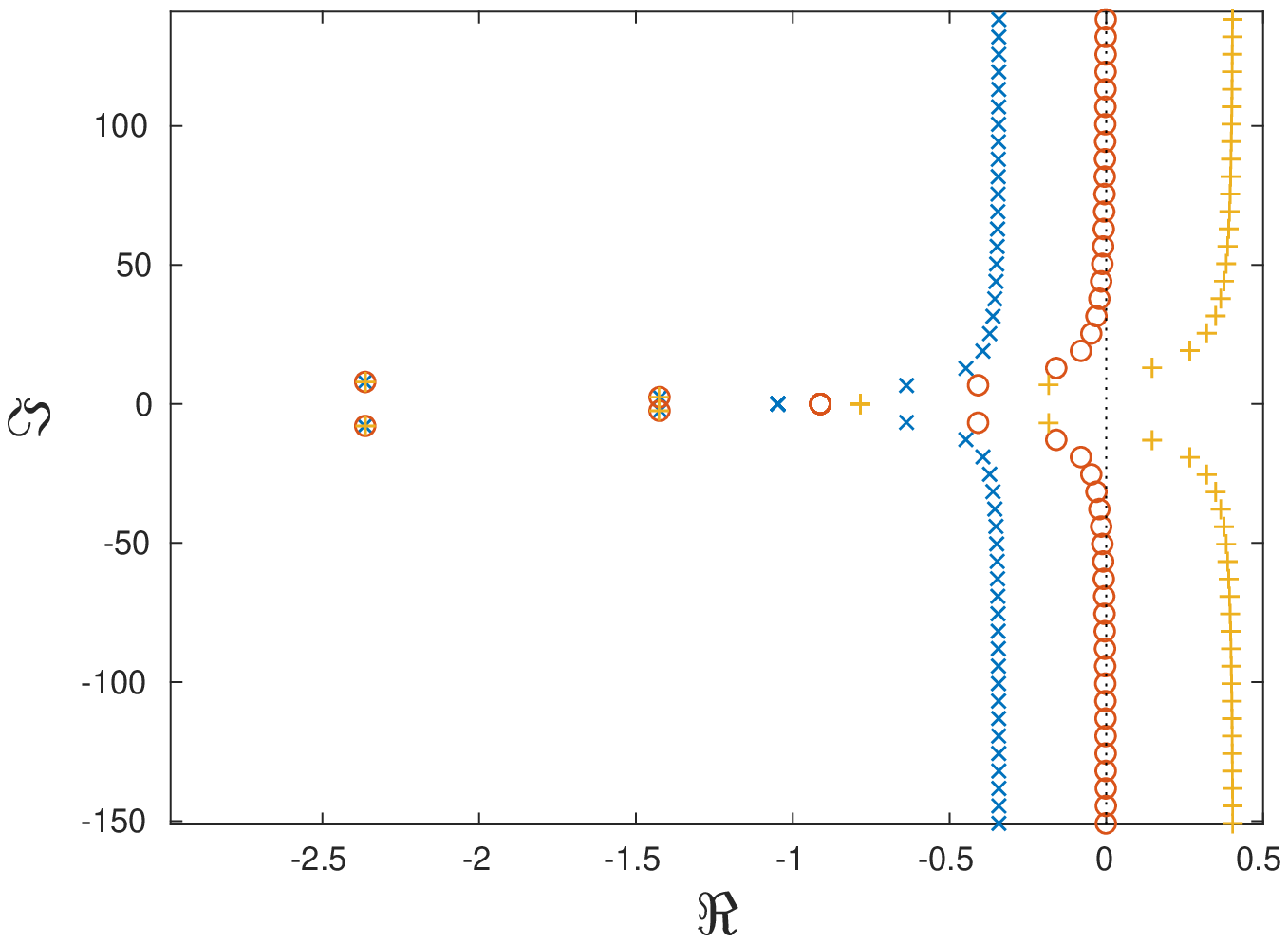}
				\captionsetup{width=.9\linewidth}
				\caption{The spectrum of \eqref{eq:illustration_link3_SDEP} for the perturbations associated with $\alpha_{D,s}^{\mathrm{ps}}(\hat{\delta},\epsilon,\vec{\tau})$ and $\epsilon$ equal to 0.2 (blue x), $\dist_{INS}(\hat{\delta})$ (red o) and 0.25 (yellow +).}
				\label{fig:illustration_link3_spectrum}
			\end{minipage}\hfill
		\end{figure}
	\begin{figure}
				\centering
				\includegraphics[width=0.6\linewidth]{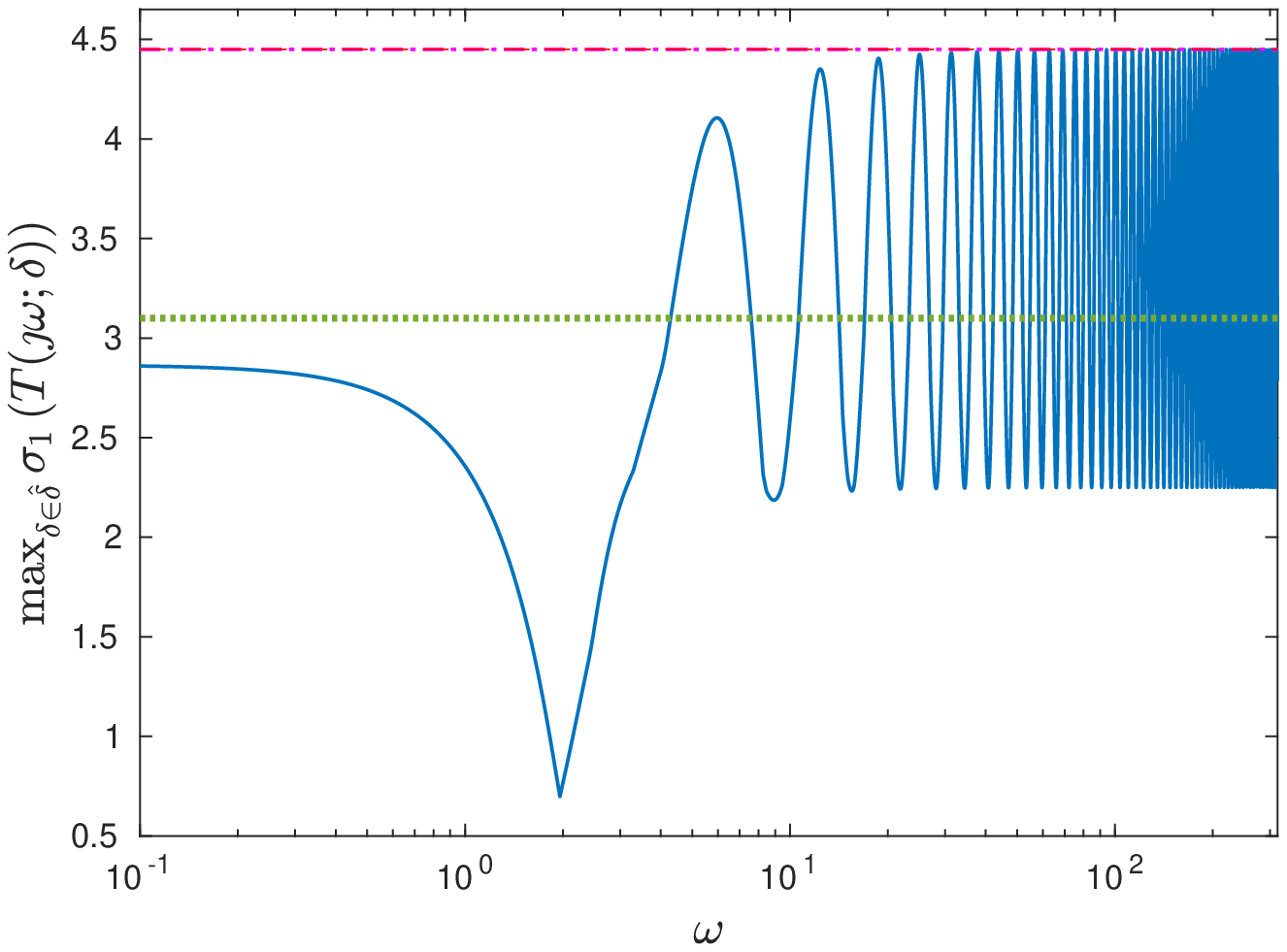}
				\captionsetup{width=\linewidth}
				\caption{The worst-case gain function of system~\eqref{eq:illustration_link3_system} (blue), $||T(\cdot;\cdot,\vec{\tau})||_{\mathcal{H}_{\infty}}^{\hat{\delta}}$ (magenta dot-dashed), $|||T_a(\cdot;\cdot,\vec{\tau})|||_{\mathcal{H}_{\infty}}^{\hat{\delta}}$ (=$\|T_a(\cdot;\cdot,\vec{\tau})\|_{\mathcal{H}_{\infty}}^{\hat{\delta}}$) (red dashed) and $\dist_{NWP}(\hat{\delta})^{-1}$ (green dotted).}
				\label{fig:illustration_link3_supT}
			\end{figure}
	\end{example}
	
	\begin{example}
		\label{example:strong_chain_crossing}
		As third and last example we consider the following uncertain system \big(whose nominal model corresponds to \eqref{eq:example0_system_description}\big):
		\begin{equation}
		\label{eq:illustration_link_1_system}
		\left\{ 
		\begin{array}{rcl}
		\dot{x}(t) & = & (\minus 2+\delta_1) x(t)  +(1+\delta_2)x(t-1)  \minus w(t)  + (\minus 0.5+\delta_1) w(t-2)\\ [2ex]
		z(t) & = & (\minus 2+2\delta_2) x(t)  +  x(t-2)  + (5+4\delta_1)w(t)  + 1.5 w(t-1) +(\minus 3+\delta_1)w(t-2)
		\end{array}
		\right.
		\end{equation}
		where $\delta_1$ and $\delta_2$ are confined to $|\delta_1|\leq 0.2$ and $|\delta_2| \leq 0.3$, and its associated characteristic matrix:
		\begin{equation}
		\label{eq:illustration_link_1_SDEP}
		\begin{bmatrix}
		\lambda & 0 & 0 \\
		0 & 0 & 0 \\
		0 & 0 & 0
		\end{bmatrix}
		-
		\begin{bmatrix}
		\minus 2+\delta_1 & \minus 1 & 0 \\
		0  & \minus 1 & \Delta \\
		\minus 2+2\delta_2 & 5+4\delta_1 & \minus 1
		\end{bmatrix}
		-
		\begin{bmatrix}
		1+\delta_2 & 0 & 0 \\
		0 & 0 & 0 \\
		0 & 1.5 & 0  
		\end{bmatrix} e^{- \lambda}
		-
		\begin{bmatrix}
		0 & \minus 0.5+\delta_1 & 0\\
		0 & 0 & 0 \\
		1 & \minus 3 + \delta_1 & 0 
		\end{bmatrix} e^{-2 \lambda} \text{.}
		\end{equation}
		We start again with characterising the robust structured complex distance to instability. From Proposition~\ref{lem:expression_dist_nwp} it follows that 
		\[ \dist_{NWP}(\hat{\delta}) = \Big(\max_{|\delta_1| \leq 0.2} |5+4\delta_1|\Big)^{-1} = 1/5.8=0.1724 \text{.}\]
	As seen in Figure~\ref{fig:illustration_link1_psa}, the zero-crossing of $\alpha^{\mathrm{ps}}(\hat{\delta},\epsilon,\vec{\tau})$ lies to the right of the zero-crossing of $\alpha_{D,s}^{\mathrm{ps}}(\hat{\delta},\epsilon,\vec{\tau})$, which means that only the robust structured complex distance to a characteristic root chain crossing is finite:
		\[
	\begin{aligned}
	\dist_{CHAIN}(\hat{\delta}) &= 1/10.1 = 0.0990\\
	\dist_{FIN}(\hat{\delta}) &= +\infty \text{.}
	\end{aligned}
	\]
	The robust structured complex distance to instability is thus again equal to the robust structured complex distance to a characteristic root chain crossing. But unlike the previous example, all points in the $(\hat{\delta},\dist_{INS}(\hat{\delta}))$-pseudo-spectrum of \eqref{eq:illustration_link_1_SDEP} lie bounded away from the imaginary axis as $\alpha^{\mathrm{ps}}(\hat{\delta},\dist_{INS}(\hat{\delta}),\vec{\tau}) < 0$. However, $\alpha_{D,s}^{\mathrm{ps}}(\hat{\delta},\dist_{INS}(\hat{\delta}),\vec{\tau}) = 0$ implies that there exist perturbations on the delays that can be chosen arbitrarily small such that the spectrum of \eqref{eq:illustration_link_1_SDEP} contains a chain of characteristic roots with a vertical asymptote in the closed right-half plane for some $\Delta$ with $\|\Delta\|_2 \leq \dist_{INS}(\hat{\delta})$ and some $\delta \in \hat{\delta}$. This is illustrated in Figure~\ref{fig:illustration_link_1_spectrum}. Consider the following perturbations, which are associated with the loss of strong stability:
	\begin{center}
		$\Delta = 1/10.1$, $\delta_1 = 0.2$ and $\delta_2 = 0$.
	\end{center}
	 Figure~\ref{fig:illustration_link_1_spectrum_1} shows the spectrum for the associated realisation of \eqref{eq:illustration_link_1_SDEP} for the nominal delays. In this case all characteristic roots lie bounded away from the imaginary axis. Figure~\ref{fig:illustration_link_1_spectrum_2} shows its spectrum for a small perturbations on the delays. Now we have a chain of characteristic roots with the imaginary axis as vertical asymptote. Furthermore,  it can be shown that this vertical asymptote exists for all $\vec{\tau} = (1,2+\pi/n)$ with $n \in \mathbb{N}$.

	Next we establish the link with the robust strong $\mathcal{H}_{\infty}$-norm of system \eqref{eq:illustration_link_1_system}. Figure~\ref{fig:illustration_link1_supT} shows its worst-gain function. This function attains its maximum of $9.404$ (indicated in magenta) at a finite frequency ($\omega = 1.525$) as the robust asymptotic $\mathcal{H}_{\infty}$-norm equals (indicated in yellow):
	\[
	\|T_a(\cdot;\cdot,\vec{\tau}\|_{\mathcal{H}_{\infty}}^{\hat{\delta}} = \max_{|\delta_1|\leq 0.2} \max_{\omega \in \mathbb{R}} |5+4\delta_1+1.5e^{\jmath \omega}+(-3+\delta_1)e^{2\jmath \omega} |= 8.7477\text{.}
	\]
	 However the robust \emph{strong} asymptotic $\mathcal{H}_{\infty}$-norm (indicated in red in Figure~\ref{fig:illustration_link1_supT}) is equal to:
	 \[
	 \begin{aligned}
	 |||T_a(\cdot;\cdot,\vec{\tau})|||_{\mathcal{H}_{\infty}}^{\hat{\delta}} &= \max_{|\delta_1|\leq 0.2} \max_{\vec{\theta} \in [0,2\pi)^{2}} |5+4\delta_1+1.5e^{\jmath\theta_1}+(-3+\delta_1) e^{\jmath\theta_2}| \\ &= 10.1 \\ \big(&=\dist_{CHAIN}(\hat{\delta})^{-1}\big)
	 \end{aligned}\text{,}\] 
which means that the robust strong $\mathcal{H}_{\infty}$-norm corresponds to the robust strong asymptotic $\mathcal{H}_{\infty}$-norm.

		\begin{figure}
				\centering
				\includegraphics[width=0.6\linewidth]{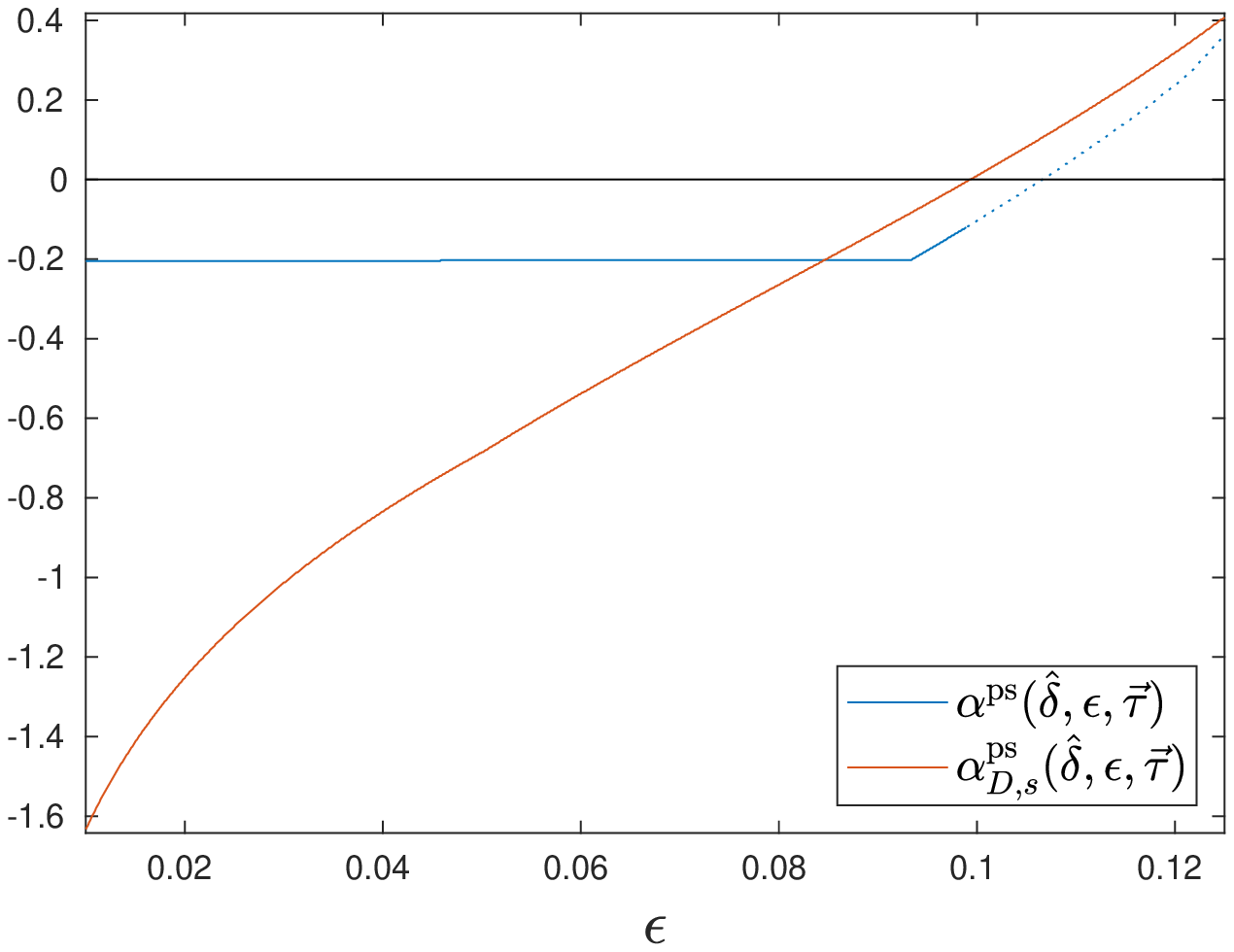}
				\caption{$\alpha^{\mathrm{ps}}(\hat{\delta},\epsilon,\vec{\tau})$ (blue) and $\alpha_{D,s}^{\mathrm{ps}}(\hat{\delta},\epsilon,\vec{\tau})$ (red) of characteristic matrix \eqref{eq:illustration_link_1_SDEP} in function of $\epsilon$.}
				\label{fig:illustration_link1_psa}
		\end{figure}
		\begin{figure}
		\centering
		\begin{subfigure}{0.48\linewidth}
		\centering
		\includegraphics[width=\linewidth]{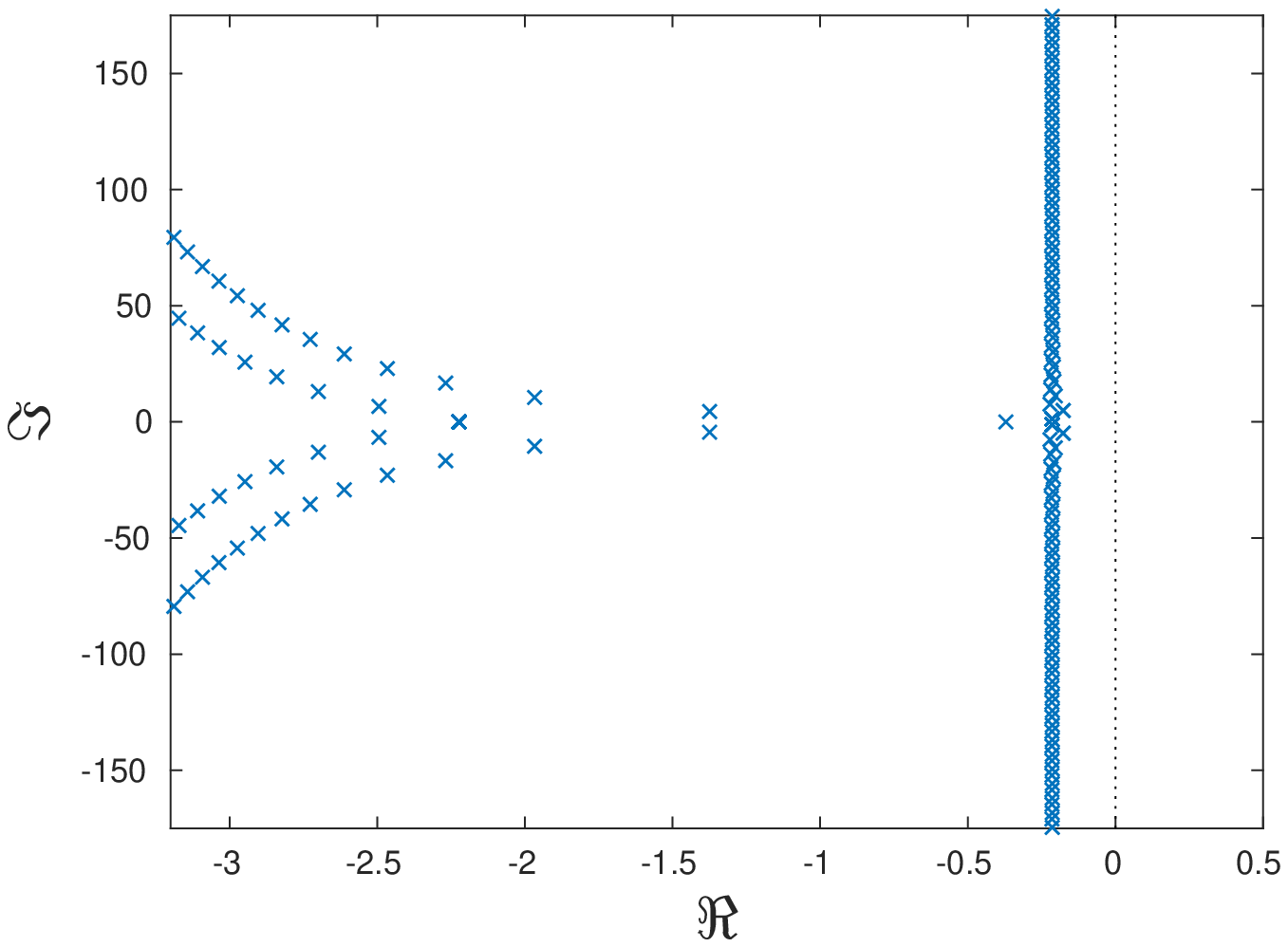}
		\caption{}
		\label{fig:illustration_link_1_spectrum_1}
		
		\end{subfigure}
		\begin{subfigure}{0.48\linewidth}
		\centering
		\includegraphics[width=\linewidth]{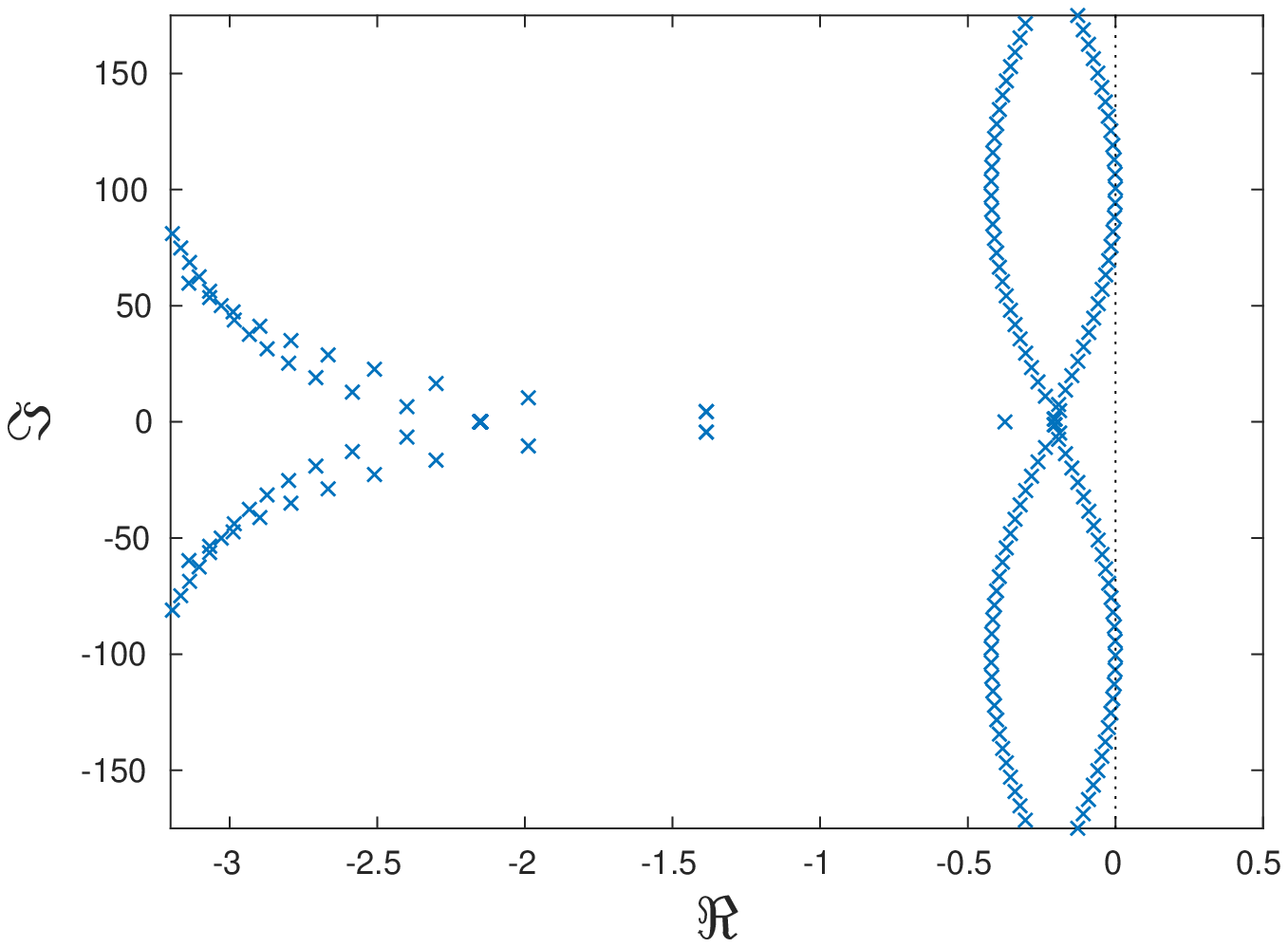}
		\caption{}
		\label{fig:illustration_link_1_spectrum_2}
		
		\end{subfigure}
		\caption{The spectrum of \eqref{eq:illustration_link_1_SDEP} for $\delta_1 = 0.2$, $\delta_2 = 0$, $\Delta = 1/10.1$ and $\vec{\tau} = (1,2)$ (left) and $\vec{\tau} = (1,2+\pi/100)$ (right)}
		\label{fig:illustration_link_1_spectrum}
		\end{figure}
		\begin{figure}[!hbtp]
			\centering
				\includegraphics[width=0.6\linewidth]{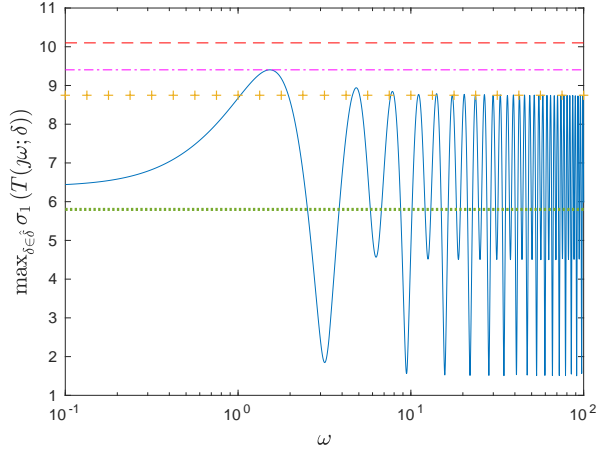}
				\captionsetup{width=.9\linewidth}
				\caption{The worst-case gain function of system~\eqref{eq:illustration_link_1_system} (blue), $||T(\cdot;\cdot,\vec{\tau})||_{\mathcal{H}_{\infty}}^{\hat{\delta}}$ (magenta dot-dashed), $\|T_a(\cdot;\cdot,\vec{\tau}\|_{\mathcal{H}_{\infty}}^{\hat{\delta}}$ (yellow +), $|||T_a(\cdot;\cdot,\vec{\tau})|||_{\mathcal{H}_{\infty}}^{\hat{\delta}}$ (red dashed) and $\dist_{NWP}(\hat{\delta})^{-1}$ (green dotted).}
				\label{fig:illustration_link1_supT}
		\end{figure}
	\end{example}
	In the previous examples we encountered three ways in which a characteristic matrix of form \eqref{eq:SDEP} can loose strong stability. In the first example the loss of strong stability was caused by a finite number of characteristic roots moving into the right-half plane. In this case the robust strong $\mathcal{H}_{\infty}$-norm of the associated system was equal to the maximum of the worst-case gain function. In the second example the loss of strong stability was caused by the asymptote of a chain of characteristic roots moving into the closed right-half plane. Now the robust strong $\mathcal{H}_{\infty}$-norm of the associated system was equal to the robust (strong) $\mathcal{H}_{\infty}$-norm of the asymptotic transfer function. In the last example the loss of strong stability was caused by the asymptote of a chain of characteristic roots moving into the closed half-plane, not for the nominal delay values but for infinitesimal delay perturbations. For this case the robust $\mathcal{H}_{\infty}$-norm and the robust strong $\mathcal{H}_{\infty}$-norm no longer coincided and the robust strong $\mathcal{H}_{\infty}$-norm was equal to the robust \emph{strong} asymptotic  $\mathcal{H}_{\infty}$-norm.
	\section{Numerical algorithm for computing the robust strong $\mathcal{H}_{\infty}$-norm}
	\label{sec:numerical_algorithm}
	This section introduces a high-level description of a numerical algorithm to compute the robust strong $\mathcal{H}_{\infty}$-norm of system \eqref{eq:system_description} using its relation with the robust structured complex distance to instability of characteristic matrix \eqref{eq:SDEP}.

	\begin{enumerate}
		\item[STEP 0] Check if Assumption~\ref{asm:robust_internally_stable} holds, ie. uncertain system \eqref{eq:system_description} is internally exponentially stable for all admissible perturbations, using the method presented in \cite{Borgioli2019}. 
		\item[STEP 1] Compute the robust strong asymptotic $\mathcal{H}_{\infty}$-norm by solving the following (constrained) optimisation problem:
		\[
		\begin{aligned}
		|||T_a(\cdot;\cdot,\vec{\tau})|||_{\mathcal{H}_{\infty}}^{\hat{\delta}} \
		=& \max_{\delta \in \hat{\delta}}\max_{\vec{\theta} \in [0,2\pi)^{K}} \sigma_1\left(\tilde{D}_0(\delta)+\sum_{k=1}^{K}\tilde{D}_k(\delta) e^{\jmath \theta_k}\right) \\
		\Big(=& \min\left\{\dist_{NWP}(\hat{\delta}), \dist_{CHAIN}(\hat{\delta})\right\}^{-1}\Big)\text{.}
		\end{aligned}
	\]
	Appendix~\ref{subsec:psafm_II} briefly explains how to solve this optimisation problem using the projected gradient flow method.

	\begin{remark}
	
	The robust structured complex distances to non well-posedness and a characteristic root chain crossing can also be computed separately, although by themselves they are not necessary to find the robust strong (asymptotic) $\mathcal{H}_{\infty}$-norm. The following expressions for these distance measures \big(for more general $Q$, $\tilde{P}_0(\delta,\Delta)$ and $\tilde{P}_k(\delta)$\big) follow from Section~\ref{sec:preliminaries}. The robust structured complex distance to non well-posedness is equal to the smallest $\epsilon$ for which the function 
		\[
		\mathbb{R}^{+} \ni \epsilon \mapsto \min_{\delta \in \hat{\delta}}\min_{\substack{\Delta \in \mathbb{C}^{m\times p}\\\|\Delta\|_2=\epsilon}}  \left\{\sigma_{\min}\left(U_{\mathcal{N}}^{H} \tilde{P}_0(\delta,\Delta) V_{\mathcal{N}}\right)\right\}\text{,}
		\]
		with $\sigma_{\min}(\cdot)$ the smallest singular value, equals zero. The robust structured complex distance to a characteristic root chain crossing is equal to the zero-crossing of 
		\[
		[0,\dist_{NWP}) \ni \epsilon \mapsto \max_{\delta \in \hat{\delta}} \max_{\substack{\Delta \in \mathbb{C}^{m\times p} \\ \|\Delta\|_2\leq\epsilon}} \max_{\vec{\theta}\in [0,2\pi)^{K}}\rho\left(\sum_{k=1}^{K} \left(U_{\mathcal{N}}^{H}\tilde{P}_0(\delta,\Delta)V_{\mathcal{N}}\right)^{\minus 1}U_{\mathcal{N}}^{H}\tilde{P}_{k}(\delta) V_{\mathcal{N}}e^{\jmath\theta_k} \right) \minus 1	
		\]
		where $\rho(\cdot)$ the spectral radius. 
		
		 In both cases one has find to find the zero(-crossing) of a function for which each function evaluation consists of solving an optimisation problem. This suggests a two-level approach: on the outer level  a root-finding method such as the Newton-bisection method, which combines the robustness of the bisection method with the fast (local) convergence of the Newton method (see \cite{PressWilliamH1996NriF} for a reference implementation), is used to find new estimates for the critical $\epsilon$; while on the inner level an optimisation method, such as the projected gradient flow method (see Appendix~\ref{sec:projected_steepest_ascend_flow}), is used to solve the (constrained) optimisation problem for a given $\epsilon$.
		\end{remark}
		
		\item[STEP 2] Compute the robust structured complex distance to finite root crossing by finding the zero-crossing of  \[
		\Big[0, \left(|||T_a(\cdot;\cdot,\vec{\tau})|||_{\mathcal{H}_{\infty}}^{\hat{\delta}}\right)^{-1} \Big) \ni \epsilon \mapsto \alpha^{\mathrm{ps}}(\hat{\delta},\epsilon,\vec{\tau})\text{,}
		\]
		with $\alpha^{\mathrm{ps}}(\hat{\delta},\epsilon,\vec{\tau})$ as defined in Remark~\ref{remark:pseudo_spectral_abscissa}. To find this zero-crossing once again a two-level approach is used. On the outer level the Newton-bisection method is used to find new estimates for $\epsilon$. While on the inner level the projected gradient flow method is used to compute $\alpha^{\mathrm{ps}}(\hat{\delta},\epsilon,\vec{\tau})$ for a given $\epsilon$.  The resulting flow and how to compute the derivative of $\alpha^{\mathrm{ps}}(\hat{\delta},\epsilon,\vec{\tau})$ with respect to $\epsilon$ (needed for the Newton-bisection method) will be outlined in Appendix~\ref{subsec:psafm_III}.

		\item[STEP 3] By Theorem \ref{the:main_theoretical_result} the robust strong $\mathcal{H}_{\infty}$-norm is equal to
		\[ 
		\begin{aligned}
		|||T(\cdot;\cdot,\vec{\tau})|||_{\mathcal{H}_{\infty}}^{\hat{\delta}} &= \min\left\{\dist_{NWP}(\hat{\delta}),\dist_{CHAIN}(\hat{\delta}),\dist_{FIN}(\hat{\delta})\right\}^{-1} \\
		&= \max\left\{|||T_a(\cdot;\cdot,\vec{\tau})|||_{\mathcal{H}_{\infty}}^{\hat{\delta}},\dist_{FIN}(\hat{\delta})^{-1}\right\} \text{.}
		\end{aligned}		
		\] 
	\end{enumerate}

	\section{Generalisations}
	\label{sec:generalisation}
	\subsection{Bounded uncertainties on delays}
	The presented theory and algorithm can easily be extended to systems with (bounded) uncertainties on both the coefficient matrices and the delays. Theorem \ref{the:main_theoretical_result} can be generalised to this case by incorporating the uncertainties on the delays in characteristic matrix family \eqref{eq:SDEP} and extending the definition of the robust structured complex distance to instability to also take these uncertainties into account. Furthermore as the robust strong asymptotic $\mathcal{H}_{\infty}$-norm, the robust structured complex distance to non well-posedness and the robust structured complex distance to a characteristic root chain crossing are independent of the delays, the proofs in Section~\ref{subsec:distance_nonwellposed} can be reused without modification. The results from Section~\ref{subsec:distance_finite_root} trivially generalise to the uncertain delay case by extending the definitions of the worst-case gain function and the robust structured complex distance to finite root crossing. Also the algorithm presented in Section~\ref{sec:numerical_algorithm} only need a minor modification: one has to include the uncertainties on the delays in the computation of the the pseudo-spectral abscissa in STEP 2.

	\subsection{System families described by delay-differential algebraic equations}
	The results can also be generalised to models described by (uncertain) delay-differential algebraic equations of the following form:
	\begin{equation}
	\label{eq:DDAE_system}
		\left\{
		\begin{array}{rcl}
		E \dot{x}(t) & = & \tilde{A}_0(\delta) x(t) + \sum_{k=1}^{K} \tilde{A}_k(\delta) x(t-\tau_k) + \tilde{B}_0(\delta) w(t) + \sum_{k=1}^K \tilde{B}_k(\delta) w(t-\tau_k) \\[1em]
		z(t) & = & \tilde{C}_0(\delta) x(t)+ \sum_{k=1}^K \tilde{C}_k(\delta) x(t-\tau_k) + \tilde{D}_0(\delta) w(t) + \sum_{k=1}^K \tilde{D}_k(\delta) w(t-\tau_k)
	\end{array}
	\right.		
	\end{equation}
	where the real-valued perturbations $\delta$ are confined to a specified set $\hat{\delta}$. In this formulation $x \in \mathbb{R}^{n}$ is the state vector, $w \in \mathbb{R}^{m}$ the exogenous input and $z \in \mathbb{R}^{p}$ the exogenous output, $E$ a real-valued, possibly singular, $n\times n$ matrix and $\delta$, $\hat{\delta}$, $\tilde{A}_k(\delta)$, $\tilde{B}_k(\delta)$, $\tilde{C}_k(\delta)$, $\tilde{D}_k(\delta)$ and $\tau_k$ as defined in Section~\ref{sec:introduction}. To avoid lack of causality and the occurrence of impulsive solutions, we assume that ${U_{E}}^{H}\tilde{A}_0(\delta)V_{E}$ is invertible for all $\delta \in \hat{\delta}$, with $U_E$ and $V_E$ $n \times \big(n-\rank(E)\big)$-dimensional matrices whose columns form a basis for respectively the left and right null space of $E$ \cite{Michiels2011}. 
	\begin{remark}
		We assume that the considered uncertain system has no uncertainties on $E$, as the matrix $E$ defines the structure of the differential and algebraic part of the equations and therefore typically does not contain parameters.
	\end{remark}
	
	Model class \eqref{eq:DDAE_system} can describe a wide variety of systems, even neutral systems can be reformulated in this form \cite{Michiels2011}. As a consequence the internal exponential stability of a realisation of the system~\eqref{eq:DDAE_system} is potentially sensitive to arbitrary small delay perturbations. Therefore we need to tighten Assumption~\ref{asm:robust_internally_stable} and assume that all admissible systems are \emph{strongly} internally exponentially stable. 
	
	Using similar derivations as in Section~\ref{sec:rob_strong_hinf_dist_instab}, it can be shown that the robust strong $\mathcal{H}_{\infty}$-norm of uncertain system \eqref{eq:DDAE_system} (under the aforementioned assumptions) is equal to the reciprocal of the robust structured complex distance to instability of \eqref{eq:SDEP} where the $Q$ matrix now has the following form:
	\[
	Q = \begin{bmatrix}
	E & 0 & 0 \\
	0 & 0 & 0 \\
	0 & 0 & 0
	\end{bmatrix} \text{.}
	\]

	Also the numerical algorithm presented in Section~\ref{sec:numerical_algorithm} can be extended to deal with uncertain delay-differential algebraic systems. In order to avoid the explicit computation of the robust structured complex distances to non well-posedness and a characteristic root chain crossing in STEP 1, an explicit expression for the asymptotic frequency response (and its  strong $\mathcal{H}_{\infty}$-norm) needs to be extracted first. Such expressions can be found in \cite[Equation~3.4 and Proposition~4.3]{Gumussoy2011}.
	\section{Examples}
	\label{sec:examples}
	An implementation of the algorithm described in Section~\ref{sec:numerical_algorithm} is available from \\ \url{http://twr.cs.kuleuven.be/research/software/delay-control/rb_hinf/}. To solve the constrained optimisation problems in steps 1 and 2 it uses the projected gradient flows presented in Appendix~\ref{sec:projected_steepest_ascend_flow}. The presented algorithm has also been validated on some test problems: Examples~\ref{example:finite_crossing},~\ref{example:chain_crossing}~and~\ref{example:strong_chain_crossing} in Section~\ref{subsec:central_theorem} and the open loop systems of the benchmark problems described in \cite[Section 7.3]{Gumussoy2011}\footnote{Available at \url{http://twr.cs.kuleuven.be/research/software/delay-control/hinfopt/}.} to which real-valued, structured uncertainties were added. These benchmark problems are available from the same location. 

	\section{Conclusion}
	\label{sec:conclusions}
In this paper we examined the relation between the robust (strong) $\mathcal{H}_{\infty}$-norm of a time-delay system with structured uncertainties and the robust structured complex distance to instability of an associated singular delay eigenvalue problem. We also introduced a novel numerical algorithm to compute this robust strong $\mathcal{H}_{\infty}$-norm. This robustness measure not only takes the considered perturbations on the system matrices into account, but  also infinitesimal perturbations on the delays. In this way a known fragility problem of the standard $\mathcal{H}_{\infty}$-norm, which might not continuously depend on the delay parameters, is eliminated. Theorem~\ref{the:main_theoretical_result} can be as seen as a extension of the well-known result by Hinrichsen and Pritchard that relates the {$\mathcal{H}_{\infty}$-norm} of a linear time-invariant system with the structured complex distance to instability of a perturbed eigenvalue problem \cite{Hinrichsen2005}, to systems with delays and real-valued uncertainties on the coefficient matrices.

In future work we plan to use the here presented method for the design of distributed controllers for interconnected networks of identical subsystems. As shown in \cite{Dileep2018}, for certain classes of networks this synthesis problem can be reformulated as a synthesis problem for a single subsystem with an additional parameter whose allowable values correspond to the spectrum of the adjacency matrix of the network.  By considering this parameter as an uncertainty that is bounded to a specified interval, the robust strong $\mathcal{H}_{\infty}$-norm of a single subsystem can be used to quantify the worst-case disturbance rejection of the complete network over all realisations of the network for which the eigenvalues are confined to this interval.  The here introduced algorithm to compute the robust strong H-infinity norm can thus be used as a building block of an algorithm for synthesizing robust controllers with favourable scalability properties in terms of the number of subsystems. 
\section*{Acknowledgements} This work was supported by the project C14/17/072 of the KU Leuven Research Council and by the project G0A5317N of the Research Foundation-Flanders (FWO - Vlaanderen). 
	\begin{appendices}
		\section{Projected gradient flow method}
		\label{sec:projected_steepest_ascend_flow}
		
		The projected gradient flow method is a continuous variant of the well-known steepest ascend/descend method for solving constrained optimisation problems. It looks for a flow, described by ordinary differential equations, along which the objective function monotonically increases/decreases. The flow is defined in such a way that the (local) optima of the objective function appear as attractive stationary points. These optimisers are found by discretising the flow (using for example Euler's forward method).
		
		There already exists an extensive literature \cite{Borgioli2019,Guglielmi2013,Guglielmi2014} on how to use the projected gradient flow method for computing extremal points of pseudo-spectra.  We will therefore restrict ourself to the resulting flows for the optimisation problems encountered in Section~\ref{sec:numerical_algorithm}. For more details we refer to the aforementioned papers. 
		
		\subsection{Step 1}
		\label{subsec:psafm_II}
		This subsection briefly describes how to use the projected gradient flow method for the optimisation problem encountered in STEP 1 of the algorithm described in Section~\ref{sec:numerical_algorithm}: 
		\begin{equation}
		\label{eq:maximisation_problem_step2}
		\begin{array}{cccc}
		|||T_a(\cdot;\cdot,\vec{\tau})|||_{\mathcal{H}_{\infty}}^{\hat{\delta}} =& 
		\underset{\delta,\vec{\theta}}{\text{maximise}}
		& & \sigma_{1}\left(\tilde{D}_0(\delta)+\sum_{k=1}^{K} \tilde{D}_k(\delta) e^{\jmath\theta_k} \right) \\
		&
		\text{subject to}
		& & \delta \in \hat{\delta} \\ 
		& & & \vec{\theta} \in [0,2\pi)^{K} \text{.}
		\end{array}
		\end{equation}
		To solve this maximisation problem we construct a path in the search space along which the objective function monotonically increases:
		\[
		\begin{aligned}
		\delta_l(\mathsf{t}) &= \bar{\delta}_l\delta^{n}_{l}(\mathsf{t}) \text{ with $\|\delta^{n}_{l}(\mathsf{t})\|_F\leq1$,} & l=1,\dots,L\\ 
		\theta_k(\mathsf{t}) &= \modolo(\vartheta_k(\mathsf{t}),2\pi), & k=1,\dots K 
		\end{aligned}
		\]
		with $\modolo(\cdot,\cdot)$ the modulo operator and 
		\[
		\left\{
		\begin{aligned}
		\dot{\vartheta}_k(\mathsf{t}) &= -\Im\left(u(\mathsf{t})^{H}\tilde{D}_k(\delta(\mathsf{t})) v(\mathsf{t}) e^{\jmath \theta_k(\mathsf{t})}\right)\\
		 \Xi_l(\mathsf{t})&= \bar{\delta}_l\sum_{k=0}^{K}\sum_{s=1}^{S_l^{D_k}}{G_{l,s}^{D_k}}^{T}\Re\left(u(\mathsf{t})v(\mathsf{t})^{H}e^{-\jmath\theta_k(\mathsf{t})}\right){H_{l,s}^{D_k}}^{T} \\
		\dot{\delta}^{n}_l(\mathsf{t}) & =  \begin{cases}
		\Xi_l(\mathsf{t}) - \Big\langle\delta^{n}_l(\mathsf{t}),\Xi_l(\mathsf{t})\Big\rangle_F \delta^{n}_l(\mathsf{t}) & \text{~if~} \|\delta^n_l(\mathsf{t})\|_F = 1 \text{~and~} \Big\langle\delta^{n}_l(\mathsf{t}),\Xi_l(\mathsf{t})\Big\rangle_F >0 \\
		\Xi_l(\mathsf{t}) &  \text{~otherwise~}
		\end{cases}
		\end{aligned}\right.
		\]
		where $u(\mathsf{t})$ and $v(\mathsf{t})$ are the left and right singular vectors (of unit norm) associated with the largest singular value of $\tilde{D}_0(\delta(\mathsf{t})) + \sum_{k=1}^{K}\tilde{D}_k(\delta(\mathsf{t}))e^{\jmath\theta_k(\mathsf{t})}$ and $\langle A,B \rangle_F = \sum_{i,j}A_{i,j}B_{i,j}$. Note that this path can be seen as the projection of the derivative of the largest singular value of $\tilde{D}_0(\delta)+\sum_{k=1}^{K} \tilde{D}_k(\delta) e^{\jmath\theta_k}$ with respect to respectively $\theta_k$ and the elements of $\delta_l$ onto the search space. The projection ensures that the constraints of the optimisation problem are fulfilled for all $\mathsf{t}$.
		\begin{remark} Optimisation problem \eqref{eq:maximisation_problem_step2} is highly non-convex (especially with respect to $\theta$). To improve the chance of finding the global optimum, one needs to restart the projected gradient flow method with several initialisations of the variables.
		\end{remark}
		\subsection{Step 2}
		\label{subsec:psafm_III}
		In this subsection we briefly describe the usage of the projected gradient flow method for the optimisation problem encountered in STEP 2 of the algorithm described in Section~\ref{sec:numerical_algorithm}: 
		\begin{equation}
		\label{eq:maximisation_problem_step3}
		\begin{array}{cccc}
		\alpha^{\mathrm{ps}}(\hat{\delta},\epsilon,\vec{\tau}) =  &
		\underset{\delta,\Delta}{\text{maximise}}
		& & \Re\big(\lambda_{RM}(\delta,\Delta)\big) \\
		& \text{subject to}
		& & \delta \in \hat{\delta} \\
		& & & \Delta \in \mathbb{C}^{m \times p} \\
		& & & \|\Delta\|_2 \leq \epsilon
		\end{array}
		\end{equation}
		with $\lambda_{RM}(\delta,\Delta)$ the right-most eigenvalue of $M(\lambda;\delta,\Delta,\vec{\tau})$.\\
		\begin{remark} The maximum of \eqref{eq:maximisation_problem_step3} might not be attained, as $M(\lambda;\delta,\Delta,\vec{\tau})$ might be neutral for $\Delta \neq 0$. To guarantee that the maximum of \eqref{eq:maximisation_problem_step3} is defined, we add an additional constraint to the optimisation problem: 
		\[\lambda_{RM}(\delta,\Delta) \in \left\{ \lambda \in \mathbb{C} : \Im(\lambda) \in \left[-\bar{\lambda},\bar{\lambda}\right]\right\}\] with $\bar{\lambda}$ sufficiently large. This may lead to an underestimate for $\alpha(\hat{\delta},\epsilon,\vec{\tau})$ for a given $\epsilon$. However for $\epsilon \in \left[0,\min\{\dist_{NWP}(\hat{\delta}),\dist_{CHAIN}(\hat{\delta})\}\right)$, the transition to a positive $(\hat{\delta},\epsilon)$-pseudo-spectral abscissa is caused by a (finite) characteristic root crossing the imaginary axis, and thus if $\bar{\lambda}$ is sufficiently large this additional constraint does not influence the result of the overall root finding procedure in STEP 2 of the algorithm.
		\end{remark}

		The following proposition allows us to restrict the search space for $\Delta$ and hence improve the computational efficiency.
		\begin{proposition}
			If $\lambda^{\star}$ does not lie in the ($\hat{\delta}$,0)-pseudo spectrum of \eqref{eq:SDEP} and is a (local) maximum of \eqref{eq:maximisation_problem_step3} for $\epsilon > 0$ with associated optimisers $\delta$ and $\Delta$, then there exists a rank 1-matrix $\Delta_{1} \in \mathbb{C}^{m\times p}$ with $\|\Delta_{1}\|_2 = \epsilon$ such that $\lambda^{\star}$ is preserved.
		\end{proposition}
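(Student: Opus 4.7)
The plan is to apply Lemma~\ref{lem:relation_tf_ps_SDEP} at the optimum, first to extract from local maximality the sharp condition $\sigma_1(T(\lambda^{\star};\delta,\vec{\tau}))=\epsilon^{-1}$ on the transfer function at $\lambda^{\star}$, and then to use its constructive ``if'' direction to write down an explicit rank-one perturbation of the required norm. The hypothesis that $\lambda^{\star}$ lies outside the $(\hat{\delta},0)$-pseudo-spectrum guarantees that $I\lambda^{\star}-\tilde{A}_0(\delta)-\sum_{k=1}^{K}\tilde{A}_k(\delta)e^{-\lambda^{\star}\tau_k}$ is invertible for the optimising $\delta$, so that $T(\lambda^{\star};\delta,\vec{\tau})$ is well-defined. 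Feasibility in \eqref{eq:maximisation_problem_step3} gives $\|\Delta\|_2\le\epsilon$ and $\det M(\lambda^{\star};\delta,\Delta,\vec{\tau})=0$, and Lemma~\ref{lem:relation_tf_ps_SDEP} then forces
\[
\sigma_1(T(\lambda^{\star};\delta,\vec{\tau}))\ge\|\Delta\|_2^{-1}\ge\epsilon^{-1}.
\]

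Next I would upgrade this inequality to an equality by exploiting local maximality. Assume, for contradiction, that $\sigma_1(T(\lambda^{\star};\delta,\vec{\tau}))>\epsilon^{-1}$. Since $T(\cdot;\delta,\vec{\tau})$ is analytic near $\lambda^{\star}$ and its largest singular value is continuous, the strict inequality $\sigma_1(T(\lambda;\delta,\vec{\tau}))>\epsilon^{-1}$ persists on an open disk $\mathcal{D}$ around $\lambda^{\star}$. Picking any $\tilde{\lambda}\in\mathcal{D}$ with $\Re(\tilde{\lambda})>\Re(\lambda^{\star})$, the constructive part of Lemma~\ref{lem:relation_tf_ps_SDEP} produces $\tilde{\Delta}=\sigma_1(T(\tilde{\lambda};\delta,\vec{\tau}))^{-1}\tilde{v}\tilde{u}^{H}$ with $\|\tilde{\Delta}\|_2<\epsilon$ such that $\tilde{\lambda}$ is a characteristic root of $M(\cdot;\delta,\tilde{\Delta},\vec{\tau})$. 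By continuity of the singular triplet of $T(\cdot;\delta,\vec{\tau})$ in $\tilde{\lambda}$ and of the right-most eigenvalue on the interior of the feasible set, $\tilde{\Delta}$ may be taken arbitrarily close to $\sigma_1(T(\lambda^{\star};\delta,\vec{\tau}))^{-1}vu^{H}$ while maintaining $\Re(\lambda_{RM}(\delta,\tilde{\Delta}))\ge\Re(\tilde{\lambda})>\Re(\lambda^{\star})$, contradicting local maximality of $(\delta,\Delta)$.

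Having established $\sigma_1(T(\lambda^{\star};\delta,\vec{\tau}))=\epsilon^{-1}$, let $u$ and $v$ be unit-norm left and right singular vectors of $T(\lambda^{\star};\delta,\vec{\tau})$ associated with $\sigma_1$, and set
\[
\Delta_1:=\sigma_1(T(\lambda^{\star};\delta,\vec{\tau}))^{-1}\,v\,u^{H}\;=\;\epsilon\,v\,u^{H}.
\]
Then $\rank(\Delta_1)=1$ and $\|\Delta_1\|_2=\epsilon$, and the final assertion of Lemma~\ref{lem:relation_tf_ps_SDEP} yields that $\lambda^{\star}$ is a characteristic root of $M(\cdot;\delta,\Delta_1,\vec{\tau})$, which is precisely the preservation property claimed.

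The main obstacle is the argument in the second paragraph: Lemma~\ref{lem:relation_tf_ps_SDEP} only produces \emph{some} perturbation realising $\tilde{\lambda}$ as an eigenvalue, whereas local maximality controls only perturbations close to the given $(\delta,\Delta)$. Bridging the two pictures relies on the explicit, continuous dependence of the constructive choice $\tilde{\Delta}$ on $\tilde{\lambda}$ together with continuity of the right-most eigenvalue in $\Delta$—which in turn requires the right-most eigenvalue to be simple at the pertinent realisations (a mild nondegeneracy that, since $\lambda^{\star}$ is outside the nominal pseudo-spectrum, can be restored by an arbitrarily small admissible perturbation if it should fail).
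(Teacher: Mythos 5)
Your proof is correct in substance and follows the same route as the paper: invoke Lemma~\ref{lem:relation_tf_ps_SDEP} at the optimum, show that $\sigma_1\big(T(\lambda^{\star};\delta,\vec{\tau})\big)=\epsilon^{-1}$ exactly, and then read off the rank-one perturbation $\Delta_1=\epsilon\, v u^{H}$ from the constructive part of that lemma. The only real divergence is in the middle step, and it is precisely the spot you flag as the main obstacle. The paper avoids your $(\delta,\Delta)$-level contradiction argument entirely: it first uses Lemma~\ref{lem:relation_tf_ps_SDEP} to rewrite the whole set $\Lambda^{\mathrm{ps}}(\hat{\delta},\epsilon,\vec{\tau})$ of \eqref{eq:combined_real_complex_pseudospectrum} as $\Lambda^{\mathrm{ps}}(\hat{\delta},0,\vec{\tau})$ united with the superlevel set $\{s\notin\Lambda^{\mathrm{ps}}(\hat{\delta},0,\vec{\tau}):\max_{\delta\in\hat{\delta}}\sigma_1(T(s;\delta,\vec{\tau}))\geq\epsilon^{-1}\}$, and then interprets ``local maximum of \eqref{eq:maximisation_problem_step3}'' as ``$\lambda^{\star}$ is a local right-most point of this set.'' Under that reading, a strict inequality $\max_{\delta}\sigma_1(T(\lambda^{\star};\delta,\vec{\tau}))>\epsilon^{-1}$ is excluded immediately by continuity of $s\mapsto\max_{\delta}\sigma_1(T(s;\delta,\vec{\tau}))$, because points just to the right of $\lambda^{\star}$ would then also belong to the pseudo-spectrum; there is no need to exhibit a perturbation $\tilde{\Delta}$ close to the original $\Delta$, so the continuity-of-the-singular-triplet and simplicity-of-the-right-most-eigenvalue machinery in your second paragraph is unnecessary. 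If one insists on your stricter, parameter-space notion of local maximality, your bridging argument does carry a genuine residual gap (the realising perturbation $\sigma_1^{-1}vu^{H}$ need not be near the given optimiser $\Delta$), but that gap is an artefact of the stronger reading rather than of the paper's statement, which your proof otherwise establishes by the same means.
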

		\begin{proof}
		It follows from Lemma~\ref{lem:relation_tf_ps_SDEP}, that
		\[
		\Lambda^{\mathrm{ps}}(\hat{\delta},\epsilon,\vec{\tau}) = \Lambda^{\mathrm{ps}}(\hat{\delta},0,\vec{\tau}) \bigcup \left\{s\in\mathbb{C}\setminus \Lambda^{\mathrm{ps}}(\hat{\delta},0,\vec{\tau}) : \max_{\delta \in \hat{\delta}}\sigma_1\left(T(s;\delta,\vec{\tau})\right) \geq \epsilon^{-1} \right\}
		\]
		with $\Lambda^{\mathrm{ps}}(\hat{\delta},\epsilon,\vec{\tau})$ as defined in \eqref{eq:combined_real_complex_pseudospectrum}. Furthermore, because $\lambda^{\star}$ is a (local) right-most point of the aforementioned pseudo-spectrum and is not in $\Lambda^{\mathrm{ps}}(\hat{\delta},0,\vec{\tau})$, it must lie in
		\[
		\left\{s\in\mathbb{C}\setminus \Lambda^{\mathrm{ps}}(\hat{\delta},0,\vec{\tau}) : \max_{\delta \in \hat{\delta}}\sigma_1(T(s;\delta,\vec{\tau})) = \epsilon^{-1} \right\}\text{.}
		\]
		By the second part of Lemma~\ref{lem:relation_tf_ps_SDEP} it follows that $\lambda^{\star}$ is a characteristic root of $M(\lambda;\delta,\Delta_{1},\vec{\tau})$ where $\Delta_{1} = \epsilon v u^{H}$ with $u$ and $v$ the (normalised) left and right singular vectors of $T(\lambda^{\star};\delta,\vec{\tau})$ associated with the singular value $\epsilon$.
		\end{proof}
		
		Based on this result, we define the following path, for which the optimizers of \eqref{eq:maximisation_problem_step3} appear as (attractive) stationary points:
		\[
		\begin{aligned}
		\delta_l(\mathsf{t}) &= \bar{\delta}_l \delta_l^{n}(\mathsf{t}) \text{ with $\|\delta_l^{n}(\mathsf{t})\|_F \leq 1$}, &l=1,\dots,L \\
		\Delta(\mathsf{t}) &= \epsilon u(\mathsf{t})v(\mathsf{t})^{H} \text{ with $\|u(\mathsf{t})\|_2 = \|v(\mathsf{t})\|_2 = 1$}
		\end{aligned}
		\]
		with
		\[
		\begin{aligned}
		\dot{u}(\mathsf{t}) &= \frac{\epsilon}{\xi(\mathsf{t})}\Big(\left(I-u(\mathsf{t})u(\mathsf{t})^{H}\right) R^{T}\phi(\mathsf{t})\psi(\mathsf{t})^{H}S^{T}v(\mathsf{t}) + \frac{\jmath}{2}\Im\left(u(\mathsf{t})^{H}R^{T}\phi(\mathsf{t}) \psi(\mathsf{t})^{H}Sv(\mathsf{t})\right)u(\mathsf{t})\Big)\\
		\dot{v}(\mathsf{t}) &= \frac{\epsilon}{\xi(\mathsf{t})}\Big(\left(I-v(\mathsf{t})v(\mathsf{t})^{H}\right)S\psi(\mathsf{t})\phi(\mathsf{t})^{H}Ru(\mathsf{t}) + \frac{\jmath}{2}\Im\left(v(\mathsf{t})^{H}S\psi(\mathsf{t}) \phi(\mathsf{t})^{H}Ru(\mathsf{t})\right)v(\mathsf{t})\Big)\\
		\Xi_l(\mathsf{t}) &= \frac{\bar{\delta}_l}{\xi(\mathsf{t})} \sum_{k=0}^{K}\sum_{s=1}^{S_l^k}{G_{l,s}^{k}}^{T}\Re\left(\phi(\mathsf{t})\psi(\mathsf{t})^{H}e^{-\overline{\lambda(\mathsf{t})}\tau_k}\right){H_{l,s}^{k}}^{T} \\
		\dot{\delta}^{n}_{l}(\mathsf{t}) &= \begin{cases}
		\Xi_l(\mathsf{t}) - \Big\langle\delta^n_l(\mathsf{t}),\Xi_l(\mathsf{t})\Big\rangle_F \delta_l^{n}(\mathsf{t}) & \text{~if~} \|\delta^{n}_l(\mathsf{t})\|_F = 1 \text{~and~} \Big\langle\delta^{n}_l(\mathsf{t}),\Xi_l(\mathsf{t})\Big\rangle_F >0 \\
		\Xi_l(\mathsf{t}) & \text{~otherwise~}
		\end{cases}
		\end{aligned}	
		\]
		with $\phi(\mathsf{t})$ and $\psi(\mathsf{t})$ the left and right eigenvectors associated with $\lambda_{RM}(\delta(\mathsf{t}),\Delta(\mathsf{t}))$ normalised such that $\xi(\mathsf{t}) = \phi(\mathsf{t})^{H}\left(Q+\sum_{k=1}^{K}\tilde{P}_k(\delta(\mathsf{t}))\tau_{k}e^{-\tau_k \lambda_{RM}(\delta(\mathsf{t}),\Delta(\mathsf{t}))})\right)\psi(\mathsf{t})$ is real and positive.
		\begin{remark}
			This path can be seen as a combination of the results in \cite{Guglielmi2013} and \cite{Borgioli2019}.
		\end{remark}
		\begin{remark}
			The right-hand sides of the last two equations can be interpreted as the projection of the derivative of $\lambda_{RM}(\delta,\Delta)$ with respect to the elements of $\delta^{n}_l$ on the search space. The projection assures that the norm constraint on $\delta^{n}_l(\mathsf{t})$ is fulfilled for all $\mathsf{t}$.
		\end{remark}

		To use the Newton-bisection method in STEP 2 of the algorithm described in Section~\ref{sec:numerical_algorithm}, one requires both the $(\hat{\delta},\epsilon)$-pseudo-spectral abscissa and its derivative with respect to $\epsilon$. The latter can be obtained cheaply from the optimizers of optimisation problem \eqref{eq:maximisation_problem_step3}: let $\delta^{\star}$ and $\Delta^{\star} = \epsilon u^{\star} {v^{\star}}^{H}$ be the maximizers of optimisation problem \eqref{eq:maximisation_problem_step3} and if $\lambda^{\star} = \lambda_{RM}(\delta^{\star},\Delta^{\star})$ is simple with corresponding left and right eigenvectors $\phi^{\star}$ and $\psi^{\star}$, normalised such that ${\phi^{\star}}^{H}(Q+\sum_{k=1}^{K}\tilde{P}_k(\delta^{\star})\tau_{k}e^{-\tau_k \lambda^{\star}})\psi^{\star}$ is real and positive, then
		\[
		\dfrac{d\alpha^{\mathrm{ps}}(\hat{\delta},\epsilon,\vec{\tau})}{d\epsilon} = \dfrac{\Re\left({\phi^{\star}}^{H}R u^{\star} {v^{\star}}^{H} S\psi^{\star}\right)}{{\phi^{\star}}^{H}(Q+\sum_{k=1}^{K}\tilde{P}_k(\delta^{\star})\tau_{k}e^{-\tau_k \lambda^{\star}})\psi^{\star}}\text{,}
		\] 
	 see \cite{Borgioli2019}.

\end{appendices}
	\bibliographystyle{unsrt}
	\bibliography{reference.bib}
\end{document}